\documentclass[10pts]{amsart}
\usepackage{ amssymb,latexsym, amscd,pb-diagram}
\usepackage{sseq}
\usepackage[all]{xy}
\usepackage[square, numbers]{natbib}
\usepackage{graphicx}
\usepackage{mathrsfs}
\usepackage{color}
\usepackage{amsmath}
\usepackage{enumitem}
\usepackage{tikz}
\usepackage{dsfont}
\usetikzlibrary{matrix}



\vfuzz2pt 

 \newtheorem{theorem}{Theorem}[section]
 \newtheorem{cor}[theorem]{Corollary}
  
 \newtheorem{lemma}[theorem]{Lemma}
 \newtheorem{proposition}[theorem]{Proposition}
 \theoremstyle{definition}
 \newtheorem{definition}[theorem]{Definition}
 \theoremstyle{definition}
 \newtheorem{example}[theorem]{Example}
 \newtheorem{rem}[theorem]{Remark}
 \numberwithin{equation}{section}

\newcommand{\ben}{\begin{equation}}
\newcommand{\een}{\end{equation}}


\newcommand{\integer}{\ensuremath{{\mathbb Z}}}

\newcommand{\real}{\ensuremath{{\mathbb R}}}
\newcommand{\complex}{\ensuremath{{\mathbb C}}}

\newcommand{\Tt}{\ensuremath{{\mathbb T}}}
\newcommand{\KU}{\ensuremath{{\mathbb{KU}}}}



\newcommand{\KK}{{\mathcal K}}
\newcommand{\id}{{\text{id}}}

\newcommand{\FF}{{\mathcal F}}

\newcommand{\HH}{\mathcal{H}}

\newcommand{\Hom}{\mathrm{Hom}}

\newcommand{\Maps}{\mathrm{Maps}}

\newcommand{\To}{\longrightarrow}

\newcommand{\gr}{\mathfrak}


\newcommand{\Aut}{\ensuremath{{\mathrm{Aut}}}}


\def\va{\varepsilon}


\newcommand{\ot}{{\otimes}}

\newcommand{\C}{{\mathbb C}}

\newcounter{commentcounter}


\begin{document}

\title[Multiplicative Structures on Twisted Equivariant K-theory]{Multiplicative Structures on the Twisted Equivariant K-theory of Finite Groups}

\thanks{C.G would like to thank the hospitality of the Mathematics Department at MIT where part of this work was carried out. C.G. was partially supported by Vicerrector\'ia de Investigaciones de la Universidad de los Andes. I.G acknowledges and thanks the financial support of the Deutscher Akademischer Austausch Dienst.  B.U. acknowledges and thanks the financial support of the Alexander Von Humboldt Foundation.}
\author{C\'esar Galindo}
\address{Departamento de Matem\'{a}ticas, Universidad de los Andes,
Carrera 1 N. 18A - 10, Bogot\'a, Colombia.  }
\email{
cn.galindo1116@uniandes.edu.co, cesarneyit@gmail.com}
\author{Ismael Guti\'errez}
\address{Departamento de Matem\'{a}ticas, Universidad del Norte, Km.5 V\'ia Puerto Colombia, 
Barranquilla, Colombia.   }
\email{
isgutier@uninorte.edu.co}
\author{Bernardo Uribe}
\address{Departamento de Matem\'{a}ticas, Universidad del Norte, Km.5 V\'ia Puerto Colombia, 
Barranquilla, Colombia.   }
\email{buribe@gmail.com}
\subjclass[2010]{
(primary) 19L50, 18D10, (secondary) 16L35, 20J06}
\keywords{Twisted Equivariant K-theory, Twisted Drinfeld Double, Fusion Category, Multiplicative Structure, Fusion Algebra.}
\begin{abstract}
Let $K$ be a finite group and let $G$ be a finite group acting on $K$ by automorphisms. In this paper we study two
different but intimately related subjects: on the one side we classify all possible multiplicative and associative structures with which
one can endow the twisted $G$-equivariant K-theory of $K$, and on the other, we classify all possible monoidal structures with which one can
endow the category of twisted and $G$-equivariant bundles over $K$. We achieve this classification by encoding the relevant information
in the cochains of a sub double complex of the  double bar resolution associated to the semi-direct product $K \rtimes G$; we use
known calculations of the cohomology of $K$, $G$ and $K \rtimes G$ to produce concrete examples of our classification.

In the case in which $K=G$ and $G$ acts by conjugation, the multiplication map $G \rtimes G \to G$ is a homomorphism of groups and we define a shuffle homomorphism which realizes this map at the homological level. We show that  the categorical information that defines the Twisted Drinfeld Double can be realized as the dual of the shuffle homomorphism applied to any 3-cocycle of $G$. We use the pullback of the multiplication map in cohomology to classify the possible ring structures that the Grothendieck ring of representations of the Twisted Drinfeld Double may have, and we include concrete examples of this procedure.

\end{abstract}

\maketitle

\section*{Introduction}

The purpose of this work is to investigate a relationship existing among certain tensor categories attached to a semi-direct product of groups (they include the Twisted Drinfeld Double of a discrete group), their fusion algebras and the cohomology groups of the  semi-direct product. These tensor categories, as well as some of our results, are closely related to abelian extensions of Hopf algebras and the cohomological description of ${\rm{Opext}}(\mathbb{C}^G,\mathbb{C}F)$) given by Kac in \cite{Kac-Origonal}, cf. \cite{Masuoka-survey}.

The abelian extension theory of Hopf algebras was generalized to coquasi-Hopf algebras by Masuoka in \cite{Masuoka-Adv}. Some of our results and constructions can be framed in the abelian extension theory of coquasi-Hopf algebras in the particular case where  the matched pair of groups is a semi-direct product. However, our approach to these tensor categories does not follow Masuoka's point of view, instead  we use the concept of pseudomonoids in a suitable 2-monoidal 2-category associated to a group. There are some reasons why we prefer to use the pseudomonoid approach:  First, it exhibits more clearly the relationship between the cohomology of semi-direct products and the multiplicative structures of the $G$-equivariant twisted $K$-theory of a finite group $K$. Second, some terms of a spectral sequence associated to the double complex have a direct interpretation in terms of obstructions and classification of the possible pseudomonoid structures. Third, some of our constructions and results make sense in categories different from the category of sets; in particular, if we change to the Cartesian categories of locally compact topological spaces and change discrete group cohomology to the Borel-Moore cohomology \cite{BM-cohomology}, we have a more general theory where the categories of coquasi-Hopf algebras may not capture all the desired information.

There are two main reasons for our interest in the fusion algebra of the tensor categories defined in this paper. On the one side, semisimple tensor categories can be encoded in a combinatorial structure divided in two parts: the fusion algebra (or the Gorthendick ring of the tensor category) and some non-abelian cohomological information provided by the $F$-matrices of the 6$j$-symbols (see \cite{Turaev-Book}).
And on the other,  these fusion algebras are generalizations of  $^\omega \text{K}([G/G])$, the  $w$-twisted stringy $K$-theory of the groupoid $[G/G]$ (see \cite{Willerton}).

In the case that the semi-direct product is finite, the associated tensor categories are fusion categories, and they belong to a bigger family of fusion categories called group-theoretical fusion categories for which many interesting  results have been established cf. \cite{ENO,Ost}. Since an explicit description of the fusion rules  of the tensor categories studied in this paper, via induction and restriction of projective representations of certain subgroups of $G$ already appear in \cite[Theorem 4.8]{Wither}, our approach focuses in determining the number of fusion category and fusion algebra structures associated to a fixed semi-direct product.
We accomplish this task in several steps. First, we show that the information encoding a pseudomonoid with strict unit in the 2-category of $G$-sets with twists over the group $K$ is equivalent to a 3-cocycle in $Z^3({\rm{Tot}}^*(A^{*,*}(K \rtimes G, \Tt)))$, where the double complex   $A^{*,*}(K \rtimes G, \Tt)$ is the sub double complex without the 0-th row of the double bar resolution $C^{*,*}(K \rtimes G, \Tt)$ and whose 
total cohomology calculates the cohomology of $K \rtimes G$. Second, we show that the information encoded in a
pseudomonoid with strict unit in the 2-category of $G$-sets with twists $\mathcal{K}$, is precisely the information required
to endow the category $Bun_G(\mathcal{K})$ of projective $G$-equivariant complex vector bundles with a monoidal structure; hence
the isomorphism classes of bundles ${\rm{Groth}}(Bun_G(\mathcal{K}))$ becomes a fusion algebra, and this fusion algebra structure
could be alternatively understood as a  twisted $G$-equivariant K-theory ring of  $K$. Third, we construct the twisted $G$-equivariant K-theory of the group $K$ and we show the conditions under which this group could be endowed with a multiplicative structure making it a ring; we define the  group of multiplicative structures by $MS_G(K)$ and we show that this group could be calculated by the use  of a spectral sequence associated to 
the complex ${\rm{Tot}}^*(A^{*,*}(K \rtimes G, \Tt))$. We study the canonical homomorphism
$H^3({\rm{Tot}}^*(A^{*,*}(K \rtimes G, \Tt))) \stackrel{\phi}{\to} MS_G(K)$ and we give an explicit description of its kernel and its cokernel;
a multiplicative structure on $MS_G(K)$ not appearing in the image of $\phi$, endows the twisted $G$-equivariant K-theory of $K$ with 
a ring structure which is not the fusion algebra of any tensor category of the form $Bun_G(\mathcal{K})$, in other words,it is an algebra structure which is not possible to categorify. 

Of particular interest is the case in which $G=K$ and $G$ acts on itself by conjugation. In this case, the cohomological information which was used in \cite{Dijkgraaf} to define the Twisted Drinfeld Double $D^w(G)$ for $w \in Z^3(G,\Tt)$ defines an element in 
$Z^3({\rm{Tot}}^*(A^{*,*}(G \rtimes G, \Tt)))$. The formul\ae  \ defining this 3-cocycle were reminiscent of the formul\ae \ appearing
in \cite[Theorem 5.2]{Eilenberg-MacLane} on the proof of the Eilenberg-Zilber theorem, and we conjectured that there had to exist a way to
define for any cocycle in $Z^n(G,\Tt)$ a $n$-cocycle in $Z^n({\rm{Tot}}^*(A^{*,*}(G \rtimes G, \Tt)))$ having similar properties as the ones defined for $n=3$.  We show in this paper that indeed this is the case and its proof is based on two facts: first that the multiplication map
$\mu : G \rtimes G \to G$, $\mu(k,g)=kg$ is a homomorphism of groups, and second, on a construction of an  explicit {\it{Shuffle homomorphism}} at the chain level, whose dual $\tau^\vee: C^*(G,\Tt) \to {\rm{Tot}}^*(C^{*,*}(G \rtimes G, \Tt))$
applied to $w$ recovers the cocycle defined in \cite{Dijkgraaf}, and moreover that in cohomology equals the pullback of $\mu$, e.i.
$\mu^*=\tau^* : H^*(G,\Tt) \to H^*(G \rtimes G,\Tt).$ Since the group $G \rtimes G$ is isomorphic to the product $G\times G$
we get that the map  $H^3({\rm{Tot}}^*(A^{*,*}(G \rtimes G, \Tt))) \stackrel{\phi}{\to} MS_G(G)$ is surjective,  and since we know
that  the cohomology class of the 3-cocycle that is defined in \cite{Dijkgraaf} could be recovered from the cohomology class of $\mu^* w$,
 we give a simple procedure to determine the fusion algebras of ${\rm{Rep}}(D^w(G))$ which are isomorphic to the $G$-equivariant K-theory ring $\KU_G(G)$; at the end of this work this procedure is exemplified in some  interesting cases.

This paper is organized as follows. In Section \ref{Section_preliminaries} we provide background material on the semi-direct products and the double bar complex associated to its cohomology. In Section \ref{Section: The case of the trivializable semi-direct product.} the Shuffle homomorphism of a trivializable semi-direct product is defined and some of its properties are shown. In Section \ref{section:Definiciones categoricas} the 2-monoidal 2-category of twisted $G$-sets with strict unit and the 2-category  of pseudomonoids in this 2-category are defined and described using the complex $\text{Tot}^*(A^{*,*}(K\rtimes G,\Tt))$. In Section \ref{Section:The monoidal category of equivariant vector bundles on a pseudomonoid} the tensor category of equivariant vector bundles over a group, tensor functors  and monoidal natural isomorphism  associated to the 2-category of pseudomonoids in the 2-category of twisted $G$-sets with strict unit are defined. In Section \ref{Section:twisted $G$-equivariant K-theory ring} the obstruction to the existence of multiplicative structures over the twisted $G$-equivariant K-theory of a group $K$ is described using the spectral sequence associated to the filtration $F^r:=A^{*,*> r}$ of the double complex $A^{*,*}$. In Section \ref{Section: Examples} several concrete examples are completely calculated. We finish with an appendix in Section \ref{Appendix: Relation with (coquasi) bialgebras} in which we give the explicit relation of our tensor categories with  coquasi-bialgebras.

\section{Preliminaries} \label{Section_preliminaries}

\subsection{Semi-direct products} Let $K$ be a discrete group endowed with an action of the discrete
group $G$ defined through a homomorphism $\rho: G \to
{\rm{Aut}}(K)$; for simplicity, for $g \in G$ and $k \in
K$ denote the action by $g (k) : = \rho(g)(k)$.  Denote by $K
\rtimes G$ the group defined by the semi-direct product of $G$ with
$K$; as a set $K
\rtimes G: = K \times G $ and the product structure is defined by $(a,g)(b,h) : =
(a \ g( b), gh)$.

The group $K \rtimes G$ fits in the short exact sequence
\begin{equation}\label{sucesion semidirect}
1 \to K \to K \rtimes G \stackrel{\pi_2}{\to} G \to 1
\end{equation}
and we say that $K \rtimes G$ is isomorphic to another split extension
$$ 1 \to K \to E \stackrel{p}{\to} G \to 1$$
whenever there is an isomorphism $\psi: K \rtimes G \stackrel{\cong}{\to} E$ such that $\pi_2 = p \circ \psi$.

 Recall that ${\rm{Inn}}(K)$ is the group of inner automorphisms of the group $K$, i.e. the automorphisms of $K$ induced by conjugation, and that it fits in the short exact sequences
$$1 \to Z(K) \to K \stackrel{\tau}{\to}  {\rm{Inn}}(K) \to 1 $$
$$1 \to {\rm{Inn}}(K) \to {\rm{Aut}}(K) \to  {\rm{Out}}(K) \to 1 $$
where $Z(K)$ denotes the center of $K$ and ${\rm{Out}}(K)$ denotes the group of outer automorphisms of $K$.

\begin{proposition} \label{pro:trivial_semi-direct_product_KxG}
The exact sequence \eqref{sucesion semidirect} of the semi-direct product $K \rtimes G$ is isomorphic to the trivial exact sequence of the product $K \times G$ if and only if the image of $\rho$  is in ${\rm{Inn}}(K)$, and  there is a homomorphism $\sigma : G\to K$ such that the following diagram commutes
$$\xymatrix{
G \ar[dr]_\rho \ar[rr]^\sigma  && K \ar[dl]^\tau\\
& {\rm{Inn}}(K) &
}
$$	where the action of $G$ on $K$ is given by the inner automorphisms defined by $\rho$.​
\end{proposition}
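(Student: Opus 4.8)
The plan is to reformulate triviality of the split extension \eqref{sucesion semidirect} as the existence of a homomorphic section of $\pi_2$ whose image centralizes the normal copy of $K$, and then to identify the homomorphism $\sigma$ (up to an inversion) with such a section. Throughout I will use the convention $\tau(k)(x)=kxk^{-1}$ for the inner automorphism attached to $k\in K$; the opposite convention only moves some inverses around.

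For the ``if'' direction I would argue by an explicit construction. Suppose $\sigma:G\to K$ is a homomorphism with $\tau\circ\sigma=\rho$, so that $g(x)=\sigma(g)\,x\,\sigma(g)^{-1}$ for all $g\in G$ and $x\in K$, and set
\[
\psi:K\times G\longrightarrow K\rtimes G,\qquad \psi(a,g):=\bigl(a\,\sigma(g)^{-1},\,g\bigr).
\]
The verification that $\psi$ is a group homomorphism is routine: expanding $\psi\bigl((a,g)(b,h)\bigr)$ and $\psi(a,g)\psi(b,h)$ and using $g(b)=\sigma(g)b\sigma(g)^{-1}$ together with $\sigma(gh)=\sigma(g)\sigma(h)$, both sides come out to $\bigl(ab\,\sigma(h)^{-1}\sigma(g)^{-1},\,gh\bigr)$. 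Since $\psi$ is visibly bijective (the inverse is $(a,g)\mapsto(a\sigma(g),g)$), restricts to the identity on $K\times\{1\}$ because $\sigma(1)=1$, and satisfies $\pi_2\circ\psi=p$ for $p$ the projection onto $G$, it is an isomorphism of split extensions between the trivial extension of $K\times G$ and \eqref{sucesion semidirect}; as that relation is symmetric, this direction is complete.

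For the ``only if'' direction, suppose $\psi:K\rtimes G\xrightarrow{\ \cong\ }K\times G$ is an isomorphism with $p\circ\psi=\pi_2$. Then $\psi$ carries $\ker\pi_2=K\times\{1\}$ isomorphically onto $\ker p=K\times\{1\}$, so $s(g):=\psi^{-1}(1,g)$ is a homomorphic section of $\pi_2$ whose image $s(G)=\psi^{-1}(\{1\}\times G)$ centralizes $K\times\{1\}$ inside $K\rtimes G$, because $\{1\}\times G$ centralizes $K\times\{1\}$ in the direct product and $\psi^{-1}$ sends these two subgroups to $s(G)$ and $K\times\{1\}$. Writing $s(g)=(\sigma_0(g),g)$, being a section gives the cocycle identity $\sigma_0(gh)=\sigma_0(g)\,g(\sigma_0(h))$, while the centralizing condition $(\sigma_0(g),g)(a,1)=(a,1)(\sigma_0(g),g)$ forces $g(a)=\sigma_0(g)^{-1}a\,\sigma_0(g)$, i.e. $\rho(g)=\tau(\sigma_0(g)^{-1})$; in particular $\mathrm{im}\,\rho\subseteq\Inn(K)$. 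Substituting this last identity back into the cocycle identity collapses it to $\sigma_0(gh)=\sigma_0(h)\sigma_0(g)$, so $\sigma(g):=\sigma_0(g)^{-1}$ is an honest homomorphism $G\to K$, and $\tau(\sigma(g))=\tau(\sigma_0(g)^{-1})=\rho(g)$, which is exactly the commuting triangle in the statement.

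I do not expect a conceptual obstacle here; the points that need care are bookkeeping ones — keeping the inverses straight so that it is $\sigma$, and not $g\mapsto\sigma(g)^{-1}$, that makes the triangle commute (this depends on the convention fixed for $\tau$), and checking that the maps in play are genuinely isomorphisms of split extensions. On the latter, I would observe that compatibility with $\pi_2$ by itself already forces compatibility with the inclusion of $K$ up to an automorphism of $K$, and that automorphism plays no role in the argument, so the notion of equivalence recorded before the proposition is enough. No finiteness of $K$ or $G$ is used anywhere.
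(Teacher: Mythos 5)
Your proof is correct, and since the paper states this proposition without printing a proof, there is nothing to compare it against beyond consistency with the paper's conventions: the Example and Remark that follow the proposition (where $\mu(a,g)=a\sigma(g)$ is asserted to be a homomorphism) confirm the convention $\tau(k)(x)=kxk^{-1}$, under which your bookkeeping is right — the centralizing condition gives $\rho(g)=\tau(\sigma_0(g)^{-1})$, $\sigma_0$ is an anti-homomorphism, and $\sigma(g)=\sigma_0(g)^{-1}$ is the homomorphism making the triangle commute. Both directions check out: the explicit map $\psi(a,g)=(a\sigma(g)^{-1},g)$ (equivalently its inverse $(a,g)\mapsto(a\sigma(g),g)$, which is the map $\mu\times\pi_2$ implicit in the paper's Example) is an isomorphism compatible with the projections, and conversely a homomorphic section of $\pi_2$ whose image centralizes the kernel copy of $K$ yields $\sigma$ exactly as you argue; your closing observation that the paper's notion of isomorphism (compatibility with $\pi_2$ only) suffices is also accurate.
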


\begin{example}
Let $K=G$ and consider the conjugation action of $K$ on itself. In this case $\rho=\tau$ and we can take  $\sigma= {\rm{Id}}_K$. Therefore the map
$$\mu: K \rtimes K \to K, \hspace{0.5cm} (a,g) \mapsto ag$$
is a homomorphism of groups and
$$\mu \times \pi_2 : K \rtimes K \to K \times K,  \hspace{0.5cm} (a,g) \mapsto (ag,g)$$
is an isomorphism.
\end{example}

\begin{rem} \label{rem:map_from_KxG_to_KxK}
Note that whenever we have a homomorphism $\sigma : G \to K$ such that $\rho = \tau \circ \sigma$ then the map
$$\mu: K \rtimes G \to K, \hspace{0.5cm}  (k,g) \mapsto k \sigma(g)$$
becomes a group homomorphism. Moreover, the map
$$K \rtimes G \to K \rtimes K, \hspace{0.5cm} (k,g) \mapsto (k,\sigma(g))$$
is a homomorphism of groups.
\end{rem}

\subsection{Bar resolution}
Let us find an explicit model for the homology of the group $K \rtimes G$. For this, let us first setup the notation for the explicit model for the bar resolution that we will use. 

Take $H$ a discrete group and define the complex $C_*(EH,\integer)$
with
$$C_n(EH, \integer) := \integer H^{\otimes n+1}$$
and with differential $\partial_H : C_{n}(EH,\integer) \to C_{n-1}(EH,\integer)$ defined by the equation on generators
$$\partial_H(h_1, h_2, ..., h_{n+1}) =(h_2, h_3, ..., h_{n+1}) + \sum_{i=1}^{n} (-1)^i (h_1, ...,
h_ih_{i+1}, ... , h_{n+1}).$$
The complex $(C_*(EH,\integer),\partial_H)$ becomes a complex in the category of $\integer H$-modules if we endow
each $C_n(EH,\integer)$ with the left $\integer H$-module structure defined by the equation
$$h \cdot (h_1,..., h_{n+1}) := (h_1,..., h_{n+1}h^{-1}).$$

The augmentation map
$$\epsilon: C_0(EH,\integer) \to \integer, \hspace{0.5cm}  \epsilon(h) = 1$$
is a map of $\integer H$-modules and the complex $C_*(EH,\integer)$ becomes a $\integer H$-free resolution of the trivial $\integer H$-module $\integer$,
$$\epsilon:C_*(EH,\integer) \to \integer.$$

The elements
$$(h_1, h_2, ..., h_n,1)$$
generate the $\integer H$-module $C_n(EH,\integer)$ and therefore we could write them using the ``bar notation"
$$[h_1|h_2|...|h_n]:=(h_1, h_2, ..., h_n,1);$$
the differential $\partial_H$ in this base becomes
\begin{align*}
\partial_H[h_1| h_2 ... |h_n] =[h_2| h_3| ...| h_n] + \sum_{i=1}^{n}  (-1)^i & [h_1| ...|
h_ih_{i+1}| ... | h_n]\\
&+(-1)^{n+1} h_n^{-1}  \cdot [h_1| ... | h_{n-1}].
\end{align*}

For a left $\integer H$-module $W$, the homology groups of $H$ with coefficients in $W$ are defined as
$$H_*(H,W):= H_*( C_*(EH,\integer) \otimes_{\integer H} W)$$
and the cohomology groups of $H$ with coefficients in $W$ as
$$H^*(H,W):= H^*( {\rm{Hom}}_{\integer H}( C_*(EH,\integer), W)).$$

Since we have a canonical isomorphism of $\integer$-modules
$${\rm{Hom}}_{\integer H}( C_n(EH,\integer), W) \cong {\rm{Maps}}(   H^n, W),$$
the cohomological differential $\delta_H$ in terms of the bar notation becomes
\begin{align*}
(\delta_H f)[h_1| h_2 ... |h_n] =f[h_2| h_3| ...| h_n] + \sum_{i=1}^{n}  (-1)^i & f[h_1| ...|
h_ih_{i+1}| ... | h_n]\\
&+(-1)^{n+1} h_n^{-1}  \cdot f[h_1| ... | h_{n-1}].
\end{align*}

\subsection{Cohomology of $K \rtimes G$} Consider the double complex
$$C_*(EG, \integer) \otimes_\integer C_*(EK,\integer)$$ with differentials $\partial_G \otimes 1$ and $1 \otimes \partial_K$.
Denote by $(g_1,...,g_{p+1}||k_1,...,k_{q+1})$ a generator in $C_p(EG, \integer) \otimes_\integer C_q(EK,\integer)$
and define the action of $(k,g) \in K \rtimes G$ by the equation
$$ (k,g) \cdot (g_1,...,g_{p+1}||k_1,...,k_{q+1}) := (g_1,...,g_{p+1}g^{-1}||g(k_1),...,g(k_{q+1}) k^{-1}).$$
A straightforward computation shows that indeed it is an action and therefore  $C_p(EG, \integer) \otimes_\integer C_q(EK,\integer)$
becomes a free $\integer( K \rtimes G)$-module. Since the differentials $\partial_G \otimes 1$ and $1 \otimes \partial_K$
are also maps of $\integer( K \rtimes G)$-modules, we could take the total complex
$${\rm{Tot}}_* (C_*(EG, \integer) \otimes_\integer C_*(EK,\integer))$$
whose degree $n$-component is
$${\rm{Tot}}_n (C_*(EG, \integer) \otimes_\integer C_*(EK,\integer)) :=  \bigoplus_{p+q=n} C_p(EG, \integer) \otimes_\integer C_q(EK,\integer)$$
and whose differential is
$$\partial_G \otimes 1 \oplus (-1)^p 1 \otimes \partial_K$$
thus obtaining a free $\integer(K \rtimes G)$ complex.
Since the homology of the total complex of this double complex is just $\integer$ in degree 0,
$$H_*( {\rm{Tot}}_* (C_*(EG, \integer) \otimes_\integer C_*(EK,\integer)), \partial_G \otimes 1 \oplus (-1)^p 1 \otimes \partial_K)= \integer,$$
we have that 
$${\rm{Tot}}_* (C_*(EG, \integer) \otimes_\integer C_*(EK,\integer)) \stackrel{\epsilon \otimes \epsilon}{\To} \integer$$
is a free $\integer(K \rtimes G)$ resolution of $\integer$.

Making use of the bar notation we take the elements
$$[g_1|...|g_p||k_1|...|k_q]:= (g_1,...,g_p,1||k_1,...,k_q,1)$$
as a set of generators of $C_p(EG, \integer) \otimes_\integer C_q(EK,\integer)$ as a $\integer(K \rtimes G)$-module;
in this base we have the equality
$$(g_1,...,g_p,g||k_1,...,k_q,k) = (k^{-1},g^{-1})\cdot [g_1|...|g_p||g(k_1)|...|g(k_q)].$$

This choice of base provides us with an isomorphism of $\integer$-modules
$${\rm{Hom}}_{\integer(K \rtimes G)}(C_p(EG, \integer) \otimes_\integer C_q(EK,\integer), \integer) \cong {\rm{Maps}}(G^p \times K^q, \integer)$$
that allows us to transport the dual of the differentials $\partial_G \otimes 1$ and $1 \otimes \partial_K$ to ${\rm{Maps}}(G^p \times K^q, \integer)$; the induced differentials will be denote by $\delta_G$ and $\delta_K$. Therefore we obtain:

\begin{definition} \label{definition double complex}
Let $\overline{C}^{p,q}(K \rtimes G, \integer)$, $p,q \geq 0$,  be
the double complex
$$\overline{C}^{p,q}(K \rtimes G, \integer):= \Maps(G^p \times K^q, \integer)$$
with differentials $\delta_G : \overline{C}^{p,q} \to
\overline{C}^{p+1,q}$ and $\delta_K : \overline{C}^{p,q} \to
\overline{C}^{p,q+1}$  defined by the equations
\begin{align*}(\delta_GF)[g_1|...| g_{p+1} || k_1|...|k_q]
= F[g_2|&...|g_{p+1}|| k_1|...|k_q]\\ & + \sum_{i=1}^{p}(-1)^i F[g_1|...|g_ig_{i+1}|
..|g_{p+1}|| k_1|...|k_q ]\\ &+ (-1)^{p+1} F[g_1|...|g_p||
g_{p+1}(k_1)|...|g_{p+1}(k_q) ]  \\
(\delta_KF)[g_1|...|g_p|| k_1| ...| k_{q+1} ]  = 
F[g_1|&...|g_{p}|| k_2 |...| k_{q+1}] \\ &+
\sum_{j=1}^{q}(-1)^j F[g_1|...|g_p|| k_1| ...|k_jk_{j+1}|...| k_{q+1} ]\\
&+ (-1)^{q+1} F[g_1|...|g_p|| k_1| ...| k_{q}],
\end{align*}
where by convention
$G^0 \times K^0 $ is the set with one point.
\end{definition}
Hence we have
\begin{lemma}
The cohomology of the total complex of the double complex
$\overline{C}^{*,*}(K \rtimes G, \integer), \delta_G, \delta_K$ is
the cohomology of the group $K \rtimes G$, i.e.
$$H^* \left( {\rm{Tot}}^*(\overline{C}^{*,*}(K \rtimes G, \integer)),
\delta_G \oplus (-1)^p \delta_K \right) \cong H^*(K \rtimes G,
\integer).$$
\end{lemma}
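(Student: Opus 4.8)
The plan is to identify the double complex $\overline{C}^{*,*}(K\rtimes G,\integer)$ with the $\integer$-dual of the double complex of free $\integer(K\rtimes G)$-modules $C_*(EG,\integer)\otimes_\integer C_*(EK,\integer)$, and then invoke the standard fact that the total complex of a first-quadrant double complex of modules computes, via hyperhomology, the homology of the associated total complex whenever one is working over a flat base — here, $\integer$. Concretely, the argument is essentially bookkeeping built on two inputs already set up in the excerpt.

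First I would recall what has already been established just above the statement: the complex $\mathrm{Tot}_*(C_*(EG,\integer)\otimes_\integer C_*(EK,\integer))$ with the sign-twisted differential $\partial_G\otimes 1\oplus(-1)^p 1\otimes\partial_K$ is a complex of \emph{free} $\integer(K\rtimes G)$-modules, and its homology is $\integer$ concentrated in degree $0$; hence it is a free resolution of the trivial module $\integer$. Therefore, by the definition of group cohomology recalled in the ``Bar resolution'' subsection, $H^*(K\rtimes G,\integer)$ can be computed as the cohomology of $\Hom_{\integer(K\rtimes G)}(\mathrm{Tot}_*(C_*(EG,\integer)\otimes_\integer C_*(EK,\integer)),\integer)$, since any free (hence projective) resolution may be used. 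The second input is the $\integer$-module isomorphism displayed in the excerpt,
$$\Hom_{\integer(K\rtimes G)}(C_p(EG,\integer)\otimes_\integer C_q(EK,\integer),\integer)\cong \Maps(G^p\times K^q,\integer),$$
which is exactly $\overline{C}^{p,q}(K\rtimes G,\integer)$ by Definition \ref{definition double complex}, and which was engineered (via the chosen $\integer(K\rtimes G)$-basis $[g_1|\dots|g_p||k_1|\dots|k_q]$) precisely so that the dual differentials become $\delta_G$ and $\delta_K$.

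The key steps, in order, are: (1) observe $\Hom_{\integer(K\rtimes G)}(-,\integer)$ is additive, so it carries the direct sum $\bigoplus_{p+q=n}C_p(EG,\integer)\otimes_\integer C_q(EK,\integer)$ in degree $n$ to $\bigoplus_{p+q=n}\Maps(G^p\times K^q,\integer)=\mathrm{Tot}^n(\overline{C}^{*,*}(K\rtimes G,\integer))$; (2) check that under the basis-induced isomorphism the dual of $\partial_G\otimes 1$ is $\delta_G$ and the dual of $1\otimes\partial_K$ is $\delta_K$ — this is a direct transcription of the differential formulas, and is the content already asserted when $\delta_G,\delta_K$ were introduced — so the dual of the total differential $\partial_G\otimes 1\oplus(-1)^p 1\otimes\partial_K$ is $\delta_G\oplus(-1)^p\delta_K$ (the sign placement is consistent because on the summand with fixed $p$ the dual of $(-1)^p 1\otimes\partial_K$ is $(-1)^p\delta_K$); (3) conclude that $\bigl(\mathrm{Tot}^*(\overline{C}^{*,*}(K\rtimes G,\integer)),\delta_G\oplus(-1)^p\delta_K\bigr)$ is isomorphic as a cochain complex to $\Hom_{\integer(K\rtimes G)}\bigl(\mathrm{Tot}_*(C_*(EG,\integer)\otimes_\integer C_*(EK,\integer)),\integer\bigr)$, and hence its cohomology is $H^*(K\rtimes G,\integer)$ by step (1)'s identification together with the free-resolution property.

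I expect the only real subtlety — and hence the ``main obstacle,'' though it is mild — to be the sign conventions: one must make sure the $(-1)^p$ twist appearing in the total differential of the resolution dualizes to exactly the $(-1)^p$ twist in the statement, and that the vertical differential $1\otimes\partial_K$ (which in bar notation acquires its own internal signs) dualizes to the $\delta_K$ of Definition \ref{definition double complex} on the nose rather than up to a global sign. Since a global sign on a differential does not change the cohomology, even a sign discrepancy would not affect the conclusion, but for a clean statement I would verify the formulas agree termwise by comparing $\partial_H$ in bar notation (displayed in the ``Bar resolution'' subsection) against $\delta_G$ and $\delta_K$ in Definition \ref{definition double complex}, noting in particular how the left $\integer(K\rtimes G)$-action $(k,g)\cdot[\cdots]$ on the resolution produces the ``action'' terms $g_{p+1}(k_j)$ and the final $(-1)^{q+1}$ term upon dualizing. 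Everything else is a formal consequence of: a free resolution computes group cohomology, $\Hom$ is additive, and taking the total complex commutes with $\integer$-linear dualization.
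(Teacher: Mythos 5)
Your proof is correct and follows essentially the same route as the paper: the lemma there is stated as an immediate consequence of the preceding construction, namely that ${\rm{Tot}}_*(C_*(EG,\integer)\otimes_\integer C_*(EK,\integer))$ is a free $\integer(K\rtimes G)$-resolution of $\integer$ and that applying $\Hom_{\integer(K\rtimes G)}(-,\integer)$, via the chosen basis $[g_1|\dots|g_p||k_1|\dots|k_q]$, transports the dual differentials to $\delta_G$ and $\delta_K$ of Definition \ref{definition double complex}. Your added care with the $(-1)^p$ sign and the additivity of $\Hom$ is exactly the bookkeeping the paper leaves implicit.
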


We can take a smaller double complex, more suited for our work,
which is called the normalized double complex. Let us define it
\begin{definition} \label{definition normalized double complex for
GxK} The normalized double complex of $\overline{C}^{*,*}(K
\rtimes G, \integer), \delta_G, \delta_K$ is the double complex
$${C}^{p,q}(K \rtimes G,
\integer) \subset \overline{C}^{p,q}(K \rtimes G, \integer) $$
consisting of maps $F : G^p \times K^q \to \integer$ such that
$F[g_1|...|g_p|| k_1|..|k_q]= 1$  whenever  $g_i=1$ or $k_j=1$. The
differentials on $C^{p,q}$ are also $\delta_G$ and $\delta_K$. We
setup $C^{0,0}=\integer$.
\end{definition}
It is  known in homological algebra that the normalized
complex of the bar resolution is quasi-isomorphic to the bar
resolution (see page 215 in \cite{Hilton}). Therefore we have
\begin{lemma}
The induced map on total complexes
$${\rm{Tot}}^*({C}^{*,*}(K \rtimes G, \integer)) \to {\rm{Tot}}^*(\overline{C}^{*,*}(K \rtimes G,
\integer))$$ is a quasi-isomorphism. Then
$$H^* \left( {\rm{Tot}}^*({C}^{*,*}(K \rtimes G, \integer)),
\delta_G \oplus (-1)^p \delta_K \right) \cong H^*(K \rtimes G,
\integer).$$
\end{lemma}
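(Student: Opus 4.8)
The plan is to establish the first assertion --- that the inclusion induces a quasi-isomorphism on total complexes --- and then to obtain the cohomology isomorphism at once by composing with the preceding Lemma. The only non-formal ingredient I will use is the classical normalization theorem for the bar complex of a single group: for any discrete group $H$ and any $\integer H$-module $M$, the inclusion of the normalized bar cochain complex into the unnormalized one, $N^{*}(H,M)\hookrightarrow\overline{C}^{*}(H,M)$, is a homotopy equivalence (page 215 in \cite{Hilton}), hence a quasi-isomorphism for every coefficient module. Everything else is bookkeeping with first-quadrant double complexes, and I will carry out the normalization one variable at a time.

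First I would introduce the intermediate, ``half-normalized'' double complex ${C'}^{p,q}(K\rtimes G,\integer)\subseteq\overline{C}^{p,q}(K\rtimes G,\integer)$ consisting of the maps $F\colon G^{p}\times K^{q}\to\integer$ that vanish whenever some $k_{j}=1$, with no condition imposed on the $g_{i}$. Inspecting the formulas in Definition~\ref{definition double complex}, ${C'}^{*,*}$ is a sub-double-complex: $\delta_{K}$ preserves the vanishing condition by the usual pairwise cancellation of the two surviving terms, and $\delta_{G}$ preserves it because $g_{p+1}(1)=1$ for the automorphism $g_{p+1}$. The same checks show that the normalized double complex $C^{*,*}$ of Definition~\ref{definition normalized double complex for GxK} is a sub-double-complex of ${C'}^{*,*}$, so we get a factorization $C^{*,*}\hookrightarrow {C'}^{*,*}\hookrightarrow\overline{C}^{*,*}$ of maps of double complexes, and it suffices to prove that each inclusion is a quasi-isomorphism on total complexes.

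For the first inclusion I would use the spectral sequences of the filtration by the column degree $p$. Since $\delta_{K}$ does not touch the $G$-variables, the $p$-th column $(\overline{C}^{p,*},\delta_{K})$ is a product of $G^{p}$-many copies of the unnormalized bar cochain complex $\overline{C}^{*}(K,\integer)$ of $K$ with trivial coefficients, and $({C'}^{p,*},\delta_{K})$ is the corresponding product of copies of the normalized complex $N^{*}(K,\integer)$; hence on the $E_{1}$-page the comparison map is $\Maps\!\big(G^{p},H^{q}(N^{*}(K,\integer))\big)\to\Maps\!\big(G^{p},H^{q}(\overline{C}^{*}(K,\integer))\big)$, which is an isomorphism by the normalization theorem. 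Both double complexes live in the first quadrant, so both spectral sequences converge, and the comparison theorem for spectral sequences gives that ${\rm Tot}^{*}({C'}^{*,*})\to{\rm Tot}^{*}(\overline{C}^{*,*})$ is a quasi-isomorphism. For the second inclusion the column filtration no longer works (the residual $G$-normalization is invisible on $E_{1}$), so I would instead filter by the row degree $q$. Reading the $\delta_{G}$-formula, the $q$-th row $({C'}^{*,q},\delta_{G})$ is exactly the unnormalized bar cochain complex $\overline{C}^{*}(G,N_{q})$ of $G$ with coefficients in the $G$-module $N_{q}:=N^{q}(K,\integer)$, where $g$ acts by $(g\cdot\phi)(k_{1},\dots,k_{q})=\phi\big(g^{-1}(k_{1}),\dots,g^{-1}(k_{q})\big)$, and under the same identification $(C^{*,q},\delta_{G})$ becomes the normalized complex $N^{*}(G,N_{q})$. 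By the normalization theorem with coefficients in $N_{q}$ the comparison map is a quasi-isomorphism on every row; both double complexes are first-quadrant, so the comparison theorem gives that ${\rm Tot}^{*}(C^{*,*})\to{\rm Tot}^{*}({C'}^{*,*})$ is a quasi-isomorphism.

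Composing the two steps, ${\rm Tot}^{*}(C^{*,*}(K\rtimes G,\integer))\to{\rm Tot}^{*}(\overline{C}^{*,*}(K\rtimes G,\integer))$ is a quasi-isomorphism, and then $H^{*}({\rm Tot}^{*}(C^{*,*}(K\rtimes G,\integer)))\cong H^{*}(K\rtimes G,\integer)$ follows by composing with the isomorphism of the preceding Lemma. I do not expect a serious obstacle: the only points requiring care are checking that the half-normalized complex is genuinely closed under $\delta_{G}$ (this is exactly where $g(1)=1$ enters) and recognizing each filtration stratum as an honest one-variable bar cochain complex with the coefficients indicated; the cancellation computations involved are the standard ones behind the classical normalization theorem and need no new idea.
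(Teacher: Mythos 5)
Your argument is correct and rests on exactly the same non-formal input as the paper, namely the classical normalization theorem for the bar complex (the very citation to page 215 of Hilton--Stammbach that the paper uses); the paper simply invokes that theorem and leaves the passage from one-variable normalization to the double complex implicit, whereas you carry it out explicitly via the half-normalized intermediate complex and the two first-quadrant spectral-sequence comparisons. So this is essentially the paper's proof with the routine bookkeeping filled in, and the details you supply (closure of the half-normalized complex under $\delta_G$, identification of columns and rows as one-variable bar cochain complexes with the indicated coefficients) are all correct.
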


We can generalize our definition of the double complex to other
coefficients. Denote by $\Tt$ the group $S^1$ and consider the
exact sequence of coefficients
$$ 0 \To \integer \To \real \To \Tt \To 0.$$

We will denote the complex $C^{p,q}(K \rtimes G, \Tt)$ as the
complex generated by maps $F:G^p \times K^q \to \Tt$ on which $F$ is
$1$ if one of the entries is the identity, and the differentials
$\delta_G$ and $\delta_K$ are the same ones as Definition
\ref{definition double complex} but changing the sums by
multiplications. Note also that we set up $C^{0,0}(K \rtimes G,
\Tt):= \Tt$.

Then
$$H^* \left( {\rm{Tot}}^*({C}^{*,*}(K \rtimes G, \Tt)),
\delta_G  \oplus \delta_K^{(-1)^p} \right) \cong H^*(K \rtimes
G, \Tt).$$

\subsubsection{Decomposition of the cohomology of $K \rtimes G$}
Note that the 0-th row of the double complex is isomorphic to the normalized bar cochain complex of $G$
$$ (C^{*,0}(K \rtimes G, \integer), \delta_G) \cong (C^*(G,\integer),\delta_G),$$ 
and the action of the differential $\delta_K$ on this row is trivial. Therefore, if we define
\begin{definition} \label{def:complex_A} Let
$$A^{*,*}(K \rtimes G , \integer) := C^{*,*>0}(K \rtimes G , \integer)$$
be  the sub double complex of $C^{*,*}(K \rtimes G , \integer)$ with trivial  0-th row, and differentials $\delta_G$ and $\delta_K$.
\end{definition}
Then we obtain
\begin{lemma} \label{lem:splitting_of_C**_in_A**_and_G} There is a canonical isomorphism of double complexes
$$(C^{*,*}(K \rtimes G , \integer), \delta_G \oplus (-1)^p\delta_K) \cong (A^{*,*}(K \rtimes G , \integer), \delta_G \oplus (-1)^p\delta_K) \oplus (C^*(G,\integer),\delta_G),$$
which induces a canonical isomorphism in cohomology
\begin{align*} 
H^*(K \rtimes G, \integer) \cong H^*({\rm{Tot}}^*(A^{*,*}(K \rtimes G , \integer))) \oplus H^*(G,\integer),
\end{align*}
and the respective one with coefficients in $\Tt$,
\begin{align*} 
H^*(K \rtimes G, \Tt) \cong H^*({\rm{Tot}}^*(A^{*,*}(K \rtimes G , \Tt))) \oplus H^*(G,\Tt)
\end{align*}
where $A^{*,*}(K \rtimes G , \Tt) = C^{*,*>0}(K \rtimes G , \Tt)$
\end{lemma}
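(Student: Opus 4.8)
The plan is to note that the claimed decomposition is already present at the level of the bigraded cochain groups, and that the only thing to verify is that the two pieces of the total differential $D=\delta_G\oplus(-1)^p\delta_K$ do not mix the two summands; once this is done, the total complex splits as an internal direct sum of two subcomplexes, and taking cohomology together with the previous lemma yields the statement.

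First I would record the tautological splitting of bigraded groups $C^{p,*}(K\rtimes G,\integer)=C^{p,0}(K\rtimes G,\integer)\oplus A^{p,*}(K\rtimes G,\integer)$, where $A^{p,*}$ consists of the rows $q>0$. Under the identification $\Maps(G^p\times K^0,\integer)=\Maps(G^p,\integer)$, together with the normalization condition, the $0$-th row is literally the normalized bar cochain complex $C^p(G,\integer)$, and $\delta_G$ restricts on it to the ordinary group-cochain differential of $G$ acting on the trivial module $\integer$ — indeed, in the absence of $K$-entries the last term $(-1)^{p+1}F[g_1|...|g_p\| g_{p+1}(k_1)|...]$ in Definition~\ref{definition double complex} degenerates to $(-1)^{p+1}F[g_1|...|g_p\|\,]$, which is exactly the last term of the bar differential of $G$ with trivial coefficients. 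Next I would check that $D$ preserves each summand: the horizontal differential $\delta_G$ fixes the row index $q$, so it preserves both $C^{*,0}$ and $A^{*,*}$; the vertical differential $\delta_K$ raises $q$ by one, so it maps $A^{*,*}$ into itself, and a one-line computation from the $q=0$ case of the formula in Definition~\ref{definition double complex} gives, for $F\in C^{p,0}$,
$$(\delta_K F)[g_1|...|g_p\| k_1]=F[g_1|...|g_p\|\,]+(-1)^{1}F[g_1|...|g_p\|\,]=0,$$
i.e. $\delta_K$ vanishes identically on the $0$-th row (the assertion already noted before Definition~\ref{def:complex_A}). Hence $D$ carries $C^{*,0}$ into $C^{*+1,0}$, so $(C^*(G,\integer),\delta_G)$, viewed as a double complex concentrated in row $0$ with zero vertical differential, is a genuine sub-double-complex of $C^{*,*}(K\rtimes G,\integer)$, and so is $A^{*,*}(K\rtimes G,\integer)$. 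Degreewise, ${\rm{Tot}}^n(C^{*,*})=C^{n,0}\oplus{\rm{Tot}}^n(A^{*,*})$, so the total complex is the internal direct sum of these two subcomplexes. This is the asserted canonical isomorphism of double complexes — canonical because it is given by the evident inclusion of, and projection onto, the $0$-th row — and since cohomology commutes with finite direct sums of complexes, it induces the direct-sum decomposition of $H^*$; combining with the earlier lemma identifying $H^*({\rm{Tot}}^*(C^{*,*}(K\rtimes G,\integer)))$ with $H^*(K\rtimes G,\integer)$ gives the first cohomological statement.

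For coefficients in $\Tt$, the same argument applies verbatim with additive notation replaced by multiplicative notation; alternatively, since every cochain group is of the form $\Maps(G^p\times K^q,-)$ and the bigraded splitting is plainly natural in the coefficient abelian group, the decomposition of double complexes over $\Tt$ follows by functoriality, and one then invokes the $\Tt$-version of the earlier lemma. There is essentially no genuine obstacle here: the only point requiring care is the sign bookkeeping in $D$, and this is immaterial precisely because $\delta_G$ preserves rows while $\delta_K$ is identically zero on row $0$, so the two summands never interact.
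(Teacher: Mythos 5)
Your argument is correct and is essentially the paper's own: the lemma is stated there as an immediate consequence of the observation preceding Definition \ref{def:complex_A} that the $0$-th row with $\delta_G$ is the normalized bar complex of $G$ and that $\delta_K$ vanishes on it, which is exactly the splitting you verify (including the correct $q=0$ computation showing $\delta_K F=F-F=0$ on row $0$). Nothing further is needed.
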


\section{The case of the trivializable semi-direct product $K \rtimes G$}\label{Section: The case of the trivializable semi-direct product.}
 Whenever the semi-direct product $K \rtimes G$ is isomorphic to $K \times G$ we know by Proposition \ref{pro:trivial_semi-direct_product_KxG} and Remark \ref{rem:map_from_KxG_to_KxK} that there exists a homomorphism $\sigma: G \to K$ such that $K \rtimes G \to K \rtimes K, (k,g) \mapsto (k, \sigma(g))$ is a homomorphism of groups. In this case we have the homomorphisms
$$K \rtimes G \to K \rtimes K \stackrel{\mu}{\to} K, \hspace{0.5cm} (k,g) \mapsto (k, \sigma(g))\mapsto k\sigma(g)$$
which induce the following homomorphism at the level of their homologies
$$H_*(K \rtimes G, \integer) \to  H_*(K \rtimes K, \integer) \stackrel{\mu_*}{\to} H_*(K , \integer).$$
Since the interesting information lies on the homomorphism $\mu_*$ we will investigate its properties.

\subsection{The Shuffle homomorphism}
In what follows we will describe how to obtain the map $\mu_*$ at the chain level using the explicit models for $H_*(K \rtimes K, \integer)$ and $H_*(K , \integer)$ defined previously. Its definition needs some preparation.

Take a base element
$[g_1|...|g_p||k_1|...|k_q] $
in $C_p(EK,\integer) \otimes_\integer C_q(EK,\integer)$ and think of it as a way to represent $\Delta^p \times \Delta^q$ as the product of the simplices $[g_1|...|g_p] \times[k_1|...|k_q ]$ in $BK$. For $\lambda \in {\rm{Shuff}}(p,q)$ a $(p,q)$-shuffle, i.e. an element in the symmetric group ${\mathfrak{S}}_{p+q}$ such that $\lambda(i) <\lambda(j)$ whenever $1 \leq i < j \leq p $ or $p+1 \leq i < j \leq p+q$, we can define an element
$$\lambda [g_1|...|g_p|k_1|...|k_q] \in C_{p+q}(EK,\integer)$$
such that $\lambda [g_1|...|g_p|k_1|...|k_q]: =[s_1 | ... |s_{p+q}]$ with
\begin{align*}s_{\lambda(i)}= \left\{ \begin{matrix}
g_i & \text{if} & i \leq p\\
g_{p+\lambda(i)-i+1} \dots g_{p-1}g_pk_{i-p} (g_{p+\lambda(i)-i+1} \dots g_{p-1}g_p)^{-1} & \text{if} & i > p.
\end{matrix} \right.\end{align*}
From the Eilenberg-Zilber theorem it follows that the set $$\{\lambda [g_1|...|g_p|k_1|...|k_q] \colon \lambda \in {\rm{Shuff}}(p,q) \}$$
is a subdivision in simplices of dimension $p+q$ of the product of the simplices $[g_1|...|g_p]\times[k_1|...|k_q]$.

An equivalent way to see the elements $\lambda [g_1|...|g_p|k_1|...|k_q]$ is the following: a $(p,q)$-shuffle can also be understood as a way to parameterize a lattice path of minimum distance from the point $(0,0)$ to the point $(p,q)$; one moves one unit to the right   in the steps $\lambda(1), ..., \lambda(p)$ and one unit up in the steps $\lambda(p+1), ..., \lambda(p+q)$. We label the horizontal and vertical edges in the rectangular lattice defined by the points $(0,0),(p,0),(p,q),(0,q)$ by the following rule:
\begin{itemize}
\item The horizontal path between $(i-1,j)$ and $(i,j)$ is labeled with $g_i$ independent of $j$.
\item The vertical path between $(p,j-1)$ and $(p,j)$ is labeled with $k_j$ and all the other vertical paths are labeled in such a way that the squares become commutative squares (when thinking of the labels as maps). This implies that the vertical edge from $(i,j-1)$ to $(i,j)$ is labeled with $$g_{i+1} \dots g_{p-1}g_pk_j(g_{i+1} \dots g_{p-1}g_p)^{-1}. $$
\end{itemize}
Since $\lambda $ parameterizes a path in the lattice, the element $\lambda [g_1|...|g_p|k_1|...|k_q]$ encodes the labels that the path $\lambda$ follow in order to go from $(0,0)$ to $(p,q)$.


We can now define the map of complexes which in homology realizes $\mu_*$:

\begin{definition}
Let 
$$\tau : C_p(EK,\integer) \otimes_\integer C_q(EK,\integer) \to C_{p+q}(EK,\integer) 
$$
be the graded homomorphism defined on generators by the equation
\begin{align*}
\tau(g_1, ... g_p, g||k_1,...,k_q,k) := \sum_{\lambda \in {\rm{Shuff}}(p,q)} (gk)^{-1}\cdot  (-1)^{|\lambda|}\lambda [g_1| ...| g_p|gk_1g^{-1}|...|gk_qg^{-1}]
\end{align*}
where $|\lambda|$ denotes the sign of the permutation $\lambda$. This is usually called  the {\it{Shuffle homomorphism}}.
\end{definition}
We claim that the Shuffle homomorphism $\tau$ is an {\it{admissible product}} in the sense described in \cite[IV-55, p. 118]{Brown}
\begin{theorem} \label{thm:shuffle_map}
The graded homomorphism 
$$\tau : {\rm{Tot}}_*(C_*(EK,\integer) \otimes_\integer C_*(EK,\integer)) \to C_{*}(EK,\integer) 
$$
is a chain map that satisfies the equations 
\begin{align*}
\tau \left( (a^{-1},h^{-1}) \cdot (g_1, ... g_p, g||k_1,...,k_q,k) \right) &= (ha)^{-1} \cdot \tau (g_1, ... g_p, g||k_1,...,k_q,k)\\
(\epsilon \otimes \epsilon) (k,g) &= \epsilon(kg).
\end{align*}
Hence it induces a graded homomorphism in homology
$$\tau_*: H_*(K \rtimes K, \integer) \to H_*(K, \integer)$$
which is equal to the one induced by the pushforward $\mu_*$.
\end{theorem}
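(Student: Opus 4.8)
The plan is to prove the three claims in sequence: (1) $\tau$ is a chain map, (2) $\tau$ is equivariant in the stated sense (hence descends to coinvariants), (3) $\tau$ is compatible with augmentations; and then to identify $\tau_*$ with $\mu_*$.

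\textbf{Chain map property.} First I would verify that $\tau$ commutes with the differentials, i.e. $\tau \circ (\partial_K \otimes 1 \oplus (-1)^p 1 \otimes \partial_K) = \partial_K \circ \tau$. The cleanest route is the geometric/combinatorial picture already set up in the excerpt: a $(p,q)$-shuffle $\lambda$ parameterizes a minimal lattice path from $(0,0)$ to $(p,q)$ in the $p \times q$ rectangle, with horizontal edges labeled $g_i$ and vertical edges labeled by the appropriate conjugates of the $k_j$; the collection $\{\lambda[g_1|\dots|g_p|k_1|\dots|k_q]\}_\lambda$ triangulates $\Delta^p \times \Delta^q$ compatibly with the standard Eilenberg--Zilber subdivision. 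Applying $\partial_K$ to a single $\lambda$-simplex produces its boundary faces; the interior faces (coming from collapsing two consecutive steps of the same type, or from the conjugation bookkeeping in the lattice squares being commutative) cancel in pairs between adjacent shuffles, exactly as in the classical proof that the shuffle/Alexander--Whitney-type maps are chain maps, while the exterior faces reassemble into $\tau$ applied to $\partial_K \otimes 1$ and $1 \otimes \partial_K$. Concretely I would fix a shuffle $\lambda$, write out the $p+q+1$ face operators $d_i$ applied to $[s_1|\dots|s_{p+q}]$, sort them into ``boundary of the rectangle'' faces versus ``interior'' faces, and check the interior cancellation against the shuffle $\lambda$ obtained by transposing the two merged steps; the sign $(-1)^{|\lambda|}$ together with the usual $(-1)^i$ in the simplicial differential makes the signs work out. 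The prefactor $(gk)^{-1}$ only rides along and is harmless here since all differentials are $\integer H$-linear.

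\textbf{Equivariance and augmentation.} The equivariance identity $\tau\big((a^{-1},h^{-1})\cdot(g_1,\dots,g_p,g\,||\,k_1,\dots,k_q,k)\big) = (ha)^{-1}\cdot \tau(g_1,\dots,g_p,g\,||\,k_1,\dots,k_q,k)$ I would check by direct substitution: the left action of $(k,g)\in K\rtimes K$ was defined by $(a,h)\cdot(g_1,\dots||k_1,\dots) = (g_1,\dots,g_{p+1}h^{-1}||h(k_1),\dots,h(k_{q+1})a^{-1})$ with $h(-)$ conjugation by $h$; so $(a^{-1},h^{-1})$ sends $g\mapsto gh$ and $k\mapsto h^{-1}kh\cdot a$ in the last slots, while leaving $g_1,\dots,g_p,k_1,\dots,k_q$ untouched. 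Plugging into the definition of $\tau$, the conjugated entries $g k_i g^{-1}$ are unchanged (only the last slot $k$ moved), the sign and the shuffle sum are unchanged, and the prefactor $(gk)^{-1}$ becomes $(g\cdot h^{-1}k h a)^{-1} = a^{-1}(hg)^{-1}\cdot(\text{conjugation correction})$ — I would track the conjugation by $h$ acting on $C_{p+q}(EK,\integer)$ and confirm it matches $(ha)^{-1}$ acting on the original output, using that the $\integer K$-action on bar generators is by right multiplication in the last slot. The augmentation identity $(\epsilon\otimes\epsilon)(k,g) = \epsilon(kg)$ is immediate since both sides equal $1$; combined with the chain-map property this shows $\tau$ is a morphism of augmented complexes over $\integer$, hence induces $\tau_*$ on homology after tensoring with $\integer$ over the respective group rings.

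\textbf{Identification with $\mu_*$.} Finally, to see $\tau_* = \mu_*$, I would invoke the standard fact that a homology class is computed by any $\integer$-free resolution together with a chain map over the augmentation lifting the identity on $\integer$. Both $C_*(EK,\integer)\otimes_\integer C_*(EK,\integer)$ and $C_*(EK,\integer)$ are free resolutions of $\integer$ — the former over $\integer(K\rtimes K)$, the latter over $\integer K$ — and $\mu:K\rtimes K\to K$ makes $C_*(EK,\integer)$ into a complex of $\integer(K\rtimes K)$-modules via restriction of scalars along $\mu$. By the comparison theorem for projective resolutions, any two $\integer(K\rtimes K)$-chain maps between these resolutions lifting $\id_\integer$ are chain homotopic, so it suffices to check that $\tau$ is such a lift: chain map (Step 1), $\integer(K\rtimes K)$-equivariant when the target is given the $\mu$-pullback module structure — which is precisely what the equivariance identity in Step 2 says once one unwinds that $(ha)^{-1}$ acting on $C_{p+q}(EK,\integer)$ is the action of $\mu(a,h)^{-1} = (a\sigma(h))^{-1}$ with $\sigma=\id$, i.e. the action of $\mu((a^{-1},h^{-1}))^{-1}$ — and augmentation-preserving (Step 3). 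Then by definition $\mu_*$ is the map on $H_*(-) = H_*(C_* \otimes_{\integer(K\rtimes K)} \integer)$ induced by exactly such a lift, so $\tau_* = \mu_*$.

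\textbf{Main obstacle.} I expect the chain-map verification (Step 1) to be the real work: correctly organizing the face cancellations among shuffles in the presence of the conjugation-twisted vertical labels, and checking that the twisting is consistent with the ``commutative squares'' prescription so that the interior faces genuinely match up. The equivariance and augmentation checks are bookkeeping, and the identification with $\mu_*$ is then formal via the comparison theorem; but getting the signs and the conjugation factors to align in Step 1 is where care is needed, and it is essentially the content of the Eilenberg--Zilber-type argument alluded to in the reference to \cite[Theorem 5.2]{Eilenberg-MacLane} and \cite[IV-55, p.\ 118]{Brown}.
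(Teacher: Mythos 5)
Your proposal follows essentially the same route as the paper: the chain-map property via the Eilenberg--Zilber shuffle subdivision of the prism $[g_1|\dots|g_p]\times[k_1|\dots|k_q]$, the equivariance and augmentation identities by direct computation, and the identification $\tau_*=\mu_*$ by viewing $\tau$ as a $\mu$-equivariant map of free resolutions lifting the identity on $\integer$ (the paper states this last step more tersely, without invoking the comparison theorem by name). One small slip in your equivariance bookkeeping: the action of $(a^{-1},h^{-1})$ conjugates all of $k_1,\dots,k_q$ by $h^{-1}$ rather than leaving them untouched, and it is precisely this conjugation that cancels against replacing $g$ by $gh$ in the entries $gk_ig^{-1}$ appearing in $\tau$, so your stated conclusion (and the prefactor matching $(ha)^{-1}$) is nonetheless correct.
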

\begin{proof}
A routine calculation shows that the equations above are satisfied.
The proof of the fact that $\tau$ is a chain map, namely that
$$\partial_K \circ \tau = \tau \circ ( \partial_K \otimes 1 + (-1)^p 1 \otimes \partial_K), $$
is essentially included in the proof of \cite[Theorem 5.2]{Eilenberg-MacLane}; the decomposition
in simplices of dimension $p+q$ of 
the product of the simplices $[g_1|...|g_p]\times [k_1|...|k_q]$ in $BK$ is done by choosing appropriately $p+q$ edges with the use of the 
$(p,q)$-shuffles as follows
$$\tau[g_1|...|g_p||k_1|...|k_q] = \sum_{\lambda \in {\rm{Shuff}}(p,q)} (-1)^{\lambda} \lambda [g_1| ...| g_p|k_1|...|k_q].$$

With this decomposition of $[g_1|...|g_p]\times [k_1|...|k_q]$, its boundary can be calculated as 
$ (\partial_K \times 1 + (-1)^p 1 \times \partial_K) [g_1|...|g_p]\times [k_1|...|k_q]$ or alternatively as
$\partial_K( \tau[g_1|...|g_p||k_1|...|k_q])$; then it follows that $\tau$ is a chain map.

Now, since $\tau$ is a chain map which preserves the module structures, then it induces a chain map at the level of the coinvariants
$${\rm{Tot}}_*(C_*(EK,\integer) \otimes_\integer C_*(EK,\integer))\otimes_{\integer (K \rtimes K)} \integer \to C_{*}(EK,\integer) \otimes_{\integer K} \integer$$
which defines a homomorphism
$$\tau_*: H_*(K \rtimes K, \integer) \to H_*(K, \integer).$$
This homomorphism $\tau_*$ must be equal to the pushforward $\mu_*$ since $\tau$ preserves the module structures defined by $\mu$. 
\end{proof}

If we have a homomorphism of groups $\sigma : G \to K$ for which the map  $\psi: K \rtimes G \stackrel{\cong}{\to} K \times G, (k,g) \mapsto (k,\sigma(g))$ induces an isomorphism of semi-direct products, then we have that the  map 
\begin{align*}
 \tau \circ (\sigma \otimes 1):C_p(EG,\integer) \otimes_\integer C_q(EK,\integer) & \to C_{p+q}(EK,\integer) \\
 [g_1|...|g_p||k_1|...|k_q] & \mapsto \tau[\sigma(g_1)|...|\sigma(g_p)||k_1|...|k_q]
\end{align*}
induces a chain map
$$\tau \circ (\sigma \otimes 1) : {\rm{Tot}}_*(C_*(EG,\integer) \otimes_\integer C_*(EK,\integer)) \to C_{*}(EK,\integer) $$
preserving their respective module structures, and hence inducing a homomorphism
$$(\tau \circ (\sigma \otimes 1))_* : H_*(K \rtimes G, \integer) \to H_{*}(K,\integer) $$
which is equal to the pushforward map of the composition $ \mu \circ \psi$; i.e.
$$(\mu \circ \psi)_*= (\tau \circ (\sigma \otimes 1))_* : H_*(K \rtimes G, \integer) \to H_{*}(K,\integer).$$

\subsection{The dual of the Shuffle homomorphism}

Dualizing the map $\tau $, we obtain a homomorphism
\begin{align*}
\tau^\vee :\overline{C}^{n}(K, \integer))& \to \bigoplus_{p+q=n} \overline{C}^{p,q}(K \rtimes K, \integer)\\
(\tau^\vee  F)[s_1|...|s_n] &\mapsto \bigoplus_{p+q=n}F(\tau[s_1|...|s_p||s_{p+1}|...|s_{p+q}])
\end{align*}
which induces a cochain map
$$\tau^\vee : (\overline{C}^{*}(K, \integer),\delta_K) \to ( {\rm{Tot}}^*(\overline{C}^{*,*}(K \rtimes K, \integer)), \delta_G \oplus (-1)^{p} \delta_K);$$
here we have kept the notation of the differentials as $\delta_G$ and $\delta_K$ in order to avoid confusion.
A straightforward calculation shows that the cochain map $\tau^\vee$ preserves normalized cochains
$$\tau^\vee : ({C}^{*}(K, \integer),\delta_K) \to ( {\rm{Tot}}^*({C}^{*,*}(K \rtimes K, \integer)), \delta_G  \oplus (-1)^{p}  \delta_K)$$
 and therefore it induces a homomorphism at the level of cohomologies
$$\tau^*: H^*(K ; \integer) \to H^*(K\rtimes K, \integer)$$
which by Theorem \ref{thm:shuffle_map} is equal to the pullback of the homomorphism of groups $\mu$. If we bundle up our previous discussion we have

\begin{theorem}\label{Theorem: mu bullback}
The homomorphism in cohomology
$$\tau^* : H^*(K  ,\integer) \to H^*(K \rtimes K,\integer)$$
 defined by the cochain map 
\begin{align*}
\tau^\vee :{C}^{n}(K, \integer))& \to \bigoplus_{p+q=n}{C}^{p,q}(K \rtimes K, \integer)\\
(\tau^\vee  F)[s_1|...|s_n] &\mapsto \bigoplus_{p+q=n}F(\tau[s_1|...|s_p||s_{p+1}|...|s_{p+q}])
\end{align*}
is equal to 
$$ \mu^* : H^*(K  ,\integer) \to H^*(K \rtimes K,\integer)$$
which is the pullback of the group homomorphism $\mu: K \rtimes K \to K, (k,g)\mapsto kg$.
\end{theorem}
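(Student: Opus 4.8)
The plan is to deduce the statement from Theorem \ref{thm:shuffle_map} by dualizing, via the comparison theorem for projective resolutions. Recall from the subsection on the cohomology of $K \rtimes K$ that ${\rm{Tot}}_*(C_*(EK,\integer) \otimes_\integer C_*(EK,\integer))$, with the $\integer(K \rtimes K)$-module structure described there, is a free $\integer(K \rtimes K)$-resolution of the trivial module $\integer$ via $\epsilon \otimes \epsilon$, while $C_*(EK,\integer)$ is a free $\integer K$-resolution of $\integer$ via $\epsilon$. Pulling back the latter along $\mu$ turns $C_*(EK,\integer)$ into an $\integer(K \rtimes K)$-complex which is still acyclic and augmented to $\integer$, and by definition $\mu^* : H^*(K,\integer) \to H^*(K \rtimes K,\integer)$ is the map in cohomology obtained by applying $\Hom_{\integer(K \rtimes K)}(-,\integer)$ to any $\integer(K \rtimes K)$-linear chain map from the first resolution to this complex lifting $\id_\integer$; such a map exists and is unique up to chain homotopy.

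Next I would observe that $\tau$ is exactly such a chain map. Theorem \ref{thm:shuffle_map} asserts that $\tau : {\rm{Tot}}_*(C_*(EK,\integer) \otimes_\integer C_*(EK,\integer)) \to C_*(EK,\integer)$ is a chain map; its first displayed identity $\tau\big((a^{-1},h^{-1}) \cdot x\big) = (ha)^{-1} \cdot \tau(x)$ says precisely that $\tau$ is $\integer(K \rtimes K)$-equivariant when the target carries the module structure pulled back along $\mu$ (so that $(k,g)$ acts the way $\mu(k,g) = kg$ does), and its second displayed identity $(\epsilon \otimes \epsilon)(k,g) = \epsilon(kg)$ says that $\tau$ covers $\id_\integer$ on augmentations. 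Hence $\Hom_{\integer(K \rtimes K)}(\tau, \integer)$ induces $\mu^*$ in cohomology, independently of any choices.

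It remains to match this dualized map with the explicit formula in the statement and to pass to the normalized complexes. Under the identifications $\Hom_{\integer(K \rtimes K)}(C_p(EK,\integer) \otimes_\integer C_q(EK,\integer), \integer) \cong \Maps(K^p \times K^q, \integer)$ and $\Hom_{\integer K}(C_n(EK,\integer), \integer) \cong \Maps(K^n, \integer)$ coming from the chosen bases $[g_1|\dots|g_p||k_1|\dots|k_q]$ and $[s_1|\dots|s_n]$, precomposition with $\tau$ is exactly the assignment $F \mapsto \big([s_1|\dots|s_p||s_{p+1}|\dots|s_{p+q}] \mapsto F(\tau[s_1|\dots|s_p||s_{p+1}|\dots|s_{p+q}])\big)$, which is the formula defining $\tau^\vee$ — here a $\integer K$-linear $F$ is automatically $\integer(K \rtimes K)$-linear via $\mu$ into the trivial module, so $F \circ \tau$ is $\integer(K \rtimes K)$-linear. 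Finally, since $\tau^\vee$ preserves normalized cochains, as already noted, and the normalized total complexes compute $H^*(K \rtimes K, \integer)$ and $H^*(K,\integer)$, the equality $\tau^* = \mu^*$ descends to the normalized complexes as claimed. The one genuinely delicate point in all of this is the bookkeeping: keeping straight the standard $\integer K$-module structure on $C_*(EK,\integer)$ versus the one twisted by $\mu$, and checking that $\tau$ is linear for the twisted one — which is precisely the content of the first identity of Theorem \ref{thm:shuffle_map}, already verified there.
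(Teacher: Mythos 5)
Your proof is correct and follows essentially the same route as the paper: dualize the shuffle map $\tau$, use the equivariance and augmentation identities of Theorem \ref{thm:shuffle_map} to see that $\tau$ is an $\integer(K\rtimes K)$-linear chain map of resolutions lifting the identity over $\mu$, and then pass to normalized cochains. The only difference is that you spell out the comparison-theorem justification (existence and uniqueness up to chain homotopy of such a lift, hence well-definedness of the induced map as $\mu^*$) that the paper leaves implicit when it deduces $\tau^*=\mu^*$ from Theorem \ref{thm:shuffle_map}.
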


\subsection{Further properties of the Shuffle homomorphism}
Consider the homomorphisms
  \begin{align*}
    \iota_K : K & \to K \rtimes K,  &  \iota_G : K & \to K \rtimes
    K \\
    x & \mapsto (x,1_K)  & g & \mapsto (1_K,g).
  \end{align*}
  and note that they fit into the commutative diagram
  $$\xymatrix{ 
K \ar[r]^{\iota_K}  \ar[rd]_\cong& K \rtimes K \ar[d]^\mu & \ar[l]_{\iota_G} \ar[ld]^\cong K\\
& K & 
  }$$
  which induces the commutative diagram
 \begin{eqnarray}
\label{diagram cohomology of K x G on the adjoint
action}\xymatrix{
 & H^*(K, \integer) \ar[ld]_\cong \ar[d]^{\mu^*} \ar[rd]^\cong & \\
 H^*(K, \integer) & H^*(K \rtimes K, \integer) \ar[l]^{\iota_K^*}
 \ar[r]_{\iota_G^*} & H^*(K,\integer).
}\end{eqnarray}

From Lemma \ref{lem:splitting_of_C**_in_A**_and_G} we know that there is a canonical isomorphism $$H^p(K
\rtimes K, \integer) \cong H^p({\rm{Tot}}^*(A^{*,*}(K
\rtimes K, \integer))) \oplus H^p(K,
\integer)$$ and the homomorphism $\iota_G^*$ is precisely
the projection on the second component of the direct
sum $H^p({\rm{Tot}}^*(A^{*,*}(K
\rtimes K, \integer))) \oplus H^p(K,
\integer)$. 

Moreover, by defining the chain map
\begin{align} \label{shuffle_hom_for_A}
\tau_1^\vee :{C}^{n}(K, \integer))& \to \bigoplus_{p+q=n, \  q>0} {A}^{p,q}(K \rtimes K, \integer)\\
\nonumber (\tau_1^\vee  F)[s_1|...|s_n] &\mapsto \bigoplus_{p+q=n, \ q>0}F(\tau[s_1|...|s_p||s_{p+1}|...|s_{p+q}])
\end{align}
we have that the chain map 
$$\tau^\vee: {C}^{*}(K, \integer)) \to {\rm{Tot}}^*( {A}^{*,*}(K \rtimes K, \integer)) \oplus {C}^{*,0}(K \rtimes K, \integer)$$
is isomorphic to the chain map
$$\tau_1^\vee \oplus 1: {C}^{*}(K, \integer)) \to {\rm{Tot}}^*( {A}^{*,*}(K \rtimes K, \integer)) \oplus {C}^{*,0}(K \rtimes K, \integer)$$
and therefore we have that at the cohomological level we obtain the commutative diagram
 \begin{eqnarray} \label{diagram H*A -> H*K}
 \xymatrix{
 & H^p(K, \integer) \ar[ld]_\cong \ar[d]^{\tau^*_1}   \\
 H^p(K, \integer) & H^p({\rm{Tot}}^*( {A}^{*,*}(K \rtimes K, \integer)) )
 \ar[l]^(0.65){\iota_K^*|_{A}}
}\end{eqnarray} for all $p>0$, where  at the cochain level $\iota_K^*|_A$ is simply  the projection map on the 0-th column
$$\iota_K^*|_A : {\rm{Tot}}^*( {A}^{*,*}(K \rtimes K, \integer)) \to  {A}^{*,0}(K \rtimes K, \integer) \cong C^{*>0}(K,\integer).$$
Therefore we make the following definition
\begin{definition} Let
$$B^{*,*}(K \rtimes G , \integer) := C^{*>0,*>0}(K \rtimes G , \integer)$$
be the sub double complex of $C^{*,*}(K \rtimes G , \integer)$ with trivial  0-th row and trivial 0-th column, and differentials $\delta_G$ and $\delta_K$.
\end{definition}

\begin{lemma} \label{lem:iso_A_with_B_and_cohomology_of_K}
The homomorphism
\begin{align*}
H^p(K,\integer) \oplus H^p( {\rm{Tot}}^*( {B}^{*,*}(K \rtimes K, \integer)))   & \to H^p( {\rm{Tot}}^*( {A}^{*,*}(K \rtimes K, \integer)))\\
x \oplus y  & \mapsto \tau_1^*x + y
\end{align*}
is an isomorphism for all $p>0$.
\end{lemma}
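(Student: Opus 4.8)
The plan is to realize $\mathrm{Tot}^*(B^{*,*}(K\rtimes K,\integer))$ as a subcomplex of $\mathrm{Tot}^*(A^{*,*}(K\rtimes K,\integer))$ whose quotient complex computes $H^*(K,\integer)$ in positive degrees, and then to use the cochain map $\tau_1^\vee$ of \eqref{shuffle_hom_for_A} to split the associated long exact sequence in cohomology.

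First I would set up the short exact sequence of complexes. By definition $B^{p,q}=C^{p,q}$ precisely for $p,q\ge 1$, while $A^{p,q}=C^{p,q}$ for all $p\ge 0$ and $q\ge 1$; since $\delta_G$ and $\delta_K$ only raise $p$ or $q$, the subgroup $\mathrm{Tot}^*(B^{*,*})\subseteq \mathrm{Tot}^*(A^{*,*})$ is stable under the total differential $\delta_G\oplus(-1)^p\delta_K$. Hence there is a short exact sequence of cochain complexes
$$0\to \mathrm{Tot}^*(B^{*,*}(K\rtimes K,\integer))\to \mathrm{Tot}^*(A^{*,*}(K\rtimes K,\integer))\xrightarrow{q} Q^*\to 0,$$
in which $Q^n=A^{0,n}(K\rtimes K,\integer)=C^{0,n}(K\rtimes K,\integer)$ for $n\ge 1$ and $q$ is the projection onto the $0$-th column $p=0$. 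On that column $(-1)^p=1$ and the $\delta_G$-part of the total differential lands in the column $p=1\subseteq B$, so the differential induced on $Q^*$ is $\delta_K$; under the canonical identification $C^{0,*}(K\rtimes K,\integer)\cong C^{*}(K,\integer)$ of Definition \ref{definition normalized double complex for GxK} this is exactly the normalized bar differential of $K$ with trivial coefficients. That differential vanishes on $C^0(K,\integer)$, so deleting the degree-zero term does no harm and $H^p(Q^*)\cong H^p(K,\integer)$ for every $p>0$.

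Next I would check that $\tau_1^*$ splits $q_*$. The commutative triangle \eqref{diagram H*A -> H*K} says that $\iota_K^*|_A\circ \tau_1^*=\mathrm{id}_{H^p(K,\integer)}$ for all $p>0$, and, as recorded right after that diagram, at the cochain level $\iota_K^*|_A$ is nothing but the projection of $\mathrm{Tot}^*(A^{*,*})$ onto its $0$-th column, i.e. the map $q$ above. Thus $q_*\circ\tau_1^*=\mathrm{id}$ on $H^p$ for $p>0$. Consequently $q_*$ is surjective, the connecting homomorphism of the long exact sequence attached to the short exact sequence above vanishes, and for each $p>0$ we get a split short exact sequence
$$0\to H^p(\mathrm{Tot}^*(B^{*,*}(K\rtimes K,\integer)))\xrightarrow{\iota_*} H^p(\mathrm{Tot}^*(A^{*,*}(K\rtimes K,\integer)))\xrightarrow{q_*} H^p(K,\integer)\to 0,$$
where $\iota_*$ is induced by the inclusion of complexes and $\tau_1^*$ is a section of $q_*$. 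The induced direct-sum decomposition, written in the order of the statement, is $x\oplus y\mapsto \tau_1^*x+\iota_*(y)=\tau_1^*x+y$, which is precisely the homomorphism of the Lemma; hence it is an isomorphism for all $p>0$.

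The only genuinely non-formal input is the identity $\iota_K^*|_A\circ \tau_1^*=\mathrm{id}$, but this is not something to reprove here: it is exactly the content of diagram \eqref{diagram H*A -> H*K}, which in turn rests on Theorem \ref{Theorem: mu bullback} (so that $\tau^*=\mu^*$) together with $\mu\circ\iota_K=\mathrm{id}_K$. Everything else is a matter of tracking which bidegrees survive into $A$, $B$ and the quotient, plus the standard long-exact-sequence argument for a split short exact sequence of complexes; the main point requiring care is the identification of the quotient complex $Q^*$ and of its cohomology, in particular in degree one.
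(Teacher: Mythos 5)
Your proposal is correct and takes essentially the same route as the paper: the paper's proof is exactly the short exact sequence of complexes $0\to{\rm{Tot}}^*(B^{*,*}(K\rtimes K,\integer))\to{\rm{Tot}}^*(A^{*,*}(K\rtimes K,\integer))\to C^{*>0}(K,\integer)\to 0$ together with the observation that the induced long exact sequence splits because diagram \eqref{diagram H*A -> H*K} gives $\iota_K^*|_A\circ\tau_1^*=\mathrm{id}$. You simply spell out the details the paper leaves implicit (the identification of the quotient complex with the bar complex of $K$, the vanishing of $\delta_K$ on degree zero, and the resulting direct-sum decomposition), so no further changes are needed.
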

\begin{proof}
The short exact sequence of complexes
$$0 \to  {\rm{Tot}}^*( {B}^{*,*}(K \rtimes K, \integer)) \to  {\rm{Tot}}^*( {A}^{*,*}(K \rtimes K, \integer)) \to C^{*>0}(K,\integer) \to 0$$
induces a long exact sequence in cohomology groups 
$$\to H^p({\rm{Tot}}^*( {B}^{*,*}(K \rtimes K, \integer))) \to  H^p({\rm{Tot}}^*( {A}^{*,*}(K \rtimes K, \integer))) \stackrel{\iota_K^*|_A}{\to} H^{p}(K,\integer) \to$$
which splits by diagram \eqref{diagram H*A -> H*K}.
\end{proof}
Therefore we can conclude that there is a canonical splitting of the cohomology of $K \rtimes K$ in terms of the cohomology of $K$ and of the cohomology of the double complex ${B}^{*,*}(K \rtimes K, \integer)$.
\begin{proposition} \label{pro:tau_pi2_inKxK} The homomorphism 
\begin{align*}
H^p(K,\integer) \oplus H^p( {\rm{Tot}}^*( {B}^{*,*}(K \rtimes K, \integer))) \oplus H^p(K,\integer)  & \stackrel{\cong}{\to} H^p(K \rtimes K,\integer)\\
x \oplus y \oplus z & \mapsto \tau_1^*x + y + \pi_2^*z
\end{align*}
is an isomorphism for all $p>0$. Here $\pi_2:K \rtimes K \to K, (a,g) \mapsto g$ denotes the homomorphism induced by the projection on the second coordinate satisfying $\pi_2 \circ \iota_G= 1$.
\end{proposition}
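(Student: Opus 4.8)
The plan is to obtain the stated decomposition by stacking the two splittings already at our disposal: the splitting of $H^p(K\rtimes K,\integer)$ coming from Lemma~\ref{lem:splitting_of_C**_in_A**_and_G}, and, inside its first summand, the splitting of $H^p({\rm{Tot}}^*(A^{*,*}(K\rtimes K,\integer)))$ coming from Lemma~\ref{lem:iso_A_with_B_and_cohomology_of_K}. The only genuinely new ingredient is the observation that the section of $\iota_G^*$ implicit in Lemma~\ref{lem:splitting_of_C**_in_A**_and_G} may be replaced by $\pi_2^*$; this is exactly where the relation $\pi_2\circ\iota_G=1_K$ enters.

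First I would set up the outer splitting. By Lemma~\ref{lem:splitting_of_C**_in_A**_and_G} the inclusion $A^{*,*}(K\rtimes K,\integer)\hookrightarrow C^{*,*}(K\rtimes K,\integer)$ of double complexes induces, for $p>0$, an injection $\iota_A\colon H^p({\rm{Tot}}^*(A^{*,*}(K\rtimes K,\integer)))\to H^p(K\rtimes K,\integer)$ onto a direct summand, and --- as recalled just after that lemma --- $\iota_G^*$ is precisely the projection onto the complementary summand $H^p(K,\integer)$, so that $\operatorname{im}(\iota_A)=\ker(\iota_G^*)$. On the other hand $\pi_2\circ\iota_G=1_K$ gives $\iota_G^*\circ\pi_2^*=\id$ on $H^p(K,\integer)$, so $\pi_2^*$ is a split monomorphism whose image meets $\ker(\iota_G^*)$ only in $0$; since $\iota_G^*$ is onto, a short diagram chase (writing $c=(c-\pi_2^*\iota_G^*c)+\pi_2^*\iota_G^*c$) yields
$$H^p(K\rtimes K,\integer)\;=\;\iota_A\big(H^p({\rm{Tot}}^*(A^{*,*}(K\rtimes K,\integer)))\big)\ \oplus\ \pi_2^*\big(H^p(K,\integer)\big)\qquad(p>0).$$

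Next I would insert the inner splitting. Lemma~\ref{lem:iso_A_with_B_and_cohomology_of_K} identifies $H^p({\rm{Tot}}^*(A^{*,*}(K\rtimes K,\integer)))$ with $H^p(K,\integer)\oplus H^p({\rm{Tot}}^*(B^{*,*}(K\rtimes K,\integer)))$ via $x\oplus y\mapsto\tau_1^*x+y$, where $\tau_1^*$ is the map of \eqref{shuffle_hom_for_A}--\eqref{diagram H*A -> H*K} and $H^p({\rm{Tot}}^*(B^{*,*}))\hookrightarrow H^p({\rm{Tot}}^*(A^{*,*}))$ is induced by $B^{*,*}\subset A^{*,*}$. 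Composing $\iota_A$ with this identification turns the outer splitting into the asserted map
$$H^p(K,\integer)\oplus H^p({\rm{Tot}}^*(B^{*,*}(K\rtimes K,\integer)))\oplus H^p(K,\integer)\ \xrightarrow{\ \cong\ }\ H^p(K\rtimes K,\integer),\quad x\oplus y\oplus z\mapsto\tau_1^*x+y+\pi_2^*z,$$
with the understanding that $\tau_1^*x$ and $y$ are pushed into $H^p(K\rtimes K,\integer)$ along $\iota_A$. Being a composite of two isomorphisms, it is an isomorphism.

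The step I expect to be the (modest) crux is checking that the three images really sit in direct sum, i.e.\ that the $\tau_1^*$- and $B^{*,*}$-contributions carry no $\pi_2^*$-component. This is precisely why one works with the refined chain map $\tau_1^\vee$ of \eqref{shuffle_hom_for_A} rather than with $\tau^\vee$: every component of $\tau_1^\vee$ lands in some $A^{p,q}$ with $q>0$, so its image lies in the sub-double-complex $A^{*,*}$, hence in $\ker(\iota_G^*)=\operatorname{im}(\iota_A)$; and $B^{*,*}\subset A^{*,*}$ a fortiori. Granting this, the identity $\pi_2\circ\iota_G=1_K$ does the rest, and no computation beyond Lemmas~\ref{lem:splitting_of_C**_in_A**_and_G} and~\ref{lem:iso_A_with_B_and_cohomology_of_K} is needed.
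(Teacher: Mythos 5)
Your proof is correct and takes essentially the same route as the paper, which states the proposition as an immediate consequence of Lemma \ref{lem:splitting_of_C**_in_A**_and_G} and Lemma \ref{lem:iso_A_with_B_and_cohomology_of_K} together with the fact that $\pi_2^*$ is a section of $\iota_G^*$ (via $\pi_2\circ\iota_G=1$). The short diagram chase you include, showing $\operatorname{im}(\iota_A)=\ker(\iota_G^*)$ and that this summand meets $\operatorname{im}(\pi_2^*)$ only in $0$, is precisely the justification the paper leaves implicit.
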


\section{Categorical definitions}\label{section:Definiciones categoricas}

\subsection{2-category of discrete
$G$-sets with twist}

We shall fix a discrete group $G$.  We define the 2-category of discrete
$G$-sets with twist as follows:

\begin{enumerate}[leftmargin=*]
  \item Objects will be called discrete $G$-sets with twist, and they are pairs
  $(X,\alpha)$, where $X$ is a discrete left
  $G$-set and $\alpha\in Z^2_G(X,\Tt)$ is a normalized 2-cocycle in the
  $G$-equivariant complex of $X$,
  i.e. a map $$\alpha : G \times G \times X \to \Tt,$$ such that
  $$\alpha[\tau|\rho ||x] \ \alpha[\sigma\tau|\rho|| x]^{-1}\ \alpha[\sigma|\tau\rho||x]\ \alpha[\sigma|\tau || \rho x]^{-1}=
 1$$ for all $\sigma,\tau, \rho \in G, x\in
  X$.\\ Note that the previous equation is equivalent to the
  equation $\delta_G \alpha= 1$, when we see $\alpha$ as element
  in $C^{2,1}(X \rtimes G, \Tt)$.
  \item Let $(X,\alpha_X),$ $(Y,\alpha_Y)$ be discrete $G$-sets with twist. A 1-cell from
$(X,\alpha_X)$ to  $(Y,\alpha_Y)$, also called a $G$-equivariant
map, is a pair $(L,\beta)$,
$$\xymatrix{
(X,\alpha_X) \ar[r]^{(L,\beta)} &(Y,\alpha_Y) }$$ where
\begin{itemize}
  \item  $L: X\to Y$ is a morphism of $G$-sets,
  \item $\beta\in C^1_G(X,\Tt)$ is a normalized cochain such that $\delta_G(\beta)= L^*(\alpha_Y)(\alpha_X)^{-1}$,
  i.e., a map $$\beta:G\times X\to \mathbb \Tt$$
\end{itemize}such that $$\beta[\tau || x]\ \beta[\sigma\tau |x]^{-1}\ \beta[\sigma || \tau x]=\alpha_Y[\sigma|\tau || L(x)] \
\alpha_X[\sigma|\tau || x]^{-1},$$ for all $\sigma, \tau \in G, x\in
X$.
  \item Given two 1-cells $(L,\beta), (L,\beta'): (X,\alpha_X)\to
  (Y,\alpha_Y)$, a 2-cell $\theta:(L,\beta)  \Rightarrow
  (L,\beta')$
$$\xymatrix{ (X,\alpha_X)\ar@/^2pc/[rr]_{\quad}^{(L,\beta)}="1"
\ar@/_2pc/[rr]_{(L,\beta')}="2" && (Y,\alpha_Y)
\ar@{}"1";"2"|(.2){\,}="7" \ar@{}"1";"2"|(.8){\,}="8" \ar@{=>}"7"
;"8"^{{\theta}} }$$
  is 0-cochain $\theta\in C^0_G(X,\Tt)$ such that
  $\delta_G(\theta)= \beta'\beta^{-1}$, i.e., a map $\theta: X\to \Tt$,
  such that $$\theta[x] \theta[\sigma x]^{-1} =\beta'[\sigma||x] \ \beta[\sigma||x]^{-1}$$ for
all $\sigma\in G, x\in X$.
\end{enumerate}

Let us define the composition on 1-cells and 2-cells in the
following way. Let $(F,\beta_F): (X,\alpha_X)\to (Y,\alpha_Y)$ and
$(G,\beta_G):(Y,\alpha_Y)\to (Z,\alpha_Z)$ two 1-cells,  define
their composition as $$(G,\beta_G)\circ (F,\beta_F)=(G\circ F,
F^*(\beta_G)\beta_F):(X,\alpha_X)\to (Z,\alpha_Z),$$
$$\xymatrix{
(X, \alpha_X) \ar[rr]^{(F,\beta_F)} \ar[dr]_{(G\circ F,
F^*(\beta_G)\beta_F)} && (Y,\alpha_Y)
 \ar[dl]^{(G,\beta_G)} \\
& (Z,\alpha_Z) &}$$
 and if
$\theta: (L,\beta) \Rightarrow (L, \beta')$ and  $\theta':
(L,\beta') \Rightarrow (L, \beta'')$ are 2-cells, their
composition is the product of the maps, namely
$$\theta'\circ \theta= : \theta'\theta: (L,\beta)\Rightarrow (L,\beta'')$$
$$\xymatrix{ (X, \alpha_X) \ar[rr]^(0.3){(L,\beta')}
\ar@/_3pc/[rr]_{(L,\beta'')}_{}="0"
   \ar@/^3pc/[rr]^{(L,\beta)}_{}="2"  & \ar@{=>}"2";{} ^{\theta} \ar@{=>};"0" ^{\theta'} &
   (Y, \alpha_Y)
  } \qquad = \qquad
\xymatrix{(X, \alpha_X)  \ar@/^2pc/[rr]_{\quad}^{(L,\beta)}="1"
\ar@/_2pc/[rr]_{(L,\beta'')}="2" && (Y, \alpha_Y)
\ar@{}"1";"2"|(.2){\,}="7" \ar@{}"1";"2"|(.8){\,}="8" \ar@{=>}"7"
;"8"^{\theta'\theta} }$$

A straightforward calculation implies that

\begin{lemma}
  The composition of 1-cells and 2-cells satisfy the axioms of a
  2-category.
\end{lemma}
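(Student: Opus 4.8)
The plan is to verify directly that the composition operations defined above satisfy the three families of axioms required of a (strict) 2-category: (1) associativity and unitality of the vertical composition of 2-cells within each hom-category $\mathrm{Hom}((X,\alpha_X),(Y,\alpha_Y))$; (2) associativity and unitality of the horizontal composition of 1-cells; and (3) the interchange law relating horizontal and vertical composition of 2-cells, together with functoriality of horizontal composition on identity 2-cells. Since the underlying data are honest maps valued in the abelian group $\Tt$ (with the $G$-set maps themselves strictly composing), most of these reduce to equalities of $\Tt$-valued functions that hold on the nose rather than up to coherence, so the ``straightforward calculation'' really is a matter of matching cochain formulas.

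First I would treat the hom-categories. A 2-cell $(L,\beta)\Rightarrow(L,\beta')$ is a $0$-cochain $\theta\in C^0_G(X,\Tt)$ with $\delta_G\theta=\beta'\beta^{-1}$, and vertical composition is pointwise multiplication $\theta'\theta$. Associativity and the two unit laws (identity $2$-cell on $(L,\beta)$ being the constant map $1$, since $\delta_G(1)=1=\beta\beta^{-1}$) are immediate from associativity and unitality of multiplication in $\Tt$; one only needs to check that $\theta'\theta$ again satisfies the cocycle-type condition, i.e. $\delta_G(\theta'\theta)=\delta_G(\theta')\delta_G(\theta)=(\beta''\beta'^{-1})(\beta'\beta^{-1})=\beta''\beta^{-1}$, which uses that $\delta_G$ is a homomorphism. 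So each hom-set is a well-defined category (in fact a groupoid, since $\theta^{-1}$ inverts).

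Next I would check horizontal composition of $1$-cells is associative and unital. For $1$-cells $(F,\beta_F)\colon(X,\alpha_X)\to(Y,\alpha_Y)$, $(G,\beta_G)\colon(Y,\alpha_Y)\to(Z,\alpha_Z)$, $(H,\beta_H)\colon(Z,\alpha_Z)\to(W,\alpha_W)$, both bracketings of the triple composite have underlying $G$-set map $H\circ G\circ F$ (strictly associative) and twist cochain $F^*G^*(\beta_H)\,F^*(\beta_G)\,\beta_F$, using $(G\circ F)^*=F^*\circ G^*$ on cochains and associativity of multiplication; one also verifies this cochain satisfies the required $\delta_G$-equation by composing the three individual equations. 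The identity $1$-cell on $(X,\alpha_X)$ is $(\id_X,1)$, and $(\id_X,1)$ composed on either side leaves $(F,\beta_F)$ unchanged because $\id_X^*=\id$ on cochains and $1$ is the multiplicative unit. Finally, for the interchange law I would take a horizontally-composable square of $2$-cells $\theta\colon(F,\beta_F)\Rightarrow(F,\beta_F')$ over $\theta'\colon(G,\beta_G)\Rightarrow(G,\beta_G')$ and observe that horizontal composition of $2$-cells must be defined (the paper leaves this implicit) by $\theta'\ast\theta = F^*(\theta')\cdot\theta$, a $0$-cochain on $X$; then interchange $(\theta_2'\ast\theta_2)(\theta_1'\ast\theta_1)=(\theta_2'\theta_1')\ast(\theta_2\theta_1)$ is again just commutativity and associativity in $\Tt$ together with $F^*$ being multiplicative, and the identity-$2$-cell compatibility is clear.

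The main obstacle—really the only nontrivial point—is bookkeeping rather than mathematics: one must consistently keep track of the pullback conventions and the sign/inversion conventions in the differentials (the paper uses multiplicative notation and normalized cochains, and the mixed differential carries a $(-1)^p$), and make sure that at each stage the composite cochain still lies in the normalized subcomplex and still satisfies the defining $\delta_G$-identity so that the composite is a legitimate cell of the claimed type. Once one fixes the definition of horizontal composition of $2$-cells (the excerpt omits it), every axiom collapses to an identity in the abelian group $\Tt$ pulled back along $G$-set maps, and there are no coherence isomorphisms to verify because the structure is strict. Hence the lemma follows by these routine verifications.
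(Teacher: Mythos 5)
Your verification is correct and is exactly the ``straightforward calculation'' the paper invokes, since the paper records no proof of this lemma beyond that remark. Your explicit choice of horizontal composition of 2-cells, $\theta'\ast\theta = F^*(\theta')\cdot\theta$ (which the paper indeed leaves implicit), is the right one: one checks $\delta_G(F^*\theta')=F^*(\delta_G\theta')$ using $G$-equivariance of $F$, after which every axiom reduces to identities in the abelian group $\Tt$, as you say.
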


\subsection{Pseudomonoids}

A strict 2-monoidal 2-category is a triple $(\mathcal{B},\boxtimes
, \mathbf{I})$ where $\mathcal{B}$ is a 2-category, $\boxtimes:
\mathcal{B}\times \mathcal{B} \to \mathcal{B}$ is a 2-functor and
$\mathbf{I}$ is an object in $\mathcal{B}$, such that $\boxtimes
\circ (\boxtimes \times Id_{\mathcal{B}})= \boxtimes \circ
(Id_{\mathcal{B}}\times \boxtimes)$ and $\mathbf{I}\boxtimes X=
X\boxtimes \mathbf{I} = X$ for every object in $\mathcal{B}$ (for
more details see \cite{Handbook2}).

\begin{definition} \label{coherentobject}
Given a strict 2-monoidal 2-category $(\mathcal{B},\boxtimes ,
\mathbf{I})$, a
{\bf pseudomonoid } in $\mathcal{B}$ consists of:
\begin{itemize}
 \item an object $\mathbf{C} \in \mathcal{B}$,
\end{itemize}
together with:
\begin{itemize}
 \item a {\bf multiplication} 1-cell $\mathbf{m} : \mathbf{C} \boxtimes \mathbf{C} \to \mathbf{C} $,
 \item an {\bf identity-assigning}
  1-cell $\mathbb{I} : \mathbf{I} \to \mathbf{C} $,
\end{itemize}
together with the following 2-isomorphisms:
\begin{itemize}
\item the {\bf associator}:
\begin{eqnarray}
\xy
 (0,15)*+{\mathbf{C} \boxtimes \mathbf{C} \boxtimes \mathbf{C}}="T";
 (-15,0)*+{\mathbf{C} \boxtimes \mathbf{C}}="L";
 (15,0)*+{\mathbf{C} \boxtimes \mathbf{C}}="R";
 (0,-15)*+{\mathbf{C}}="B";
 (-7,0)*{}="ML";
 (7,0)*{}="MR";
     {\ar^{\id \boxtimes \mathbf{m}} "T";"R"};
     {\ar_{\mathbf{m} \boxtimes \id} "T";"L"};
     {\ar^{\mathbf{m}} "R";"B"};
     {\ar_{\mathbf{m}} "L";"B"};
         {\ar@{=>}^{a} "ML";"MR"};
\endxy \label{diagram associator}
\end{eqnarray}
\item the {\bf left and right unit laws}:
\begin{eqnarray}
\xy
  (-24,5)*+{\mathbf{I} \boxtimes \mathbf{C}}="L";
  (0,5)*+{\mathbf{C} \boxtimes \mathbf{C}}="M";
  (24,5)*+{\mathbf{C} \boxtimes \mathbf{I}}="R";
  (0,-12)*+{\mathbf{C} }="B";
  (-5,2)*{}="TL";
  (5,2)*{}="TR";
  (10,-4)*{}="BR";
  (-10,-4)*{}="BL";
     {\ar^{\mathbb{I} \boxtimes \id} "L";"M"};
     {\ar_{\id \boxtimes \mathbb{I}} "R";"M"};
     {\ar_>>>>>>>{\mathbf{m}} "M";"B"};
     {\ar_{=} "L";"B"};
     {\ar^{=} "R";"B"};
         {\ar@{=>}_{\ell} "TL";"BL"};
         {\ar@{=>}^{r} "TR";"BR"};
\endxy \label{diagram left and right unit laws}
\end{eqnarray}
\end{itemize}
such that the following diagrams commute:
\begin{itemize}
 \item the {\bf pentagon identity} for the associator:
\begin{equation} \label{diagram pentagon}
\def\objectstyle{\scriptstyle}
\def\labelstyle{\scriptstyle}
\xy
 (-35,0)*+{\mathbf{m} \circ (\mathbf{m} \boxtimes {\rm id} ) \circ (\mathbf{m} \boxtimes {\rm id}  \boxtimes {\rm id}) }="TL";
 (-25,-15)*+{ \mathbf{m} \circ (\mathbf{m} \boxtimes {\rm id} ) \circ ({\rm id}  \boxtimes \mathbf{m}  \boxtimes {\rm id})}="BL";
 (35,0)*+{\mathbf{m} \circ ({\rm id} \boxtimes \mathbf{m} ) \circ ({\rm id}  \boxtimes {\rm id} \boxtimes  \mathbf{m})}="TR";
 (25,-15)*+{  \mathbf{m} \circ ({\rm id} \boxtimes \mathbf{m} ) \circ ({\rm id}  \boxtimes \mathbf{m}  \boxtimes {\rm id})}="BR";
 (0,15)*+{\mathbf{m} \circ (\mathbf{m} \boxtimes \mathbf{m})}="T";
     {\ar@{=>}_{\mathbf{m}  \circ (a \boxtimes {\rm id} ) } "TL";"BL"};
     {\ar@{=>}^{a \circ ( \mathbf{m} \boxtimes {\rm id} \boxtimes {\rm id})} "TL";"T"};
     {\ar@{=>}_{a \circ ({\rm id}  \boxtimes \mathbf{m}  \boxtimes {\rm id})} "BL";"BR"};
     {\ar@{=>}_{\mathbf{m}  \circ ({\rm id} \boxtimes a )} "BR";"TR"};
     {\ar@{=>}^{a \circ ({\rm id} \boxtimes {\rm id} \boxtimes \mathbf{m})} "T";"TR"};
\endxy
\end{equation}
where we use the equalities \begin{eqnarray*}({\rm id} \boxtimes
\mathbf{m} ) \circ (\mathbf{m} \boxtimes {\rm id}  \boxtimes {\rm
id})&=&\mathbf{m} \boxtimes \mathbf{m}\\
(\mathbf{m} \boxtimes {\rm id} ) \circ ( {\rm id} \boxtimes {\rm
id} \boxtimes \mathbf{m})&=&\mathbf{m} \boxtimes \mathbf{m}
\end{eqnarray*} in order to compose the upper 1-cells $a \circ ( \mathbf{m} \boxtimes {\rm id} \boxtimes {\rm
id})$ and $a \circ ({\rm id} \boxtimes {\rm id} \boxtimes
\mathbf{m})$, equalities which follow from the fact that
$\boxtimes$ is a 2-functor.

 \item the {\bf triangle identity} for the left and right
unit laws:
\begin{equation} \label{diagram triangle identity}
\def\objectstyle{\scriptstyle}
\def\labelstyle{\scriptstyle}
\xymatrix{
    \mathbf{m} \circ (\mathbf{m} \boxtimes {\rm id}) \circ ({\rm
  id} \boxtimes \mathbb{I} \boxtimes {\rm id})
      \ar@{=>}[rr]^{a \circ ({\rm
  id} \boxtimes \mathbb{I} \boxtimes {\rm id})}
      \ar@{=>}[dr]_{\mathbf{m} \circ (r \boxtimes {\rm id}) }
  &&  \mathbf{m} \circ ({\rm id} \boxtimes \mathbf{m}) \circ ({\rm
  id} \boxtimes \mathbb{I} \boxtimes {\rm id})
      \ar@{=>}[dl]^{\mathbf{m} \circ ({\rm id} \boxtimes \ell)}     \\
  & \mathbf{m}   }
\end{equation}
where we use the fact that $\mathbf{C} \boxtimes \mathbb{I} =
\mathbf{C} = \mathbb{I} \boxtimes \mathbf{C}$ to make sense of the
diagonal arrows.

\end{itemize}
\end{definition}
\begin{rem}
 The 2-category of $G$-sets with twist
has a 2-monoidal structure, where the product of two objects
$(X,\alpha_X)$, $(Y,\alpha_Y)$ is given by
$$(X,\alpha_X)\boxtimes (Y,\alpha_Y)=(X\times Y, \alpha_X\boxtimes \alpha_Y),$$
where $X\times Y$ is a $G$-set with the diagonal $G$-action and
$\alpha_X\boxtimes \alpha_Y:=\pi_1^*(\alpha_X) \pi_2^*(\alpha_Y)$. In
an analogous way we construct the product $\boxtimes$ for 1-cells
and 2-cells. A unit object is any fixed $G$-set with one element and the constant function 1
for 2-cocycle.
\end{rem}

\subsection{Pseudomonoids in the 2-category of $G$-sets with twist} We are interested in studying pseudomonoids in the
2-category of $G$-sets with twists, but in order to get normalized
cocycles we are forced to consider only pseudomonoids where the
{\it identity-assigning} 1-cell is {\bf strict} in the sense that
the cochain (the second component of the 1-cell) is trivial, and
furthermore, that the unit constraints in diagram \eqref{diagram
left and right unit laws} are identities, namely that the diagram
\eqref{diagram left and right unit laws} commutes. We will  call
these {\bf pseudomonoids with strict unit}.

\begin{proposition} \label{proposition pseudomonoid with strict unit 3-cocycle}
A pseudomonoid with strict unit in the 2-category of $G$-sets with
twists is equivalent to the following data:
\begin{itemize}[leftmargin=*]
  \item A monoid $(K,m,1)$, where $K$ is a $G$-set, $m$ is
  $G$-equivariant and $1\in K$ is a $G$-invariant element.
  \item $\alpha \in C^{2,1}(K \rtimes G,\Tt)$, $\beta\in C^{1,2}(K\rtimes G,\Tt)$,
  $\theta \in C^{0,3}(K\rtimes G, \Tt)$ such that $\alpha
  \oplus \beta \oplus \theta$ is a three cocycle in $\left( {\rm{Tot}}({A}^{*,*}(K \rtimes G, \Tt)),
\delta_G \oplus  \delta_K^{{(-1)}^{p}} \right)$ with $A^{*,*}(K \rtimes G, \Tt)$ the double complex introduced in Definition \ref{def:complex_A}.
\end{itemize}
\end{proposition}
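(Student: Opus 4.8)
The plan is to unwind the definition of a pseudomonoid with strict unit in the $2$-category of $G$-sets with twist, component by component, and match each piece of structure (and each coherence axiom) against the cochain data $\alpha\in C^{2,1}$, $\beta\in C^{1,2}$, $\theta\in C^{0,3}$ and the single cocycle equation in $\mathrm{Tot}(A^{*,*}(K\rtimes G,\Tt))$. First I would record what the underlying object $\mathbf{C}=(K,\alpha)$ gives: $K$ is a $G$-set and $\alpha\in Z^2_G(K,\Tt)$, which by the remark in the definition of the $2$-category is exactly the statement $\delta_G\alpha=1$ for $\alpha$ viewed in $C^{2,1}(K\rtimes G,\Tt)$. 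Next, the multiplication $1$-cell $\mathbf{m}:(K\times K,\alpha\boxtimes\alpha)\to(K,\alpha)$ consists of a $G$-map $m:K\times K\to K$ together with a cochain whose $\delta_G$-coboundary is $m^*\alpha\cdot(\alpha\boxtimes\alpha)^{-1}$; I would show that on the normalized sub-double-complex this cochain is precisely an element $\beta\in C^{1,2}(K\rtimes G,\Tt)$ and that the $1$-cell condition becomes one of the mixed equations $\delta_G\beta=(\text{something involving }\delta_K\alpha)$. The strictness of the identity-assigning $1$-cell $\mathbb I:\mathbf I\to\mathbf C$ (trivial cochain, and the unit constraints $\ell,r$ are identities) is what forces $m$ to be a genuine monoid multiplication with $G$-invariant unit $1\in K$, and forces the normalization conditions on $\beta$ and $\theta$; this is the reason we restrict to pseudomonoids with strict unit.

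The core of the argument is then a bookkeeping identification: the associator $a$ in diagram \eqref{diagram associator} is a $2$-cell between two $1$-cells $(K\times K\times K)\to K$, hence a $0$-cochain on $K\times K\times K$ in the $G$-equivariant sense, i.e. an element $\theta\in C^{0,3}(K\rtimes G,\Tt)$; its defining property $\delta_G(\theta)=(\text{ratio of the two composite cochains})$ unwinds to the second mixed equation relating $\delta_G\theta$, $\beta$, and the associativity of $m$. Finally the pentagon identity \eqref{diagram pentagon} is an equality of $2$-cells, i.e. an equality of $0$-cochains on $K^{\times 4}$; writing it out gives exactly $\delta_K\theta=1$ in $C^{0,4}$, the top mixed equation. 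Assembling $\delta_G\alpha=1$, the $\beta$-equation, the $\theta$-equation, and $\delta_K\theta=1$, and comparing with the explicit total differential $\delta_G\oplus\delta_K^{(-1)^p}$ of $\mathrm{Tot}^*(A^{*,*}(K\rtimes G,\Tt))$ written out in degree $3$ (so on the summands $C^{2,1}\oplus C^{1,2}\oplus C^{0,3}$), one sees that the system of four equations is literally the single equation that $\alpha\oplus\beta\oplus\theta$ is a $3$-cocycle. The triangle identity \eqref{diagram triangle identity} and the remaining unit constraints should be shown to be automatically satisfied, or vacuous, once the unit is strict and the cochains are normalized — this is a compatibility check that must be done but carries no new content.

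For the converse direction I would start from a normalized $3$-cocycle $\alpha\oplus\beta\oplus\theta$ in $\mathrm{Tot}^*(A^{*,*}(K\rtimes G,\Tt))$ together with a $G$-monoid structure $(K,m,1)$, and read the above dictionary backwards: the vanishing of the $(2,1)$-component of the differential gives $\alpha\in Z^2_G(K,\Tt)$ so $\mathbf{C}=(K,\alpha)$ is a legitimate object; the $(1,2)$-component equation makes $(m,\beta)$ into a legitimate $1$-cell $\mathbf{C}\boxtimes\mathbf{C}\to\mathbf{C}$; the $(0,3)$-component makes $\theta$ a legitimate $2$-cell realizing the associator; and the $(0,4)$-component, which is $\delta_K\theta=1$, is exactly the pentagon. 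One then checks that $\mathbb I:\mathbf I\to\mathbf C$ picking out $1\in K$ with trivial cochain is a strict-unit $1$-cell and that the triangle identity holds for the trivial unit constraints — again a normalization check. Both directions together, plus the observation that the two assignments are mutually inverse on the nose (there is no choice involved once we demand strict unit and normalized cochains), give the claimed equivalence.

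The step I expect to be the main obstacle is getting the signs and the direction of the coboundary relations consistent: the $2$-category's $1$-cell and $2$-cell conditions are stated multiplicatively with $\delta_G$ only, whereas the total complex uses $\delta_G\oplus\delta_K^{(-1)^p}$, and the identification requires that the ``defect'' in each $2$-category axiom is measured by the appropriate $\delta_K$-term with the correct sign $(-1)^p$. Pinning down that the associator pentagon gives $\delta_K\theta=1$ rather than some twisted variant, and that the $\beta$-equation pairs a $\delta_G\beta$ against a $\delta_K\alpha$ with matching signs, is where all the care is needed; the rest is formal unwinding of definitions.
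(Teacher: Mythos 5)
Your proposal is correct and takes essentially the same route as the paper's proof: unwind the object, the multiplication $1$-cell, the strict identity-assigning $1$-cell, the associator, the pentagon and the triangle into the equations $\delta_G\alpha=1$, $\delta_G\beta\cdot\delta_K\alpha=1$, $\delta_G\theta\cdot(\delta_K\beta)^{-1}=1$, $\delta_K\theta=1$ together with the normalization conditions, and observe that these are exactly the components of the cocycle condition for $\alpha\oplus\beta\oplus\theta$ in ${\rm{Tot}}^*(A^{*,*}(K\rtimes G,\Tt))$, with the converse obtained by reading the dictionary backwards. One small correction: the triangle identity is not vacuous --- in the paper it yields the normalization $\theta[k_1|1_K|k_2]=1$, which combined with $\delta_K\theta=1$ is what forces $\theta$ to be normalized in all variables and hence to lie in the normalized complex $C^{0,3}(K\rtimes G,\Tt)$.
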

\begin{proof}
 A pseudomonoid in the 2-category of $G$-sets is:
\begin{itemize}[leftmargin=*]
 \item[{\it i})]  An object
$\mathbf{C}=(K, \alpha)$ where $K$ is a $G$-set and
$$\alpha:G \times G \times K \to \Tt$$ such that $\alpha$ is
normalized in the components of the group $G$ and that $\delta_G
\alpha = 1$.
 \item[{\it ii})] A multiplication 1-cell $$\mathbf{m}=(m,\beta): (K \times
 K, \alpha \boxtimes \alpha) \to (K,\alpha)$$
 such that $m : K \times K \to K$ is a $G$-equivariant map, and a
 map
 $$\beta: G \times K \times K \to \Tt$$
 satisfying the equation
 \begin{eqnarray}
\label{equation for beta} \delta_G \beta = m^*\alpha \cdot (\alpha
\boxtimes \alpha)^{-1}.
 \end{eqnarray}
\item[{\it iii})] An identity-assigning 1-cell
$$\mathbb{I}=(\mathbf{1}_K, \gamma): (\{*\},1) \to (K, \alpha)$$
where $\mathbf{1}_K : \{*\} \to K$ is a map choosing a
$G$-invariant element $1_K: = \mathbf{1}_K(*)$ in $K$, and
$\gamma:  G \times \{*\} \to \Tt$ is the constant map 1 because we
are only considering pseudomonoids with strict unit. The cochain
condition on $\gamma$ reads
$$(\delta_G\gamma)[g_1|g_2||*]=\alpha[g_1|g_2||1_K]$$
and since $\gamma$ is the constant function it follows that
$\alpha[g_1|g_2||1_K]=1$, namely that $\alpha$ is normalized in the
$K$ variable.
 \item[{\it iv})] The left hand side of diagram \eqref{diagram
left and right unit laws} translates to the diagram
\begin{eqnarray*}
  \xymatrix{ (\{*\} \times K, 1 \boxtimes \alpha) \ar[rrd]_{(\pi_2,1)} \ar[rr]^{(1_K
  \times {\rm{id}}, \gamma \boxtimes 1)}  &&(K \times K, \alpha \boxtimes
  \alpha) \ar[d]^{(m,\beta)} \\
  && (K,\alpha)
  }
\end{eqnarray*}
where $\pi_2$ is the projection on the second component. The
composition of the 1 at the level of the $G$-sets implies the
equation
$$m(1_K,h) = h$$
for any element $h \in K$. Now, the composition of the 1-cells at
the level of the cochains implies the equation
$$ \gamma[g|| *] \cdot \beta[g||1_K|h]=1$$
with $g \in G$ and $h \in K$.
 Since the cochain $\gamma$ is
equal to the constant function 1, we have that $\beta[g||1_K|h]=1$.
Applying the same arguments as above we conclude that
$\beta[g||h|1_K]=1$ and therefore the left and right unit laws
imply that $\beta$ is normalized on the $K$ components, and
moreover that $1_K$ is a unit for the multiplication map $m$.

\item[{\it v})] Diagram \eqref{diagram associator} translates to
the diagram
\begin{eqnarray*}
  \xymatrix{ & (K \times K \times K, \alpha \boxtimes \alpha \boxtimes
   \alpha)  \ar[dl]_{(m \times {\rm id}, \beta \boxtimes 1)} \ar[dr]^{( {\rm id} \times m, 1 \boxtimes \beta)}   &\\
   (K \times K, \alpha \boxtimes
   \alpha) \ar[rd]_{(m, \beta)} \ar@{=>}[rr]^{\theta} && (K \times K,  \alpha \boxtimes
   \alpha) \ar[ld]^{(m, \beta)} \\
    &(K, \alpha) & }
\end{eqnarray*}
whose commutativity at the level of $G$-sets implies that the
multiplication $m:K \times K \to K$ is associative, and at the
level of cochains the diagram implies  the equation
\begin{align} \label{equation relating theta beta}
(\delta_G\theta)[g||h_1|h_2|h_3] = & \\
 \ \beta[g||h_2|h_3] \ & \beta[g||
h_1h_2|h_3]^{-1}\ \beta[g||h_1|h_2h_3] \ \beta[g||h_1|h_2]^{-1}. \nonumber
\end{align}
\item[{\it vi})] Diagram \eqref{diagram pentagon} translates into
the equation
\begin{align} \label{cocycle eqn theta}
\theta[k_1k_2|k_3|k_4] \ \theta[k_1|k_2|k_3k_4] =
\theta[k_1|k_2|k_3] \ \theta[k_1|k_2k_3|k_4] \ \theta[k_2|k_3|k_4]
\end{align}
for all elements $k_1,k_2,k_3,k_4$ in $K$.

\item[{\it vii})] Diagram \eqref{diagram triangle identity}
translates into the equality
\begin{equation} \label{normalization middle theta}
\theta[k_1| 1_K| k_2] = 1
\end{equation}
for all $k_1,k_2 \in K$, because the unit constraints are trivial.
\end{itemize}

From {\it ii)} we have that the multiplication map $m:K\times K
\to K$ is a $G$-equivariant map and {\it v)} tells us that the
multiplication $m$ is associative. From {\it iii)} we know that
$K$ is provided with a $G$ invariant element $1_K$ and {\it iv)}
tells us that this element is a left and right unit for the
multiplication $m$. We have then that $(K,m,1_K)$ is a monoid
endowed with a $G$-action compatible with $m$ and $1_K$.

Since $K$ is a $G$-equivariant monoid with unit, we can use the
notation of Definition \ref{definition double complex} to see that
equation \eqref{equation for beta} can be written as
\[\delta_G(\beta)  \delta_K(\alpha)=1,\] equation
\eqref{equation relating theta beta} becomes
\[ \delta_G(\theta)  \delta_K(\beta)^{-1}=1 \]
 and equation \eqref{cocycle eqn theta} becomes
\[  \delta_K(\theta)=1. \]
We furthermore have that $\delta_G \alpha =1$ by the definition of
an object in the category of  $G$-sets with twist.

 Equation \eqref{normalization middle theta} implies
that $\theta$ is normalized in the variable of the middle; this
fact, together with the fact that $\delta_2 \theta =1$ implies
that $\theta$ is normalized in all the variables since
\[ 1=\delta_K\theta [1_K|1_K|k_1|k_2] = \theta[1_K|k_1|k_2] \]
\[ 1=\delta_K\theta [k_1|k_2|1_K|1_K] = \theta[k_1|k_2|1_K]. \]
From {\it iii)} we know that $\alpha$ is normalized in the $K$
coordinate, and from {\it iv)} we know that $\beta$ is normalized
in the $K$ coordinates. Therefore the maps $\alpha$, $\beta$ and
$\theta$ are normalized in all variables since their normalization
on coordinates of $G$ follow from the
  definition of the 2-category of  $G$-sets with twists.

Summarizing we have that $\alpha \in C^{2,1}(K \rtimes G,\Tt)$,
$\beta \in C^{1,2}(K\rtimes G,\Tt)$ and $\theta \in
C^{0,3}(K\rtimes G, \Tt)$ such that $\alpha
  \oplus \beta \oplus \theta$ is a three cocycle in $\left( {\rm{Tot}}({A}^{*,*}(K \rtimes G, \Tt)),
\delta_G \oplus  \delta_K^{(-1)^{p}} \right)$ because we have that
\begin{align*}
  (\delta_G \oplus \delta_K^{(-1)^{p}})(\alpha
  \oplus \beta \oplus \theta) = & \delta_G (\alpha) \oplus \delta_K
  (\alpha) \delta_G( \beta) \oplus \delta_K( \beta)^{-1} \delta_G
  (\theta) \oplus \delta_K (\theta) \\
  =& 1 \oplus 1 \oplus 1 \oplus 1,
\end{align*} or in a diagram

\begin{tikzpicture}
\matrix [matrix of math nodes,row sep=5mm]
{
 4 &  [5mm]  |(a)| 1  & [5mm]   & [5mm]  & [5mm] & [5mm] \\
3 & |(b)| \theta & |(c)| 1  &  & & \\
2&   & |(d)| \beta & |(e)| 1&  & \\
1& &  & |(f)| \alpha & |(g)|1 \\
& 0 & 1 & 2 & 3&\\
};
\tikzstyle{every node}=[midway,auto,font=\scriptsize]
\draw[thick] (-1.8,-1.7) -- (-1.8,2.2) ;
\draw[thick] (-1.8,-1.7) -- (2.2,-1.7) ;
\draw[-stealth] (b) -- node {$\delta_K$} (a);
\draw[-stealth] (b) -- node {$\delta_G$} (c);
\draw[-stealth] (d) -- node {$(\delta_K)^{-1}$} (c);
\draw[-stealth] (d) -- node {$\delta_G$} (e);
\draw[-stealth] (f) -- node {$\delta_K$} (e);
\draw[-stealth] (f) -- node {$\delta_G$} (g);
\end{tikzpicture}

From the construction above, it is easy to see that if we are
given the $G$ equivariant monoid $(K,m,1_K)$ with unit plus the
cocycle $\alpha \oplus \beta \oplus \theta$, then we can
construct in a unique way a pseudomonoid with strict unit in the
2-category of $G$-sets with twists. This finishes the
proof.\end{proof}

\begin{rem} For a fixed $G$ equivariant monoid $(K,m,1_K)$, the possible 
pseudomonoid structures with strict unit in the
2-category of $G$-sets associated over $(K,m,1_K)$ are classified by elements in
$$Z^3({\rm{Tot}}({A}^{*,*}(K \rtimes G, \Tt))),$$
namely, 3-cocycles in the total complex of ${A}^{*,*}(K \rtimes G, \Tt)$.
\end{rem}

\begin{definition} \label{definition morphism of pseudomonoids}
Given pseudomonoids $(\mathbf{C},\mathbf{m},\mathbb{I},a)$ and
$(\mathbf{C}',\mathbf{m}',\mathbb{I}',a')$ in a 2-category
$\mathcal B$, a {\bf morphism}
$$\mathcal{F}:(\mathbf{C},\mathbf{m},\mathbb{I},a) \To
(\mathbf{C}',\mathbf{m}',\mathbb{I}',a')$$ consists of:
\begin{itemize}
\item a 1-cell $\mathbb{F} : \mathbf{C} \to\mathbf{ C'}$
\end{itemize}
equipped with:
\begin{itemize}
\item a 2-isomorphism
\[
\xy
 (0,15)*+{\mathbf{C} \boxtimes \mathbf{C}}="T";
 (-15,0)*+{\mathbf{C'} \boxtimes \mathbf{C'}}="L";
 (15,0)*+{\mathbf{C}}="R";
 (0,-15)*+{\mathbf{C'}}="B";
 (-7,0)*{}="ML";
 (7,0)*{}="MR";
     {\ar^{\mathbf{m}} "T";"R"};
     {\ar_{\mathbb{F} \boxtimes \mathbb{F}} "T";"L"};
     {\ar^{\mathbb{F}} "R";"B"};
     {\ar_{\mathbf{m'}} "L";"B"};
         {\ar@{=>}^{F_2} "ML";"MR"};
\endxy
\]
\item a 2-isomorphism
\begin{eqnarray} \label{2-isomorphism condition for morphism in
pseudomoids}
 \xy
 (-12,0)*+{\mathbf{C}}="L";
 (12,0)*+{\mathbf{C'}}="R";
 (0,16)*+{\mathbf{I}}="T";
    {\ar_{\mathbb I} "T";"L"};
    {\ar^{\mathbb I'} "T";"R"};
    {\ar_{\mathbb{F}} "L";"R"};
    {\ar@{=>}^{F_0} (-7,2);(4,7)}
 \endxy
\end{eqnarray}
\end{itemize}
such that diagrams expressing the following laws commute:
\begin{itemize}
\item compatibility of $F_2$ with the associator:
\[ \makebox[0em]{
 \xy
   (-55,10)*+{\mathbf{m'}\circ(\mathbf{m'} \boxtimes \id)\circ(\mathbb{F} \boxtimes \mathbb{F} \boxtimes \mathbb{F})}="tl";
    (0,10)*+{\mathbf{m'}\circ(\mathbb{F} \boxtimes \mathbb{F})\circ(\mathbf{m} \boxtimes \id)}="tm";
    (45,10)*+{\mathbb{F}\circ\mathbf{m}\circ(\mathbf{m} \boxtimes \id)}="tr";
    (-55,-10)*+{\mathbf{m'}\circ(\id \boxtimes \mathbf{m'})\circ(\mathbb{F} \boxtimes \mathbb{F} \boxtimes \mathbb{F})}="bl";
    (0,-10)*+{\mathbf{m'}\circ(\mathbb{F} \boxtimes \mathbb{F})\circ(\id \boxtimes \mathbf{m})}="bm";
    (45,-10)*+{\mathbb{F}\circ\mathbf{m}\circ(\id \boxtimes \mathbf{m})}="br";
        {\ar@{=>}^>>>>>>>>>>>{  \mathbf{m'}\circ (F_2 \boxtimes F)} "tl";"tm"};
        {\ar@{=>}^>>>>>>>>>>>{ F_2\circ (\mathbf{m} \boxtimes \id)} "tm";"tr"};
        {\ar@{=>}^{\mathbb{F} \circ a} "tr";"br"};
        {\ar@{=>}_{ a'\circ (\mathbb{F} \boxtimes \mathbb{F} \boxtimes \mathbb{F})} "tl";"bl"};
        {\ar@{=>}_>>>>>>>>>>>{ \mathbf{m'}\circ (\mathbb{F} \boxtimes F_2)} "bl";"bm"};
        {\ar@{=>}_>>>>>>>>>>>{ F_2\circ (\id \boxtimes \mathbf{m})} "bm";"br"};
 \endxy }
\]

\item compatibility of $F_0$ with the left unit law:
\[
 \xy
 (-25,8)*+{\mathbf{m'}\circ(\mathbb{I}' \boxtimes \mathbb{F})}="tl";
 (20,8)*+{\mathbb{F}}="tr";
 (-25,-8)*+{\mathbf{m'}\circ(\mathbb{F} \boxtimes \mathbb{F})\circ(\mathbb{I} \boxtimes \id)}="bl";
 (20,-8)*+{\mathbb{F} \circ\mathbf{m}\circ (\mathbb{I} \boxtimes \id)}="br";
    {\ar@{=>}^{ \ell'\circ \mathbb{F}} "tl";"tr"};
    {\ar@{=>}_{\mathbb{F}\circ \ell} "br";"tr"};
    {\ar@{=>}_{ \mathbf{m'}\circ (F_0 \boxtimes \mathbb{F})} "bl";"tl"};
    {\ar@{=>}_{  F_2 \circ(\mathbb{I} \boxtimes \id)} "bl";"br"};
 \endxy
\]

\item compatibility of $F_0$ with the right unit law:
\[
 \xy
 (-25,8)*+{\mathbf{m'}\circ(\mathbb{F} \boxtimes \mathbb I')}="tl";
 (20,8)*+{\mathbb{F}}="tr";
 (-25,-8)*+{ \mathbf{m'}\circ(\mathbb{F} \boxtimes \mathbb{F})\circ(\id \boxtimes \mathbb I)}="bl";
 (20,-8)*+{\mathbb{F}\circ\mathbf{m}\circ (\id \boxtimes \mathbb I)}="br";
    {\ar@{=>}^{ \mathbb{F}\circ r'} "tl";"tr"};
    {\ar@{=>}_{  \mathbb{F} \circ r}"br";"tr"};
    {\ar@{=>}_{  \mathbf{m}'\circ (\mathbb{F} \boxtimes F_0)} "bl";"tl"};
    {\ar@{=>}_{ F_2\circ (\id \boxtimes \mathbb I)} "bl";"br"};
 \endxy
\]
\end{itemize}
\end{definition}

\begin{definition}
Given two pseudomonoids with strict unit
$\mathcal{K}=(K,m,1,\alpha,\beta,\theta)$ and
$\mathcal{K}'=(K',m',1',\alpha',\beta',\theta')$ in the 2-category
of $G$-sets with twists, a morphism $\mathcal{F} : \mathcal{K} \to
\mathcal{K}'$ is a morphism of pseudomonoids (as in Definition
\ref{definition morphism of pseudomonoids}) such that the
2-isomorphism $F_0$ of diagram \eqref{2-isomorphism condition for
morphism in pseudomoids} is an identity.
\end{definition}

\begin{proposition} \label{proposition morphism of pseudomonoids with strict unit}
A morphism of pseudomonoids with strict unit in the 2-category of
$G$-sets with twists $\mathcal{F} : \mathcal{K} \to \mathcal{K}'$
 consists of the triple $\mathcal{F}=(F, \chi, \kappa)$ with
 $$F: K \to K'$$
 a $G$-equivariant morphism of monoids, and cochains $\chi \in C^{1,1}(K
 \rtimes G, \Tt)$ and $\kappa \in C^{0,2}(K
 \rtimes G, \Tt)$ such that
\[ (\delta_G \oplus \delta_K^{(-1)^{p}})( \chi \oplus \kappa^{-1}) =
F^*\alpha'/\alpha \oplus F^*\beta'/\beta \oplus F^*\theta'/\theta.
\]
\end{proposition}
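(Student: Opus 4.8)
The plan is to unwind Definition \ref{definition morphism of pseudomonoids} term by term in the concrete $2$-category of $G$-sets with twists, exactly as was done for the objects themselves in the proof of Proposition \ref{proposition pseudomonoid with strict unit 3-cocycle}. By hypothesis $\mathcal{F}$ is a $1$-cell $\mathbb{F}$, a $2$-isomorphism $F_2$, and a $2$-isomorphism $F_0$ which we take to be the identity. First I would note that a $1$-cell $\mathbb{F}:(K,\alpha)\to(K',\alpha')$ is a pair $(F,\chi)$ with $F:K\to K'$ a morphism of $G$-sets and $\chi$ a normalized $1$-cochain satisfying $\delta_G\chi=F^*\alpha'\cdot\alpha^{-1}$; reading this inside $C^{1,1}(K\rtimes G,\Tt)$ already gives the first of the three claimed components. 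Next, the $2$-cell $F_2:\mathbf{m}'\circ(\mathbb{F}\boxtimes\mathbb{F})\Rightarrow\mathbb{F}\circ\mathbf{m}$ can only exist if the two $1$-cells share the same underlying map, which forces $F\circ m=m'\circ(F\times F)$; invoking the existence of $F_0$ additionally forces $F(1_K)=1_{K'}$, so $F$ is a homomorphism of $G$-equivariant monoids. Writing $\kappa:=F_2$ as a map $K\times K\to\Tt$ and using the composition rule $(G,\beta_G)\circ(F,\beta_F)=(G\circ F,F^*(\beta_G)\beta_F)$, the source and target cochains of $F_2$ compute to $F^*\beta'\cdot(\chi\boxtimes\chi)$ and $m^*\chi\cdot\beta$, so the $2$-cell condition reads $\delta_G\kappa=m^*\chi\cdot\beta\cdot(\chi\boxtimes\chi)^{-1}\cdot(F^*\beta')^{-1}$. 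A one-line check in bar notation gives $m^*\chi\cdot(\chi\boxtimes\chi)^{-1}=(\delta_K\chi)^{-1}$, whence $(\delta_K\chi)^{-1}\cdot\delta_G(\kappa^{-1})=F^*\beta'\cdot\beta^{-1}$, which is the second claimed component.

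For the remaining component I would translate the hexagon expressing compatibility of $F_2$ with the associators into cochain form. The relevant bookkeeping facts are: whiskering a $2$-cell on the left by a $1$-cell leaves its underlying function unchanged, whiskering on the right precomposes it with the underlying map, the $\boxtimes$-product of two $2$-cells multiplies their pullbacks along the two projections, and composing $2$-cells simply multiplies their functions. Applying this to the six edges of the hexagon, the two paths from the top vertex to the bottom vertex produce $\kappa[k_1|k_2]\,\kappa[k_1k_2|k_3]\,\theta[k_1|k_2|k_3]$ and $F^*\theta'[k_1|k_2|k_3]\,\kappa[k_2|k_3]\,\kappa[k_1|k_2k_3]$ as functions on $K^3$; equating them and solving for $F^*\theta'/\theta$ yields exactly $\delta_K(\kappa^{-1})$, the third component. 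Finally, strictness of the unit constraints together with triviality of $F_0$ and its two compatibility diagrams with the unit laws forces $\chi$ and $\kappa$ to be normalized in the $K$-variables (the functions $\chi[g||1_K]$, $\kappa[1_K|k]$ and $\kappa[k|1_K]$ are the identity functions of the relevant trivial $2$-cells), so $\chi\in C^{1,1}$ and $\kappa\in C^{0,2}$. Collecting the three components gives $(\delta_G\oplus\delta_K^{(-1)^p})(\chi\oplus\kappa^{-1})=F^*\alpha'/\alpha\oplus F^*\beta'/\beta\oplus F^*\theta'/\theta$. Conversely, given such a triple $(F,\chi,\kappa)$ one reads these identifications backwards to reconstruct $\mathbb{F}$, $F_2$ and the identity $F_0$, and checks that all coherence diagrams of Definition \ref{definition morphism of pseudomonoids} become equivalent to the displayed relation, establishing the claimed equivalence.

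The main obstacle is the sign-and-inverse bookkeeping: one must track the $(-1)^p$-twist of $\delta_K$ in the total complex, the pullback cochains introduced when composing $1$-cells, and the left-versus-right distinction in the whiskerings, and verify that the single hexagon diagram for $F_2$ reproduces precisely the one relation $\delta_K(\kappa^{-1})=F^*\theta'/\theta$ and nothing more. Once these translations are set up cleanly, as in the proof of Proposition \ref{proposition pseudomonoid with strict unit 3-cocycle}, the rest is a routine verification.
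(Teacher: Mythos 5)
Your proposal is correct and follows essentially the same route as the paper: unwinding Definition \ref{definition morphism of pseudomonoids} in the concrete 2-category, obtaining $\delta_G\chi=F^*\alpha'/\alpha$ from the 1-cell, $\delta_K(\chi)^{-1}\delta_G(\kappa)^{-1}=F^*\beta'/\beta$ from the 2-cell $F_2$, $\delta_K(\kappa)^{-1}=F^*\theta'/\theta$ from compatibility with the associators, and the $K$-normalizations of $\chi$ and $\kappa$ from strictness of $F_0$ and the unit laws. The cochain identifications and sign/inverse bookkeeping all match the paper's computation, so no gaps.
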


\begin{proof}
Following Definition \ref{definition morphism of pseudomonoids} a
morphism $\mathcal{F} : \mathcal{K} \to \mathcal{K}'$ consists of:

\begin{itemize}[leftmargin=*]
\item[{\it i)}] A 1-cell $(F, \chi) : (K, \alpha) \to (K', \alpha')$, i.e. a $G$-equivariant map
$F: K \to K'$ and a normalized cochain $\chi \in C^1_G(K, \Tt)$ such that
\begin{equation} \label{equation delta 1 chi} \delta_G \chi = F^*\alpha'/\alpha.\end{equation}
\item[{\it ii)}] A 2-cell $\kappa \in C^0_G(K \times K, \Tt)$
$$\xymatrix{(K \times K, \alpha  \boxtimes \alpha) \ar[r]^(0.6){(m, \beta)} \ar[d]_{(F \times F, \chi \boxtimes \chi)}
& (K, \alpha) \ar[d]^{(F, \chi)} \\
(K' \times K', \alpha'  \boxtimes \alpha') \ar[r]_(0.6){(m',\beta')} \ar@{=>}[ru]^{\kappa} &(K', \alpha')
}$$
such that $$\delta_G \kappa = (\beta \cdot m^* \chi) / ( \chi \boxtimes \chi \cdot F^*\beta').$$
Note that at the level of the $G$ sets the diagram is commutative, therefore $F : K \to K'$ preserves the multiplication;
and since $$\delta_K \chi = \chi \boxtimes \chi \cdot (m^* \chi)^{-1},$$ we can rewrite the equation above as:
\begin{eqnarray} \label{equation cocycle delta 1 kappa delta 2 chi}
\delta_G \kappa \cdot \delta_K \chi = \beta/F^* \beta'.
\end{eqnarray}

\item[{\it iii)}] The commutativity of the diagram \eqref{2-isomorphism condition for morphism in pseudomoids}
\begin{eqnarray*}
 \xy
 (-12,0)*+{{(K, \alpha)}}="L";
 (12,0)*+{{(K',\alpha')}}="R";
 (0,16)*+{{(\{*\},1)}}="T";
    {\ar_{\mathbb I} "T";"L"};
    {\ar^{\mathbb I'} "T";"R"};
    {\ar_{{(F,\chi)}} "L";"R"}
 \endxy
\end{eqnarray*} implies that the map $F:K \to K'$ preserves the unit and that $\chi(g,1)=1$ for any $g \in G$, namely that
$\chi$ is normalized in the $K$ variable and therefore we could say that $\chi \in C^{1,1}(K \rtimes G, \Tt)$.
\item[{\it iv)}] The commutativity of $\kappa$ with the associator. This implies that $F$ preserves the
associativity of the multiplications $m$ and $m'$, and that the following equation is satisfied
$$
\theta'[F(a_1)| F(a_2)| F(a_3)] \  \kappa[a_2|a_3]\ \kappa[a_1| a_2a_3]= \kappa[a_1|a_2]\ \kappa[a_1a_2|a_3]\
 \theta[a_1|a_2|a_3]
$$
for all $a_1,a_2,a_3 \in K$. Note that this last equation can be written as
\begin{eqnarray} \label{equation cocycle delta 2 kappa}
\delta_K\kappa = \theta/F^* \theta'.
\end{eqnarray}
\item[{\it v)}] Compatibility with the left and right units, but as the 2-cells $F_0, l,r, l'$ and $r'$ are identities,
then this implies that $\kappa[1|a]=1= \kappa[a|1]$ and therefore we have that $ \kappa$ is normalized in the $K$ variables
and we can assume that $\kappa \in C^{0,2}(K \rtimes G, \Tt)$.
\end{itemize}

From the previous arguments it follows that $F : K \to K'$ is a $G$-equivariant morphism of monoids, and moreover,
calculating the differential, we have that
\begin{eqnarray*}
  (\delta_G \oplus \delta_K^{(-1)^{p}})( \chi \oplus \kappa^{-1}) & = & \delta_G( \chi) \oplus
     \delta_K ( \chi)^{-1} \delta_G( \kappa)^{-1} \oplus \delta_K( \kappa)^{-1} \\
     & = & F^* \alpha' / \alpha \oplus F^* \beta' / \beta \oplus F^* \theta' / \theta,
\end{eqnarray*}
where the second equality follows from equations \eqref{equation
delta 1 chi}, \eqref{equation cocycle delta 1 kappa delta 2 chi}
and \eqref{equation cocycle delta 2 kappa}. In a diagram

 \begin{tikzpicture}
\matrix [matrix of math nodes,row sep=5mm]
{3 &  [2mm]   |(a)| F^* \theta' / \theta &  [2mm]  |(c)|    &   [2mm]   \\
2& |(b)| \kappa^{-1}  & |(c)| F^* \beta' / \beta& |(e)|    \\
1& &  |(d)| \chi & |(e)| F^* \alpha' / \alpha  \\
& 0 & 1 & 2 \\
};
\tikzstyle{every node}=[midway,auto,font=\scriptsize]
\draw[thick] (-2.1,-1.3) -- (-2.1,2.0) ;
\draw[thick] (-2.1,-1.3) -- (2.1,-1.3) ;
\draw[-stealth] (b) -- node {$\delta_K$} (a);
\draw[-stealth] (b) -- node {$\delta_G$} (c);
\draw[-stealth] (d) -- node {$(\delta_K)^{-1}$} (c);
\draw[-stealth] (d) -- node {$\delta_G$} (e);
\end{tikzpicture}

This finishes the proof.

\end{proof}

\begin{definition}
Given morphisms $\mathcal{F}= ( \mathbb{F},F_0,F_2)$ and $\mathcal{G}=(\mathbb{G}, G_0,G_2)$ from
 $ (\mathbf{C}, \mathbf{m}, \mathbb{I},a)$  to $ (\mathbf{C}', \mathbf{m}', \mathbb{I}',a')$
pseudomonoids in $\mathcal{B}$, a {\bf
2-morphism} $s:\mathcal{F} \to \mathcal{G}$ is a 2-cell  $s : \mathbb{F} \To \mathbb{G}$ in $\mathcal{B}$ such that the
following diagrams commute:
\begin{itemize}
\item compatibility with $F_2$ and $G_2$:
\begin{eqnarray} \label{diagram compatibility F2 G2}
\xymatrix{ \mathbf{m}' \circ(\mathbb{F} \boxtimes \mathbb{F})
  \ar@{=>}[rr]^{ \mathbf{m}'\circ (s \boxtimes s)}
  \ar@{=>}[d]_{F_2}
&& \mathbf{m}'\circ(\mathbb{G} \boxtimes \mathbb{G})
   \ar@{=>}[d]^{G_2}     \\
  \mathbb{F} \circ\mathbf{m}
   \ar@{=>}[rr]^{ s\circ \mathbf{m}}
&&  \mathbb{G} \circ \mathbf{m} }
\end{eqnarray}

\item compatibility with $F_0$ and $G_0$:
\begin{eqnarray} \label{diagram compatibility F0 G0}
 \xy
 (-12,0)*+{\mathbb{F} \circ\mathbb I}="L";
 (12,0)*+{\mathbb{G} \circ \mathbb I}="R";
 (0,14)*+{\mathbb I'}="T";
    {\ar@{=>}_{F_{0}} "L";"T"};
    {\ar@{=>}^{G_{0}} "R";"T"};
    {\ar@{=>}_{s\circ \mathbb I } "L";"R"};
 \endxy
\end{eqnarray}

\end{itemize}
\end{definition}

\begin{proposition} \label{proposition 2-morphism of pseudomonoids}
  Given morphisms of pseudomonoids with strict unit in the 2-category of
$G$-sets with twists $\mathcal{F} =(F, \chi, \kappa)$ and
$\mathcal{F}' =(F, \chi', \kappa')$, with $\mathcal{F}, \mathcal{F}': \mathcal{K} \to \mathcal{K'}$,
a 2-morphism $\gamma:\mathcal{F} \to \mathcal{F}'$ is a cochain $\gamma: C^{0,1}(K \rtimes G,\Tt)$
such that $$(\delta_G \oplus \delta_K^{(-1)^{p}}) (\gamma) = (\chi'/\chi, \kappa/\kappa').$$

\end{proposition}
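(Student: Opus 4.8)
The plan is to unwind the definition of a $2$-morphism of pseudomonoids in the concrete $2$-category of $G$-sets with twist, exactly as Proposition~\ref{proposition morphism of pseudomonoids with strict unit} unwound the definition of a morphism, and to read off the resulting cochain identities.

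First I would record the underlying datum. A $2$-cell in the $2$-category of $G$-sets with twist exists only between $1$-cells sharing the same underlying map of $G$-sets; hence a $2$-morphism $\gamma\colon\mathcal F\to\mathcal F'$ between morphisms of pseudomonoids forces $\mathcal F$ and $\mathcal F'$ to have the same underlying $G$-equivariant monoid morphism $F$ (this is why the statement assumes it), and $\gamma$ itself is a $2$-cell $(F,\chi)\Rightarrow(F,\chi')$, i.e.\ a $0$-cochain $\gamma\colon K\to\Tt$ satisfying $\delta_G\gamma=\chi'/\chi$. This is the $C^{1,1}$-component of the asserted identity.

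Next I would translate the compatibility square \eqref{diagram compatibility F2 G2}, with $F_2=\kappa$ and $G_2=\kappa'$ as identified in Proposition~\ref{proposition morphism of pseudomonoids with strict unit}. Using that whiskering a $2$-cell on the left leaves its underlying function unchanged, that whiskering on the right by a $1$-cell with underlying $G$-map $L$ pulls the function back along $L$, that the monoidal product $\gamma\boxtimes\gamma$ has underlying function $(a,b)\mapsto\gamma(a)\gamma(b)$, and that vertical composition of $2$-cells is pointwise multiplication, the square becomes the equality of functions $(m^*\gamma)\cdot\kappa=\kappa'\cdot(\gamma\boxtimes\gamma)$ on $K\times K$; evaluated at $(a,b)$ this reads $\gamma(ab)\,\kappa[a|b]=\kappa'[a|b]\,\gamma(a)\gamma(b)$, i.e.\ $\kappa/\kappa'=\delta_K\gamma$ by the formula for $\delta_K$ of Definition~\ref{definition double complex} with $p=0,\,q=1$. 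This is the $C^{0,2}$-component. Finally, since $\mathcal F$ and $\mathcal F'$ have strict unit, $F_0=G_0=\mathrm{id}$, so diagram \eqref{diagram compatibility F0 G0} collapses to $\gamma\circ\mathbb I=\mathrm{id}$; the underlying function of $\gamma\circ\mathbb I$ is $*\mapsto\gamma(1_K)$, whence $\gamma(1_K)=1$ and $\gamma\in C^{0,1}(K\rtimes G,\Tt)$.

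Assembling, and using that $\gamma$ lies in bidegree $(0,1)$ so that the $\delta_K$ appearing in the total differential carries the sign $(-1)^0=1$, we get $(\delta_G\oplus\delta_K^{(-1)^p})(\gamma)=(\delta_G\gamma,\delta_K\gamma)=(\chi'/\chi,\kappa/\kappa')$. For the converse, given any $\gamma\in C^{0,1}(K\rtimes G,\Tt)$ with this property, reading the three identities backwards yields a $2$-cell $(F,\chi)\Rightarrow(F,\chi')$ for which diagrams \eqref{diagram compatibility F2 G2} and \eqref{diagram compatibility F0 G0} commute, hence a $2$-morphism $\mathcal F\to\mathcal F'$; so the correspondence is a bijection. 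The one point requiring care is the orientation of the left/right whiskerings and the resulting placement of inverses in the square \eqref{diagram compatibility F2 G2}; once that is fixed the whole argument is a direct comparison of maps $K^q\to\Tt$, with no genuine obstacle, entirely parallel to the computations in the proof of Proposition~\ref{proposition morphism of pseudomonoids with strict unit}.
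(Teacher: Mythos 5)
Your proposal is correct and follows essentially the same route as the paper's proof: read off $\delta_G\gamma=\chi'/\chi$ from the underlying 2-cell, extract $\delta_K\gamma=\kappa/\kappa'$ from diagram \eqref{diagram compatibility F2 G2}, and get normalization of $\gamma$ from diagram \eqref{diagram compatibility F0 G0}. You simply spell out the whiskering computations and the converse direction, which the paper leaves implicit.
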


\begin{proof}
Let $\gamma: (F,\chi) \to (F, \chi')$ be the 2-cell defined by the 2-morphism, then we have that
$$ \delta_G (\gamma) = \chi'/\chi.$$
Now, by diagram \eqref{diagram compatibility F2 G2} we have that
$$\delta_K(\gamma)= \kappa/\kappa',$$
and by diagram \eqref{diagram compatibility F0 G0} we have that $\gamma$ is a normalized cochain.
\end{proof}

\begin{rem} \label{remark double complex A} Propositions \ref{proposition pseudomonoid with strict unit 3-cocycle},
\ref{proposition morphism of pseudomonoids with strict unit} and \ref{proposition 2-morphism of pseudomonoids}
 imply  that the relevant information encoded in cochains for the 2-category of pseudomonoids with strict unit
 of $G$-sets with twists, is
 given by the cochains of the total complex
$$\left( {\rm{Tot}}({A}^{*,*}(K \rtimes G, \Tt)),
\delta_G \oplus  \delta_K^{{(-1)}^{p}} \right)$$
\end{rem}

For a fixed $G$-equivariant monoid $(K,m,1_K)$, Proposition \ref{proposition
pseudomonoid with strict unit 3-cocycle} tells us that the
3-cocycles   $Z^3({\rm{Tot}}^*(A^{*,*}(K \rtimes G, \Tt)))$ are in one to one
correspondence with the set of possible pseudomonoid structures
with strict unit in the 2-category of $G$-sets with twist over
$K$. If we only consider invertible morphisms
of pseudomonoids as defined in Proposition \ref{proposition morphism of pseudomonoids with strict unit}, we may define a groupoid which encodes the equivalence classes of pseudomonoid structures over $K$. Let us be more explicit.

\subsection{Equivalence classes of pseudomonoid structures over a fixed monoid.}
\begin{definition}
Fix a $G$-equivariant monoid $(K,m,1_K)$. Define the groupoid ${\rm{Psdmn}}^G(K)$ whose set of objects
is $Z^3({\rm{Tot}}^*(A^{*,*}(K \rtimes G, \Tt)))$ and whose morphisms are invertible morphisms
of pseudomonoids as defined in Proposition \ref{proposition morphism of
pseudomonoids with strict unit}. The groupoid ${\rm{Psdmn}}^G(K)$  encodes the information of all pseudomonoid strucures
over $K$ and its coarse moduli space $|{\rm{Psdmn}}^G(K)|$, i.e. the set of equivalence classes defined by the morphisms, 
is the set of equivalence classes of pseudomonoid structures on $K$.
\end{definition}
Note that a morphism in ${\rm{Psdmn}}^G(K)$ consists of a triple $(F, \chi,\kappa)$ where  $F:K \to K$ must be a $G$-equivariant automorphism. If we denote by
\begin{align*}
{\rm{Aut}}_G(K) := \{ f \in {\rm{Aut}}(K) \colon f(gk) = gf(k) \text{ for all } g \in G\}
\end{align*}
then we have that the group ${\rm{Aut}}_G(K)$ is isomorphic to the subgroup of ${\rm{Aut}}(K \rtimes G)$ which leaves the $G$  fixed; 
for a $G$-equivariant automorphism $f \in {\rm{Aut}}_G(K)$ we can associate the automorphism $\overline{f} \in {\rm{Aut}}(K \rtimes G)$ by the
equation
$$\overline{f}(k,g) := (f(k),g).$$
Since the automorphism $\overline{f}$ leaves $G$ fixed, then the groups of automorphism act on the double complex $A^{*,*}(K \rtimes G , \integer)$; we claim
\begin{lemma}
The set  of equivalence classes of pseudomonoid structures on $K$ can be described by the quotient
$$|{\rm{Psdmn}}^G(K)| \cong H^3({\rm{Tot}}^*(A^{*,*}(K \rtimes G, \Tt))) /  {\rm{Aut}}_G(K).$$
\end{lemma}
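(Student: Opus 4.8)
The plan is to identify $|{\rm{Psdmn}}^G(K)|$, which by definition is the set of connected components of the groupoid ${\rm{Psdmn}}^G(K)$, with the orbit set of the natural ${\rm{Aut}}_G(K)$-action on $H^3({\rm{Tot}}^*(A^{*,*}(K\rtimes G,\Tt)))$. Throughout I would write $d=\delta_G\oplus\delta_K^{(-1)^p}$ for the total differential. The first thing to record is a bookkeeping fact about bidegrees: since $A^{*,*}=C^{*,*>0}$ has trivial $0$-th row (Definition \ref{def:complex_A}), one has ${\rm{Tot}}^2(A^{*,*}(K\rtimes G,\Tt))=C^{1,1}(K\rtimes G,\Tt)\oplus C^{0,2}(K\rtimes G,\Tt)$ and ${\rm{Tot}}^3(A^{*,*}(K\rtimes G,\Tt))=C^{2,1}\oplus C^{1,2}\oplus C^{0,3}$. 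By Proposition \ref{proposition pseudomonoid with strict unit 3-cocycle} the objects of ${\rm{Psdmn}}^G(K)$ are precisely the $3$-cocycles $z=\alpha\oplus\beta\oplus\theta\in Z^3({\rm{Tot}}^*(A^{*,*}(K\rtimes G,\Tt)))$.

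Next I would treat the morphisms that cover the identity automorphism of $K$. By Proposition \ref{proposition morphism of pseudomonoids with strict unit}, a morphism $({\rm{id}}_K,\chi,\kappa)$ from $\alpha\oplus\beta\oplus\theta$ to $\alpha'\oplus\beta'\oplus\theta'$ is exactly a pair $(\chi,\kappa)\in C^{1,1}\times C^{0,2}$ satisfying $d(\chi\oplus\kappa^{-1})=(\alpha'/\alpha)\oplus(\beta'/\beta)\oplus(\theta'/\theta)$; since $\chi\oplus\kappa^{-1}$ ranges over all of ${\rm{Tot}}^2(A^{*,*})$, this says precisely that $z$ and $z'$ are cohomologous. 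Hence on this wide subgroupoid the set of isomorphism classes is $Z^3/B^3=H^3({\rm{Tot}}^*(A^{*,*}(K\rtimes G,\Tt)))$, and the tautological map from the set of objects $Z^3$ to $|{\rm{Psdmn}}^G(K)|$ factors through $H^3$.

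Then I would bring in a general automorphism $F\in{\rm{Aut}}_G(K)$ together with the ${\rm{Aut}}_G(K)$-action. As noted before the statement, each $f\in{\rm{Aut}}_G(K)$ gives the automorphism $\overline{f}(k,g)=(f(k),g)$ of $K\rtimes G$ fixing $G$; one checks that the induced map on $C^{p,q}(K\rtimes G,\Tt)=\Maps(G^p\times K^q,\Tt)$ preserves bidegree, normalization and the vanishing of the $0$-th row, commutes with $\delta_G$ and $\delta_K$, and hence makes ${\rm{Aut}}_G(K)$ act by cochain endomorphisms on ${\rm{Tot}}^*(A^{*,*}(K\rtimes G,\Tt))$ and by automorphisms on $H^3$. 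With this in hand, Proposition \ref{proposition morphism of pseudomonoids with strict unit} reads: a morphism $(F,\chi,\kappa)\colon z\to z'$ exists iff $d(\chi\oplus\kappa^{-1})=\overline{F}^*z'/z$, i.e.\ iff $\overline{F}^*[z']=[z]$ in $H^3$. Since $F$ is invertible this relation is symmetric and transitive, so two objects $z,z'$ are isomorphic in ${\rm{Psdmn}}^G(K)$ iff their classes $[z],[z']$ lie in a common ${\rm{Aut}}_G(K)$-orbit; conversely, whenever $\overline{F}^*[z']=[z]$ the element $\overline{F}^*z'/z$ is a $d$-coboundary, which I split as $d(\chi\oplus\kappa^{-1})$ to produce an explicit morphism $(F,\chi,\kappa)\colon z\to z'$. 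Therefore the surjective map $Z^3\to H^3$ followed by the quotient $H^3\to H^3/{\rm{Aut}}_G(K)$ is constant on isomorphism classes and surjective, and descends to the asserted bijection $|{\rm{Psdmn}}^G(K)|\cong H^3({\rm{Tot}}^*(A^{*,*}(K\rtimes G,\Tt)))/{\rm{Aut}}_G(K)$.

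The step I expect to be the main obstacle is the verification in the third paragraph that $\overline{f}^{\,*}$ genuinely restricts to a cochain self-map of the sub double complex $A^{*,*}(K\rtimes G,\Tt)$ compatible with both $\delta_G$ and $\delta_K$ and with all normalization conditions, so that the ${\rm{Aut}}_G(K)$-action on $H^3$ is well defined and interacts correctly with composition of morphisms of pseudomonoids; once this is in place, everything else is a direct translation of Propositions \ref{proposition pseudomonoid with strict unit 3-cocycle} and \ref{proposition morphism of pseudomonoids with strict unit} into the language of cocycles, coboundaries and orbits. A minor point worth double-checking is that the trivializing cochains $\chi,\kappa$ obtained from a coboundary in the \emph{normalized} total complex are admissible data for a morphism of pseudomonoids with strict unit, which is automatic because the whole construction takes place in the normalized double complex from the outset.
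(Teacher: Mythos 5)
Your proposal is correct and follows essentially the same route as the paper: one first quotients by the morphisms $(\mathrm{id}_K,\chi,\kappa)$, which by Proposition \ref{proposition morphism of pseudomonoids with strict unit} identifies exactly the cohomologous $3$-cocycles and yields $H^3({\rm{Tot}}^*(A^{*,*}(K\rtimes G,\Tt)))$, and then observes that a general invertible morphism $(F,\chi,\kappa)$ exists precisely when the classes lie in the same ${\rm{Aut}}_G(K)$-orbit. Your write-up simply makes explicit the verifications (the $\overline{F}^{\,*}$-action on the normalized sub double complex, the identification of coboundaries with ${\rm{Tot}}^2(A^{*,*})$) that the paper leaves implicit.
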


\begin{proof}
We first perform the quotient with the morphisms $(F, \chi,\kappa)$ where $F$ is the identity on $K$; this quotient is precisely
$H^3({\rm{Tot}}^*(A^{*,*}(K \rtimes G, \Tt)))$. Then we see that elements of $H^3({\rm{Tot}}^*(A^{*,*}(K \rtimes G, \Tt)))$ lying on the same orbit of the action of  ${\rm{Aut}}_G(K)$ define equivalent pseudomonoid structures. 
\end{proof}

In particular we may conclude that if
$H^3({\rm{Tot}}^*(A^{*,*}(K \rtimes G, \Tt)))=0$, then all pseudomonoid structures
with strict unit in the 2-category of $G$-sets with twist over $K$
are isomorphic to the trivial one. Since we are interested in finding pseudomonoid structures
with strict unit in the 2-category of $G$-sets with twist over $K$ non isomorphic to the trivial one,
we will calculate the group $H^3({\rm{Tot}}^*(A^{*,*}(K \rtimes G, \Tt)))$, the group ${\rm{Aut}}_G(K)$  and the set
$|{\rm{Psdmn}}^G(K)|$ for some particular examples.

The main tool we will use in order to calculate the group $H^3({\rm{Tot}}^*(A^{*,*}(K \rtimes G, \Tt)))$ will be
the Lyndon-Hochschild-Serre spectral sequence. This spectral sequence can be obtained if the complex
${\rm{Tot}}^*(A^{*,*}(K \rtimes G, \Tt))$ is filtered by the complexes
$$F^n : ={\rm{Tot}}^*(A^{*\geq n,*}(K \rtimes G, \Tt))$$
thus defining a spectral sequence whose second page becomes 
$$\overline{E}_2^{p,q}= H^p(G,H^q(K,\Tt))$$
where $G$ acts on $H^q(K,\Tt)$ through the induced action of $G$ on $K$; note in particular that
$$\overline{E}_2^{0,q}=H^q(K,\Tt)^G \text{ and } \overline{E}_2^{p,0}= H^p(G,\Tt).$$

On the other hand we have that 
$${\rm{Aut}}_G(K) \cong C_{{\rm{Aut}}(K)}(\rho(G)) $$
where $\rho: G \to {{\rm{Aut}}}(K)$ determines the action of $G$, and 
$$   C_{{\rm{Aut}}(K)}(\rho(G)) := \{ f \in {{\rm{Aut}}(K)} \colon [f,\rho(g)]=1 \text{ for all } g \in G \}$$
is the centralizer of $\rho(G)$ on ${\rm{Aut}}(K)$,
consisting of the automorphisms of $K$ which commute with the $G$-action. 
In particular when $G= {{\rm{Aut}}(K)}$ we have that ${{\rm{Aut}}_G(K)}= Z({{\rm{Aut}}(K)})$. Let us see some examples:

\subsubsection{$K=\integer/p$ for prime $p >2$,  and $G={\rm{Aut}}(\integer/p)=\integer/(p-1)$}
Here we have that ${\rm{Aut}}_G(K)= Z({\rm{Aut}}(K)) \cong \integer/(p-1)$ and that
\begin{align*}
H^0(G, H^3(K,\Tt)) & = H^3(K,\Tt)^G \cong (\integer/p)^{\integer/(p-1)} = 0\\
H^1(G, H^2(K,\Tt)) & ={\rm{Hom}}(\integer/(p-1),0)=0\\
H^2(G, H^1(K,\Tt)) &= H^2(\integer/(p-1), \integer/p)= 0
\end{align*}
where the last equality follows from the fact that $H^n(G,M)$ is annihilated by $|G|$ for all $n>0$ \cite[III.10.2]{Brown}.
In this case $H^3({\rm{Tot}}^*(A^{*,*}(K \rtimes G, \Tt)))=0$ and therefore all pseudomonoid structures on $K=\integer/p$ are equivalent
to the trivial one. This example could be generalized as follows:

\subsubsection{Groups with order relatively prime} 
 Let us recall two facts.  First, if $G$ is a finite group of order $m$, $r$ is a positive integer with $(m,r)=1$ and $A^r=0$, then $H^n(K,A)=0$ for all $n$ and all subgroups $K$ of $G$, see \cite[Proposition 1.3.1]{Karpilovsky}. And second, if $e$ is the exponent of $H^2(G,\Tt)$ then $e^2$ divides the order of $G$,  see \cite[Theorem 2.1.5]{Karpilovsky}.

Then for the case on which   $|G|$ is relatively prime to $|K|$ we obtain that
 $$H^1(G,H^2(K,\Tt))=0 \text{ and }H^2(G,H^1(K,\Tt))=0.$$ Therefore we have that 
$H^3({\rm{Tot}}^*(A^{*,*}(K \rtimes G, \Tt)))\cong H^3(K,\Tt)^G$ and 
$$|{\rm{Psdmn}}^G(K)| \cong H^3(K,\Tt)^G/\Aut_G(K).$$

\subsubsection{The dihedral group $D_n$ as  a semi-direct product.} The dihedral group is isomorphic to $\integer/n \rtimes \integer/2$ when $\integer/2$ acts on $\integer/n$ by multiplication of $-1$. Since the induced action of $\integer/2$ on the cohomology ring $H^*(\integer/n,\integer) \cong \integer[x]/\langle nx \rangle$ maps $x \mapsto -x$, we have that $x^2 \mapsto x^2$ and therefore
$H^4(\integer/n,\integer)^{\integer/2}= \integer/n$ and
\[
H^2(\integer/2, H^2(\integer/n,\integer))=H^2(\integer/n,\integer)^{\integer/2} = \left\{\begin{matrix} 
\integer/2 & \text{if} & n \text{ is even}\\
0 & \text{if} & n \text{ is odd}.
\end{matrix}  \right.
\]
Since 
\[
H^4(D_n,\integer)= \left\{ \begin{matrix} 
\integer/2 \oplus \integer/2 \oplus \integer/n & \text{if} & n \text{ is even}\\
\integer/2 \oplus \integer/n & \text{if} & n \text{ is odd}
\end{matrix}  \right.
\]
we know that 
\[
H^3({\rm{Tot}}^*(A^{*,*}(\integer/n \rtimes \integer/2, \Tt)))= \left\{\begin{matrix} 
 \integer/2 \oplus \integer/n & \text{if} & n \text{ is even}\\
 \integer/n & \text{if} & n \text{ is odd},
\end{matrix}  \right.
\]
and therefore 
\[
|{\rm{Psdmn}}^{\integer/2}(\integer/n)|
= \left\{\begin{matrix} 
 (\integer/2 \oplus \integer/n ) / {\rm{Aut}}(\integer/n) & \text{if} & n \text{ is even}\\
( \integer/n ) / {\rm{Aut}}(\integer/n)& \text{if} & n \text{ is odd}.
\end{matrix}  \right.
\]
because in this case ${\rm{Aut}}_{\integer/2}(\integer/n)={\rm{Aut}}(\integer/n)$.

In particular, when $n=4$ we have that ${\rm{Aut}}(\integer/4)=\integer/2$. and therefore the action of ${\rm{Aut}}_{\integer/2}(\integer/4)$  on 
$H^3({\rm{Tot}}^*(A^{*,*}(\integer/4 \rtimes \integer/2, \Tt)))$ is trivial. Hence 
$$|{\rm{Psdmn}}^{\integer/2}(\integer/4)|
= \integer/2 \oplus \integer/4.$$

\subsection{The case of the group acting on itself by conjugation}  \label{subsection:definition_of_alpha_beta_theta_from_w}

 Perhaps the most known pseudomonoid with strict unit in the 2-category of $G$-sets
with twist was introduced by Dijkgraaf, Pasquier and Roche in
\cite[Section 3.2]{Dijkgraaf} while defining the quasi Hopf algebra $D^w(G)$ with
 $w \in Z^3(G; \Tt)$. In the equations (3.2.5) and (3.2.6) of \cite{Dijkgraaf} they defined a 3-cocycle
$\alpha_w \oplus \beta_w \oplus \theta_w \in
Z^3({\rm{Tot}}(A^{*,*}(G \rtimes G, \Tt)))$ by the equations 
\begin{align*}
  \alpha_w[g|h||x]  & := \frac{w[g|h|x] \  w[ghxh^{-1}g^{-1}|
  g|h]}{w[g|hxh^{-1}|h]}\\
  \beta_w[g||x|y] & := \frac{w[g|x|y] \
  w[gxg^{-1}|gyg^{-1}|g]}{w[gxg^{-1}|g|y]}\\
  \theta_w[x|y|z]& := w[x|y|z],
\end{align*}
where $ \alpha_w$ was used to define the algebra law, $\beta_w$ to define the coalgebra law, and $ \theta_w$ encoded the fact that
the coproduct is quasicoassociative (to be precise, in order to get exactly the same formul{\ae} as in \cite{Dijkgraaf}, it is necessary to change $G$ by $G^{op}$).

This quasi Hopf algebra $D^w(G)$ is known as the {\it{Twisted Drinfeld Double}} of $G$ twisted by $w$ (cf. \cite{Willerton}).
Firstly we claim the following.

\begin{lemma}
The 3-cocycle $\alpha_w \oplus \beta_w \oplus \theta_w$ equals $\tau_1^\vee w$, the image 
of $w$ under the restricted shuffle homomorphis $\tau_1^\vee w$ defined in \eqref{shuffle_hom_for_A}.
\end{lemma}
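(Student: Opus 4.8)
The plan is to compute both sides of the asserted equality as explicit cochains in ${\rm{Tot}}^3(A^{*,*}(G \rtimes G, \Tt))$ and to compare them component by component. By the definition \eqref{shuffle_hom_for_A}, $\tau_1^\vee w$ is the direct sum, over the pairs $(p,q) = (2,1), (1,2), (0,3)$, of the maps sending a tuple to $w\bigl(\tau[g_1|\cdots|g_p||k_1|\cdots|k_q]\bigr)$, and these live respectively in $C^{2,1}$, $C^{1,2}$, $C^{0,3}$ of $A^{*,*}(G \rtimes G,\Tt)$, that is, exactly in the slots occupied by $\alpha_w$, $\beta_w$, $\theta_w$. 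So it suffices to evaluate $\tau$ on the relevant generators (in bar notation, where the $(gk)^{-1}$-prefactor is trivial and no conjugation by $g$ survives), apply $w$, and recognize the Dijkgraaf--Pasquier--Roche formul\ae{}.

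First I would record $\tau$ on generators with the help of the lattice-path picture introduced just before the definition of $\tau$. For $(p,q)=(2,1)$ the rectangle with corners $(0,0),(2,0),(2,1),(0,1)$ has horizontal edges labelled $g$ and $h$, and vertical edges labelled $x$, $hxh^{-1}$, $ghxh^{-1}g^{-1}$ in the columns $2$, $1$, $0$ respectively; the three $(2,1)$-shuffles are the minimal lattice paths which I abbreviate RRU, RUR, URR, with signs $(-1)^{|\lambda|} = +1, -1, +1$. Reading off the edge labels along each path gives
\[
\tau[g|h||x] = [g|h|x] - [g|hxh^{-1}|h] + [ghxh^{-1}g^{-1}|g|h] \in C_3(EG,\integer),
\]
so, applying the (multiplicative, normalized) cochain $w$, the $(2,1)$-component of $\tau_1^\vee w$ is $\alpha_w[g|h||x]$. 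Running the same argument for $(p,q)=(1,2)$ on the rectangle $(0,0),(1,0),(1,2),(0,2)$ --- horizontal edge $g$, vertical edges $x, y$ in column $1$ and $gxg^{-1}, gyg^{-1}$ in column $0$, paths RUU, URU, UUR with signs $+1, -1, +1$ --- yields
\[
\tau[g||x|y] = [g|x|y] - [gxg^{-1}|g|y] + [gxg^{-1}|gyg^{-1}|g],
\]
whose image under $w$ is $\beta_w[g||x|y]$. Finally, for $(p,q)=(0,3)$ there is a single $(0,3)$-shuffle, of sign $+1$, and there are no horizontal edges to conjugate by, so $\tau[||x|y|z] = [x|y|z]$ and its image under $w$ is $w[x|y|z] = \theta_w[x|y|z]$. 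Assembling these three identities yields $\tau_1^\vee w = \alpha_w \oplus \beta_w \oplus \theta_w$, as claimed.

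I expect the only genuine difficulty to be bookkeeping: one must match conventions with care, namely (i) the sign $(-1)^{|\lambda|}$ in the definition of $\tau$ against the parity of the shuffle permutation attached to each lattice path; (ii) the conjugation rule for vertical edges, $g_{i+1}\cdots g_p\,k_j\,(g_{i+1}\cdots g_p)^{-1}$, which is precisely the source of the conjugated arguments $hxh^{-1}$, $ghxh^{-1}g^{-1}$, $gxg^{-1}$, $gyg^{-1}$ appearing in the formul\ae{} of \cite{Dijkgraaf}; and (iii) the fact that using $\tau_1^\vee$ rather than $\tau^\vee$ is exactly what discards the $C^{3,0}$-component $F \mapsto w(\tau[s_1|s_2|s_3||\,]) = w[s_1|s_2|s_3]$, so that no spurious fourth term appears. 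As a byproduct, since $\tau_1^\vee$ is a cochain map and $w \in Z^3(G,\Tt)$, this computation reproves that $\alpha_w \oplus \beta_w \oplus \theta_w$ lies in $Z^3({\rm{Tot}}^*(A^{*,*}(G \rtimes G,\Tt)))$.
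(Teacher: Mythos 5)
Your proposal is correct and is essentially the paper's own argument: the paper proves the lemma by drawing the lattice-path diagrams for the three $(2,1)$-shuffles and the three $(1,2)$-shuffles of $[g|h||k]$ and $[g||h|k]$, reading off the edge labels (with the conjugation rule for vertical edges), and recognizing the Dijkgraaf--Pasquier--Roche formul\ae, exactly as you do. Your write-up is in fact slightly more complete, since you also spell out the signs $(-1)^{|\lambda|}$, the trivial $(0,3)$-component giving $\theta_w$, and why passing to $\tau_1^\vee$ discards the $C^{3,0}$-term.
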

\begin{proof}
From the following diagrams associated to $(\tau_1^\vee w)[g|h|k]$

\begin{tikzpicture}
\matrix [matrix of math nodes,row sep=5mm]
{  |(a)|&  [5mm]   |(b)|  &  [5mm]|(c)| \bullet   \\
  |(d)| \bullet  & |(e)|  \bullet& |(f)| \bullet   \\
}; 
\tikzstyle{every node}=[midway,auto,font=\scriptsize]
\draw[-stealth] (d) -- node [below] {$g$} (e);
\draw[-stealth] (e) -- node  [below]{$h$} (f);
\draw[-stealth] (f) -- node {$k$} (c);
\end{tikzpicture} \hspace{1cm}
\begin{tikzpicture}
\matrix [matrix of math nodes,row sep=5mm]
{  |(a)|&  [5mm]   |(b)| \bullet    &  [5mm]|(c)| \bullet   \\
  |(d)| \bullet  & |(e)|  \bullet& |(f)|  \\
};
\tikzstyle{every node}=[midway,auto,font=\scriptsize]
\draw[-stealth] (d) -- node  [below] {$g$} (e);
\draw[-stealth] (e) -- node {$hkh^{-1}$} (b);
\draw[-stealth] (b) -- node {$h$} (c);
\end{tikzpicture}
\hspace{0.2cm}
\begin{tikzpicture}
\matrix [matrix of math nodes,row sep=5mm]
{  |(a)| \bullet&  [5mm]   |(b)| \bullet    &  [5mm]|(c)| \bullet   \\
  |(d)| \bullet  & |(e)|  & |(f)|  \\
};
\tikzstyle{every node}=[midway,auto,font=\scriptsize]
\draw[-stealth] (d) -- node [left]  {$ghkh^{-1}g^{-1}$} (a);
\draw[-stealth] (a) -- node {$hkh^{-1}$} (b);
\draw[-stealth] (b) -- node {$h$} (c);
\end{tikzpicture}

\begin{tikzpicture}
\matrix [matrix of math nodes,row sep=5mm]
{  |(a)|&  [5mm]   |(b)| \bullet \\
  |(c)|    &  |(d)| \bullet \\ 
  |(e)|  \bullet  & |(f)| \bullet   \\
}; 
\tikzstyle{every node}=[midway,auto,font=\scriptsize]
\draw[-stealth] (e) -- node [above] {$g$} (f);
\draw[-stealth] (f) -- node  [right] {$h$} (d);
\draw[-stealth] (d) -- node [right] {$k$} (b);
\end{tikzpicture} \hspace{1cm}
\begin{tikzpicture}
\matrix [matrix of math nodes,row sep=5mm]
{  |(a)|&  [5mm]   |(b)| \bullet \\
  |(c)|   \bullet   &  |(d)| \bullet \\ 
  |(e)|  \bullet  & |(f)|  \\
}; 
\tikzstyle{every node}=[midway,auto,font=\scriptsize]
\draw[-stealth] (e) -- node [left] {$ghg^{-1}$} (c);
\draw[-stealth] (c) -- node  {$g$} (d);
\draw[-stealth] (d) -- node [right]{$k$} (b);
\end{tikzpicture} \hspace{1cm}
\begin{tikzpicture}
\matrix [matrix of math nodes,row sep=5mm]
{  |(a)|   \bullet  &  [5mm]   |(b)| \bullet \\
  |(c)|   \bullet   &  |(d)|\\ 
  |(e)|  \bullet  & |(f)|  \\
}; 
\tikzstyle{every node}=[midway,auto,font=\scriptsize]
\draw[-stealth] (e) -- node [left] {$ghg^{-1}$} (c);
\draw[-stealth] (c) -- node [left] {$gkg^{-1}$} (a);
\draw[-stealth] (a) -- node [above]{$g$} (b);
\end{tikzpicture}\\
we obtain that $\alpha_w \oplus \beta_w \oplus \theta_w =\tau_1^\vee w$.
\end{proof}
Therefore by Lemma \ref{lem:splitting_of_C**_in_A**_and_G}, Theorem \ref{thm:shuffle_map} and Lemma \ref{lem:iso_A_with_B_and_cohomology_of_K} we get
\begin{proposition}
The cohomology class 
$$[\alpha_w \oplus \beta_w \oplus \theta_w] \oplus [w]\in H^3({\rm{Tot}}^*(A^{*,*}(G \rtimes G, \Tt))) \oplus H^3(G,\Tt)$$
is equal to $\mu^*[w]$ where $\mu: G \rtimes G \to G, (a,g) \mapsto ag$ is the multiplication map. Moreover
we obtain the isomorphism 
\begin{align*}
H^3(G,\Tt) \oplus H^3({\rm{Tot}}^*(B^{*,*}(G \rtimes G, \Tt)))& \to H^3({\rm{Tot}}^*(A^{*,*}(G \rtimes G, \Tt)))\\
[w] \oplus [x] & \mapsto [\alpha_w \oplus \beta_w \oplus \theta_w] +[x].
\end{align*}
\end{proposition}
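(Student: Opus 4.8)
The plan is to obtain both assertions formally from facts already in hand: the cochain-level equality $\alpha_w\oplus\beta_w\oplus\theta_w=\tau_1^\vee w$ from the preceding Lemma; the canonical splitting of Lemma~\ref{lem:splitting_of_C**_in_A**_and_G}; the identity $\tau^*=\mu^*$ of Theorem~\ref{thm:shuffle_map}; and the isomorphism of Lemma~\ref{lem:iso_A_with_B_and_cohomology_of_K}. The first thing to record is that all of these are available with $\Tt$-coefficients: Lemma~\ref{lem:splitting_of_C**_in_A**_and_G} states the $\Tt$ version explicitly, and the short exact sequences, the chain maps $\tau^\vee$, $\iota_K^*|_A$, $\pi_2^*$, and the splitting diagrams of Section~\ref{Section: The case of the trivializable semi-direct product.} are all defined at the level of normalized cochains, independently of the coefficient group, so they transfer verbatim. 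This is the only place where a (routine) check is needed.

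For the first statement, I would use that, under the identification of Lemma~\ref{lem:splitting_of_C**_in_A**_and_G}, the dual shuffle map $\tau^\vee$ decomposes as $\tau_1^\vee\oplus 1$ with $\tau_1^\vee$ as in~\eqref{shuffle_hom_for_A}; combined with the preceding Lemma this says that $\tau^\vee w$ is, in cochains, the element $(\alpha_w\oplus\beta_w\oplus\theta_w)\oplus w$ in ${\rm{Tot}}^*(A^{*,*}(G\rtimes G,\Tt))\oplus C^{*,0}(G\rtimes G,\Tt)$. Passing to cohomology and invoking $\tau^*=\mu^*$ (Theorem~\ref{thm:shuffle_map}), it follows that $\mu^*[w]$ corresponds, under $H^3(G\rtimes G,\Tt)\cong H^3({\rm{Tot}}^*(A^{*,*}(G\rtimes G,\Tt)))\oplus H^3(G,\Tt)$, exactly to $[\alpha_w\oplus\beta_w\oplus\theta_w]\oplus[w]$; that the second component is genuinely $[w]$ is pinned down by $\iota_G^*\mu^*=\mathrm{id}$, i.e. the commuting triangle in diagram~\eqref{diagram cohomology of K x G on the adjoint action}, since $\iota_G^*$ is the projection onto the $H^*(G,\Tt)$-summand.

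For the second statement, Lemma~\ref{lem:iso_A_with_B_and_cohomology_of_K}, applied with $\Tt$-coefficients, already asserts that $x\oplus y\mapsto \tau_1^*x+y$ defines an isomorphism $H^3(G,\Tt)\oplus H^3({\rm{Tot}}^*(B^{*,*}(G\rtimes G,\Tt)))\to H^3({\rm{Tot}}^*(A^{*,*}(G\rtimes G,\Tt)))$. By the preceding Lemma, $\tau_1^*[w]=[\tau_1^\vee w]=[\alpha_w\oplus\beta_w\oplus\theta_w]$, so the displayed map $[w]\oplus[x]\mapsto[\alpha_w\oplus\beta_w\oplus\theta_w]+[x]$ is literally this isomorphism, and the claim follows. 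Throughout one uses that $G$ acts on itself by conjugation, which is precisely the setting in which $\mu$, and hence $\tau$ and $\tau_1$, are group homomorphisms realized at the chain level.

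I expect no conceptual obstacle: the only delicate point is bookkeeping, namely lining up the three successive identifications — the chain-level decomposition $\tau^\vee\cong\tau_1^\vee\oplus 1$, the splitting of $H^*(G\rtimes G,\Tt)$, and the splitting of $H^*({\rm{Tot}}^*(A^{*,*}))$ coming from diagram~\eqref{diagram H*A -> H*K} — so that no spurious correction terms intervene and $\mu^*[w]$ lands on exactly $[\alpha_w\oplus\beta_w\oplus\theta_w]\oplus[w]$. Once these compatibilities are verified, the proposition is immediate, exactly as the sentence preceding it indicates.
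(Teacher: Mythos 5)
Your proposal is correct and follows exactly the route the paper intends: the paper offers no separate proof but deduces the proposition directly from the cochain identity $\alpha_w\oplus\beta_w\oplus\theta_w=\tau_1^\vee w$, the splitting of Lemma \ref{lem:splitting_of_C**_in_A**_and_G}, Theorem \ref{thm:shuffle_map} ($\tau^*=\mu^*$), and Lemma \ref{lem:iso_A_with_B_and_cohomology_of_K}, which is precisely the chain of identifications you spell out. Your explicit remark that these statements, formulated with $\integer$ coefficients, transfer verbatim to $\Tt$ coefficients (since $\tau$, $\tau_1^\vee$, and the splittings are defined at the level of free resolutions and normalized cochains) is the only bookkeeping the paper leaves implicit, and you handle it correctly.
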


Now, since $G \rtimes G \cong G \times G$, we have that the Lyndon-Hochschild-Serre spectral sequence collapses at the second page.
And since the action of $G$ on $G$ is given by conjugation, then the action of $G$ on $H^*(G,\Tt)$ is trivial. Hence we have that
 $H^3({\rm{Tot}}(B^{*,*}(G \rtimes G, \Tt)))$ sits in the middle of the
 short exact sequence
  $$ 0 \to H^2(G,{\rm{\Hom}}(G, \Tt)) \to H^3({\rm{Tot}}(B^{*,*}(G \rtimes G, \Tt)))
  \to  {\rm{Hom}}(G, H^2(G,\Tt)) \to 0.$$
  
  Moreover, in the present situation we have $\rho(G)={\rm{Inn}}(G)$, and therefore
  $${\rm{Aut}}_G(G)= C_{{\rm{Aut}}(G)}({\rm{Inn}}(G)),$$
  namely the group of automorphisms of $G$ which commute with all inner automorphisms.
  
  With the previous calculations at hand we can calculate the group
$H^3({\rm{Tot}}(B^{*,*}(G \rtimes G, \Tt)))$ and ${\rm{Aut}}_G(G)$  in some particular examples:

\subsubsection{$G$ simple and non abelian:} \label{G_simple_non_abelian} When $G$ is simple and
  nonabelian, its abelianization $G/[G,G]$ is trivial. Therefore
  ${\rm{Hom}}(G,\Tt)$ and ${\rm{Hom}}(G, H^2(G,\Tt))$ are trivial
  and  hence $$H^3({\rm{Tot}}^*(B^{*,*}(G \rtimes G; \Tt)))=0.$$
 So we have that
 $$|{\rm{Psdmn}}^G(G)| \cong H^3(G,\Tt)/{\rm{Aut}}_G(G).$$

When $G$ is the alternating group $A_n$ we have that ${\rm{Aut}}_{A_n}(A_n)=C_{{\gr{S}}_n}(A_n)=1$ and therefore
$$|{\rm{Psdmn}}^{A_n}(A_n)| \cong H^3(A_n,\Tt), \text{ for } n\neq	6 \text{ and } n >4.$$
 In particular when $n=5$ we have that
  $H^3(A_5,\Tt)=\integer/120$ and hence 
  $$|{\rm{Psdmn}}^{A_5}(A_5)| \cong \integer/120.$$
 
\subsubsection{Binary icosahedral group} \label{G_binary_icosahedral} The binary icosahedral group $\overline{A}_5$ is a subgroup of $SU(2)$ that can be obtained as the pullback of the diagram
$$\xymatrix{\overline{A}_5 \ar[r] \ar[d] & SU(2) \ar[d] \\
A_5 \ar[r] & SO(3)
}$$ 
where $A_5$ embeds in $SO(3)$ as the group of isometries of an icosahedron. This group satisfies $H_1(\overline{A}_5, \integer)=0$, $H_2(\overline{A}_5,\integer)=0$
 and $H_3(\overline{A}_5,\integer)=\integer/120$ (see \cite[Page 279]{AdemMilgram}), and therefore $H^1(\overline{A}_5,\integer)=H^2(\overline{A}_5,\integer)=H^3(\overline{A}_5,\integer)=0$ and $H^4(\overline{A}_5,\integer)=\integer/120$. Hence $H^1(\overline{A}_5,\Tt)=H^2(\overline{A}_5,\Tt)=0$, $H^3(\overline{A}_5,\Tt)=\integer/120$ and $H^3({\rm{Tot}}(B^{*,*}(\overline{A}_5 \rtimes \overline{A}_5; \Tt)))=0.$
 
 In this case ${\rm{Inn}}(\overline{A}_5)\cong A_5$ and ${\rm{Aut}}(\overline{A}_5)\cong{\gr{S}}_5$, therefore ${\rm{Aut}}_{\overline{A}_5}(G)=C_{{\gr{S}}_5}(A_5)=1$ and
 $$|{\rm{Psdmn}}^{A_5}(A_5)| \cong \integer/120.$$
 
A similar argument applies to any superperfect group since by definition they are the ones such that  $H_1(G, \integer)=0$ and $H_2(G,\integer)=0$.

\subsubsection{The dihedral group $G=D_n$ with $n$ odd.} In this
case $${\rm{Hom}}(D_n,\Tt) = \integer/2, \ \ \ H^2(D_n;
\integer/2)=\integer/2 \ \ {\rm{and}} \ \ H^2(D_n; \Tt)=0,$$ hence
$$H^3({\rm{Tot}}^*(B^{*,*}(D_n \rtimes D_n; \Tt)))= H^2(D_n,{\rm{Hom}}(D_n,\Tt)=\integer/2.$$
Since $H^3(D_n; \Tt)= \integer/2 \oplus \integer/n$ we have that
the isomorphism classes of pseudomonoid structures coming from the
Twisted Drinfeld Double construction are $\integer/2 \oplus
\integer/n$ and that there is an independent pseudomonoid
structure which comes from $H^2(D_n, {\rm{Hom}}(D_n;
\Tt))=\integer/2$.  
In this case we have that ${\rm{Aut}}_{D_n}(D_n)=Z(D_n)=1$ and therefore 
$$|{\rm{Psdmn}}^{D_n}(D_n)| \cong( \integer/n \oplus \integer/2 \oplus \integer/2 ).$$
\subsubsection{The symmetric group
$G=\gr{S}_n$ for $n\geq 4$.} From \cite[VI-5]{AdemMilgram} we know that
$${\rm{Hom}}(\gr{S}_n,\Tt) = \integer/2, \ \ \ H^2(\gr{S}_n;
\integer/2)=\integer/2$$ $$H^2(\gr{S}_n; \Tt)=\integer/2 \ \
{\rm{and}} \ \ H^3(\gr{S}_n; \integer/2)=\integer/2 \oplus
\integer/2$$ hence we get the exact sequence
$$ 0 \to \integer/2 \to H^3({\rm{Tot}}^*(B^{*,*}({\gr{S}}_n \rtimes {\gr{S}}_n ; \Tt)))
  \to \integer/2 \to 0.$$
  In particular we could say that the nontrivial
element in $H^2({\gr{S}}_n,{\rm{Hom}}({\gr{S}}_n, \Tt))=\integer/2$
induces a pseudomonoid structure on ${\gr{S}}_n$ which is not
isomorphic to any structure coming from the construction of the
Twisted Drinfeld Double. This follows from the fact that for $n\neq 2$ and $n\neq 6$,
${\rm{Aut}}({\gr{S}}_n)={\rm{Inn}}({\gr{S}}_n)={\gr{S}}_n$ and therefore
${\rm{Aut}}_{{\gr{S}}_n}({\gr{S}}_n)=Z({\gr{S}}_n)=1$ is the trivial
group. Whenever $n=6$ we know that ${\rm{Out}}({\gr{S}}_6) = \integer/2$; nevertheless
${\rm{Aut}}_{{\gr{S}}_6}({\gr{S}}_6)=1$. Therefore we have that
$$|{\rm{Psdmn}}^{{\gr{S}}_n}({\gr{S}}_n)|= H^3({\gr{S}}_n,\Tt) \oplus H^3({\rm{Tot}}^*(B^{*,*}({\gr{S}}_n \rtimes {\gr{S}}_n ; \Tt)))$$
with
$$ H^3({\gr{S}}_n,\Tt) = \left\{ \begin{matrix}
\integer/12 \oplus \integer/2 & \text{if} & n=4,5 \\
\integer/12 \oplus \integer/2 \oplus \integer/2 & \text{if} & n\geq 6
\end{matrix}\right. $$

\subsubsection{$G$ cyclic group.} When $G=\integer/n$ we get that
$$H^3({\rm{Tot}}^*(B^{*,*}( \integer/n \times  \integer/n ; \Tt)))= H^2(\integer/n, {\rm{Hom}}(\integer/n,\Tt))=
\integer/n$$ and $H^3(\integer/n,\Tt)=\integer/n$. The action of ${\rm{Aut}}_{\integer/n}(\integer/n) = \integer/n^\times$, which is the multiplicative group of units in $\integer/n$, on 
$H^1(\integer/n,\Tt)=\integer/n$ is given by multiplication and while on $H^3(\integer/n,\Tt)=\integer/n$ is given by the square of the multiplication; hence we get that
$$|{\rm{Psdmn}}^{\integer/n}(\integer/n)| \cong (\integer/n \oplus \integer/n)/\integer/n^\times$$
where the action is given by
 $$\integer/n^\times \times (\integer/n \oplus \integer/n) \to (\integer/n \oplus \integer/n), \hspace{0.5cm} (a,(x,y))\mapsto (ax,a^2y).$$ 

For example when $n=4$ we have that
$$|{\rm{Psdmn}}^{\integer/4}(\integer/4)| \cong (\integer/4) / (\integer/4^\times)  \times \integer/4.$$

\section{The monoidal category of equivariant vector bundles on a pesudomonoid}\label{Section:The monoidal category of equivariant vector bundles on a pseudomonoid}

Let $G$ be a   group and
$\mathcal{K}=(K,m,1,\alpha,\beta,\theta)$ a pseudomonoid with
strict unit in the 2-category of $G$-sets with twists. 
We define the category $Bun_G(\mathcal K)$ of $G$-equivariant
finite dimensional bundles over $K$ as follows:

An object is a $K$-graded finite dimensional Hilbert space $\mathcal{H}=
\bigoplus_{k\in K}\mathcal H_k$ and a twisted $G$-action
\[\rhd : G \to U(\mathcal H)\] such that
\begin{itemize}
  \item $\sigma \rhd \mathcal H_k = \mathcal H_{\sigma \cdot k}$
  \item $\sigma \rhd (\tau \rhd h_k) = \alpha[\sigma| \tau|| k](\sigma \tau) \rhd h_k$
  \item $e\rhd h = h$
\end{itemize}
for all $\sigma, \tau\in G, k\in K, h_k\in \mathcal H_k$. Morphisms in the category are linear maps that preserve  the grading and the twisted action, i.e., a linear map  $f:\mathcal H\to \mathcal H'$ is a morphism if
\begin{itemize}
  \item $f(\mathcal H_k)\subset \mathcal H'_k$,
  \item $f(\sigma \rhd h)= \sigma \rhd f(h)$
\end{itemize}for all $\sigma \in G, k \in K$ and $h\in \mathcal H$.

We define a monoidal structure on $Bun_G(\mathcal K)$ as follows: let $\mathcal H$ and $\mathcal H'$ be objects in $Bun_G(\mathcal K)$, then the tensor product of Hilbert spaces $\mathcal H\otimes \mathcal H'$ is a $G$-equivariant $K$-bundle with $K$-grading $(\mathcal H\otimes \mathcal H')_k =\bigoplus_{x,y \in K: xy = k}\mathcal H_x\otimes \mathcal H'_y$ and twisted $G$-action
$$\sigma\rhd (h_x\otimes h_y'):= \beta[\sigma||x|y] (\sigma\rhd h_x\otimes \sigma \rhd h_y'),$$
for all $k\in K, \sigma \in G,$ $h_x \in \mathcal H_x$ and $ h_y' \in \mathcal H_y'$.




Now, for $\mathcal H$, $\mathcal H'$ and $\mathcal H''$ objects
in $Bun_G(\mathcal K)$ the associativity constraint  $$\Theta:
(\mathcal H\otimes \mathcal H')\otimes \mathcal H''\to \mathcal
H\otimes (\mathcal H'\otimes \mathcal H''),$$ for the monoidal
structure $\otimes$ is defined by $$\Theta((h_x\otimes
h_y')\otimes h_z'')= \theta[x|y|z]^{-1}h_x\otimes (h_y'\otimes
h_z'')$$ for all $x, ,y, z\in K$, $h_x\in \mathcal H_x$, $h_y'\in
\mathcal H_y'$ and $h_z''\in \mathcal H_z''$.




Finally we define the unit object $\underline{\complex}$ as the
one dimensional Hilbert space $\mathbb C$ graded only at the unit
element $e\in K$, endowed with trivial $G$-action.  All in all, we have 
that

\begin{proposition}
For  $\mathcal{K}=(K,m,1,\alpha,\beta,\theta)$ a pseudomonoid with
strict unit in the 2-category of $G$-sets with twists, the triple
$(Bun_G(\mathcal K), \otimes, \underline{\complex})$, endowed with
the tensor product $\otimes$ and the unit element
$\underline{\complex}$ is a monoidal category (or tensor
category).
\end{proposition}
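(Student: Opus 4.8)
The plan is to verify directly the axioms of a monoidal category for $(Bun_G(\mathcal K),\otimes,\underline{\complex})$, using systematically the content of Proposition~\ref{proposition pseudomonoid with strict unit 3-cocycle}: that $\alpha\oplus\beta\oplus\theta$ is a $3$-cocycle in $\mathrm{Tot}(A^{*,*}(K\rtimes G,\Tt))$, which unwinds to the identities $\delta_G\alpha=1$, $\delta_G\beta\cdot\delta_K\alpha=1$, $\delta_G\theta\cdot\delta_K\beta^{-1}=1$, $\delta_K\theta=1$, together with the normalizations of $\alpha,\beta,\theta$. Everything reduces to matching each coherence condition with one of these.

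\emph{First}, I would check that $\otimes$ is a well-defined bifunctor $Bun_G(\mathcal K)\times Bun_G(\mathcal K)\to Bun_G(\mathcal K)$. Given objects $\mathcal H,\mathcal H'$, the space $\mathcal H\otimes\mathcal H'$ with the stated $K$-grading and $\beta$-twisted $G$-action is again an object: the grading condition follows from $G$-equivariance of $m$, the relation $e\rhd(-)=\mathrm{id}$ from normalization of $\beta$ in the $G$-variable, and the $\alpha$-twisted cocycle condition
\[\sigma\rhd\bigl(\tau\rhd(h_x\otimes h'_y)\bigr)=\alpha[\sigma|\tau||xy]\,(\sigma\tau)\rhd(h_x\otimes h'_y)\]
is, after expanding both sides with the twisted actions on $\mathcal H$ and $\mathcal H'$, exactly equation~\eqref{equation for beta}, i.e. $\delta_G\beta=m^*\alpha\cdot(\alpha\boxtimes\alpha)^{-1}$ evaluated on $[\sigma|\tau||x|y]$. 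On morphisms, $\otimes$ is the ordinary tensor product of linear maps, which preserves gradings and intertwines the twisted actions, so functoriality in each variable and the interchange law follow formally.

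\emph{Next}, I would verify that $\Theta$, the unit object, and the unit constraints lie in the category and are natural. Since $\Theta$ acts on $\mathcal H_x\otimes\mathcal H'_y\otimes\mathcal H''_z$ by the scalar $\theta[x|y|z]^{-1}\in\Tt$ composed with the canonical reassociation of tensor products of vector spaces, it is invertible, and naturality against morphisms of bundles is automatic because those preserve the grading. That $\Theta$ is a morphism in $Bun_G(\mathcal K)$ amounts to two checks: it preserves the $K$-grading (both sides graded at $xyz$, using associativity of $m$), and it is $G$-equivariant; computing $\sigma\rhd\Theta\bigl((h_x\otimes h'_y)\otimes h''_z\bigr)$ and $\Theta\bigl(\sigma\rhd((h_x\otimes h'_y)\otimes h''_z)\bigr)$ and collecting the $\beta$-factors coming from the two bracketings, their equality is precisely $\delta_G\theta=\delta_K\beta$, which is equation~\eqref{equation relating theta beta}. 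For the unit $\underline{\complex}$ (one-dimensional, graded at $e$, trivial action), the identities $m(1_K,h)=h=m(h,1_K)$ furnish canonical isomorphisms $\underline{\complex}\otimes\mathcal H\cong\mathcal H\cong\mathcal H\otimes\underline{\complex}$, which are natural and, by the normalizations $\beta[\sigma||1_K|x]=1=\beta[\sigma||x|1_K]$, $G$-equivariant.

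\emph{Finally}, the pentagon and triangle axioms. For four bundles graded at $x,y,z,w$, both composites around the pentagon act diagonally by products of $\theta^{\pm1}$'s; once the reassociation maps cancel, the pentagon reduces to
\[\theta[xy|z|w]\,\theta[x|y|zw]=\theta[x|y|z]\,\theta[x|yz|w]\,\theta[y|z|w],\]
which is equation~\eqref{cocycle eqn theta}, i.e. $\delta_K\theta=1$; and the triangle axiom, with the unitors being the trivial scalar, reduces to $\theta[x|1_K|y]=1$, which is equation~\eqref{normalization middle theta}. I do not expect a conceptual obstacle, since each coherence identity has been engineered to be a component of the $3$-cocycle condition of Proposition~\ref{proposition pseudomonoid with strict unit 3-cocycle}; the work is bookkeeping, and the most delicate point is the $G$-equivariance of $\Theta$ in the previous step, the only place where $\beta$ and $\theta$ genuinely interact, together with keeping the order of the $\beta$-factors and the orientations straight when matching the pentagon and triangle diagrams to $\delta_K\theta=1$ and $\theta[x|1_K|y]=1$.
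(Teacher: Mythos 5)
Your verification is correct and is exactly the argument the paper intends: the paper states this proposition without proof (treating it as a routine check after the construction of $\otimes$, $\Theta$ and $\underline{\complex}$), and your bookkeeping correctly matches each axiom to a component of the $3$-cocycle condition of Proposition~\ref{proposition pseudomonoid with strict unit 3-cocycle} --- in particular the $\alpha$-twisted action on $\mathcal H\otimes\mathcal H'$ is equation~\eqref{equation for beta}, $G$-equivariance of $\Theta$ is equation~\eqref{equation relating theta beta}, and the pentagon and triangle are equations~\eqref{cocycle eqn theta} and~\eqref{normalization middle theta}. Nothing further is needed.
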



\begin{rem}
Consider  the case of a  group  $G$ acting on itself by conjugation and the pseudomonoid $\mathcal{K}_G^w=(G,m,1,\alpha_w, \beta_w, \theta_w)$  defined in section \ref{subsection:definition_of_alpha_beta_theta_from_w}. In the case that $G$ is finite,  the tensor category $Bun_G(\mathcal{K}_G^w)$ is exactly the category of representations ${\rm{Rep}}(D^w(G))$ of the Twisted Drinfeld Double $D^w(G)$. Note that the quasi-Hopf algebra $D^w(G)$ is defined only for $G$ finite, but $Bun_G(\mathcal{K}_G^w)$ is defined for an arbitrary discrete group. So, the tensor category $Bun_G(\mathcal{K}_G^w)$ is a generalization of the Twisted Drinfeld Double of a finite group.


\end{rem}

\subsection{Morphism of pseudomonoids, monoidal functors and natural isomorphisms}

A morphism in the 2-category of pseudomonoids in the 2-category
$G$-sets with twists induce a monoidal functor between the
associated monoidal categories.
\begin{proposition} \label{proposition morphism pseudomonoids
induce monoidal functor} Let $\mathcal{F} =(F, \chi,
\kappa):\mathcal K \to \mathcal K'$ be a morphism of
pseudomonoids. Then $\mathcal{F}$ induces a monoidal functor from
the monoidal categories $(Bun_G(\mathcal K), \otimes ,
\underline{\complex})$ and $(Bun_G(\mathcal K'), \otimes',
\underline{\complex}')$.
\end{proposition}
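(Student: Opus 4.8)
The plan is to write down the functor explicitly together with its coherence isomorphisms, and then to recognize the two axioms of a (strong) monoidal functor as the three defining cochain equations of $\mathcal{F}=(F,\chi,\kappa)$ recorded in Proposition \ref{proposition morphism of pseudomonoids with strict unit}: namely $\delta_G\chi=F^*\alpha'/\alpha$ (equation \eqref{equation delta 1 chi}), $\delta_G\kappa\cdot\delta_K\chi=\beta/F^*\beta'$ (equation \eqref{equation cocycle delta 1 kappa delta 2 chi}), and $\delta_K\kappa=\theta/F^*\theta'$ (equation \eqref{equation cocycle delta 2 kappa}). First I would define the functor $\mathcal{F}_{*}\colon Bun_G(\mathcal K)\to Bun_G(\mathcal K')$ on objects by pushing the grading forward along $F$ and twisting the $G$-action by $\chi$: given $\mathcal H=\bigoplus_{k\in K}\mathcal H_k$ with twisted action $\rhd$, let $\mathcal{F}_{*}\mathcal H$ be the same Hilbert space with $K'$-grading $(\mathcal{F}_{*}\mathcal H)_{k'}:=\bigoplus_{F(k)=k'}\mathcal H_k$ and twisted action $\sigma\rhd' h_k:=\chi[\sigma||k]\,(\sigma\rhd h_k)$. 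Since $F$ is $G$-equivariant the action $\rhd'$ sends the homogeneous piece of $K'$-degree $F(k)$ to that of degree $F(\sigma k)=\sigma F(k)$; since $\chi$ is normalized in the $K$-variable one has $e\rhd' h=h$; and the identity $\sigma\rhd'(\tau\rhd' h_k)=\alpha'[\sigma|\tau||F(k)]\,(\sigma\tau)\rhd' h_k$ is exactly equation \eqref{equation delta 1 chi} evaluated on $[\sigma|\tau||k]$. On morphisms $\mathcal{F}_{*}$ is the identity on underlying linear maps, which stay $\mathcal K'$-morphisms because multiplying by $\chi[\sigma||k]$ commutes with grading-preserving intertwiners; so $\mathcal{F}_{*}$ is a well-defined functor.

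Next I would equip $\mathcal{F}_{*}$ with its monoidal structure. Both $\mathcal{F}_{*}\mathcal H\otimes'\mathcal{F}_{*}\mathcal H'$ and $\mathcal{F}_{*}(\mathcal H\otimes\mathcal H')$ have underlying graded space $\bigoplus_{x,y\in K}\mathcal H_x\otimes\mathcal H'_y$ with $h_x\otimes h'_y$ placed in $K'$-degree $F(xy)$, so I would set
\[
J_{\mathcal H,\mathcal H'}\colon\mathcal{F}_{*}\mathcal H\otimes'\mathcal{F}_{*}\mathcal H'\ \longrightarrow\ \mathcal{F}_{*}(\mathcal H\otimes\mathcal H'),\qquad h_x\otimes h'_y\ \longmapsto\ \kappa[x|y]^{-1}\,(h_x\otimes h'_y),
\]
which is invertible and clearly natural in $\mathcal H,\mathcal H'$ (it is a scalar on each homogeneous summand), together with $J_0\colon\underline{\complex}'\to\mathcal{F}_{*}\underline{\complex}$ the identity, which is legitimate because $F(1)=1'$ and $\chi[\sigma||1]=1$ force $\mathcal{F}_{*}\underline{\complex}=\underline{\complex}'$ on the nose. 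The one computation to carry out is that $J_{\mathcal H,\mathcal H'}$ intertwines the two twisted $G$-actions: expanding $\sigma\rhd\bigl(J(h_x\otimes h'_y)\bigr)$ and $J\bigl(\sigma\rhd(h_x\otimes h'_y)\bigr)$ with the definitions of $\otimes,\otimes'$ and $\rhd'$, and cancelling the common factor $\sigma\rhd h_x\otimes\sigma\rhd h'_y$, leaves the scalars $\beta[\sigma||x|y]\,\chi[\sigma||xy]\,\kappa[x|y]^{-1}$ and $\beta'[\sigma||F(x)|F(y)]\,\chi[\sigma||x]\,\chi[\sigma||y]\,\kappa[\sigma x|\sigma y]^{-1}$, whose equality is precisely equation \eqref{equation cocycle delta 1 kappa delta 2 chi}.

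It remains to check the coherence axioms of a monoidal functor. Evaluating the associativity hexagon for $(\mathcal{F}_{*},J)$ — which relates $J$, the associativity constraints $\Theta$ and $\Theta'$ of $Bun_G(\mathcal K)$ and $Bun_G(\mathcal K')$ (acting by the scalars $\theta[x|y|z]^{-1}$ and $\theta'[F(x)|F(y)|F(z)]^{-1}$), and the bifunctoriality of $\otimes$ — on a generator $(h_x\otimes h'_y)\otimes h''_z$, one lands on $h_x\otimes(h'_y\otimes h''_z)$ along both sides with respective scalar factors $\kappa[x|y]^{-1}\kappa[xy|z]^{-1}\theta[x|y|z]^{-1}$ and $\theta'[F(x)|F(y)|F(z)]^{-1}\kappa[y|z]^{-1}\kappa[x|yz]^{-1}$; their equality is exactly equation \eqref{equation cocycle delta 2 kappa}, i.e. $\delta_K\kappa=\theta/F^*\theta'$. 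The two unit-compatibility triangles collapse, using $J_0=\mathrm{id}$ and the canonical identifications $\underline{\complex}\otimes\mathcal H\cong\mathcal H\cong\mathcal H\otimes\underline{\complex}$, to the normalizations $\kappa[1|k]=1=\kappa[k|1]$, which hold because $\kappa\in C^{0,2}(K\rtimes G,\Tt)$. Thus $(\mathcal{F}_{*},J,J_0)$ is a (strong) monoidal functor from $(Bun_G(\mathcal K),\otimes,\underline{\complex})$ to $(Bun_G(\mathcal K'),\otimes',\underline{\complex}')$.

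The only genuine difficulty I anticipate is bookkeeping: one must pin down, once and for all, the sign/inverse conventions in $\rhd'$, in $J$ and in the orientation of the coherence cells so that the three scalar identities above appear verbatim as the entrywise forms of $\delta_G\chi$, $\delta_G\kappa\cdot\delta_K\chi$ and $\delta_K\kappa$ prescribed by Definition \ref{definition double complex}. Once those conventions are aligned with Proposition \ref{proposition morphism of pseudomonoids with strict unit}, each verification is a one-line comparison and no ingredient is needed beyond the three cited equations.
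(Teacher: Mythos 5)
Your proposal is correct and follows essentially the same route as the paper: push the grading forward along $F$, twist the $G$-action by $\chi$, take $\kappa^{-1}$ as the monoidal structure map, and read off the functoriality, intertwining, and coherence conditions as exactly equations \eqref{equation delta 1 chi}, \eqref{equation cocycle delta 1 kappa delta 2 chi} and \eqref{equation cocycle delta 2 kappa}. In fact you spell out the scalar verifications that the paper's proof leaves implicit, and they check out.
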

\begin{proof}
Let $\mathcal{F} =(F, \chi, \kappa):\mathcal K \to \mathcal K'$ be
a morphism of pseudomonoids as defined in Proposition
\ref{proposition morphism of pseudomonoids with strict unit}. We
define a functor $\textbf{F}:Bun_G(\mathcal K)\to Bun_G(\mathcal
K')$ in the following way: for $\mathcal H$ an object in
$Bun_G(\mathcal K)$, the $K'$-graded Hilbert space
$\textbf{F}(\mathcal H)$ is the direct sum
$$\textbf{F}(\mathcal H)_y= \bigoplus_{\{x\in K: F(x)=y\}}\mathcal H_x.$$

The twisted $G$-action on $\textbf{F}(\mathcal H)$ is defined as
follows: take $h'_y \in \textbf{F}(\mathcal H)_y$ defined by the
element $h'_y = h_x$ for some vector $h_x \in \HH_x$ with
$F(x)=y$. Define the twisted $G$-action $\rhd'$ on $h'_y$ by
$$\sigma \rhd' h'_y : = \chi[\sigma|| x] (\sigma \rhd h_x).$$

The monoidal structure of the functor $\textbf{F}$ is defined as follows: take $\HH,
\HH'$ two objects in $Bun_G(\KK)$ and consider the map
\begin{eqnarray*}
  R:\textbf{F}(\mathcal H) \otimes' \textbf{F}(\HH') & \to & \textbf{F}(\HH \otimes
\HH')\\
h_{x_1} \otimes' h_{x_2} & \mapsto & \kappa(x_1,x_2)^{-1} h_{x_1}
\otimes h_{x_2}
\end{eqnarray*}
where $h_{x_1} \in \HH_{x_1}$ and $h_{x_2} \in \HH'_{x_2}$ but we
see them both as elements in $\textbf{F}(\HH)_{F(x_1)}$ and
$\textbf{F}(\HH')_{F(x_2)}$ respectively, and the element $h_{x_1}
\otimes h_{x_2}$ we see it as an element in $\textbf{F}(\HH
\otimes \HH')_{F(x_1x_2)}$. 



\end{proof}

\begin{proposition} \label{proposition 2-morphism of morphism of
pseudomonoids induce monoidal natural isomorphism}
 Given morphisms of pseudomonoids with strict unit in the 2-category of
$G$-sets with twists $\mathcal{F} =(F, \chi, \kappa)$ and
$\mathcal{F}' =(F, \chi', \kappa')$, with $\mathcal{F},
\mathcal{F}': \mathcal{K} \to \mathcal{K'}$, a 2-morphism
$\gamma:\mathcal{F} \Rightarrow \mathcal{F}'$, induces a monoidal
natural isomorphism between the monoidal functors $\rm \bf{F}$ and
$\rm \bf{F}'$.
\end{proposition}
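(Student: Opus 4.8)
The plan is to produce the monoidal natural isomorphism explicitly as ``multiplication by the scalars recorded by $\gamma$'', and then to observe that the two axioms it must satisfy are nothing but the two components of the cochain identity obeyed by $\gamma$.

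First I would unpack the data. By Proposition \ref{proposition 2-morphism of pseudomonoids}, the $2$-morphism $\gamma$ is a normalized cochain $\gamma\in C^{0,1}(K\rtimes G,\Tt)$, i.e.\ a map $\gamma\colon K\to\Tt$ with $\gamma[1_K]=1$ satisfying $\delta_G\gamma=\chi'/\chi$ and $\delta_K\gamma=\kappa/\kappa'$. Since $\mathcal F$ and $\mathcal F'$ share the \emph{same} underlying $G$-equivariant monoid homomorphism $F$, the functors $\mathbf F$ and $\mathbf F'$ built in Proposition \ref{proposition morphism pseudomonoids induce monoidal functor} coincide on underlying $K'$-graded Hilbert spaces and on underlying linear maps; they differ only in the twisted $G$-action (which uses $\chi$ versus $\chi'$) and in the monoidal structure maps $R$ versus $R'$ (which use $\kappa$ versus $\kappa'$). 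I would then set, for each object $\mathcal H=\bigoplus_{k\in K}\mathcal H_k$ of $Bun_G(\mathcal K)$,
\[
\eta_{\mathcal H}\colon \mathbf F(\mathcal H)\to \mathbf F'(\mathcal H),\qquad \eta_{\mathcal H}(h_x):=\gamma[x]^{-1}\,h_x\quad\text{for }h_x\in\mathcal H_x,
\]
extended linearly; this is a grading-preserving linear isomorphism because $\gamma[x]\in\Tt\subset\complex^\times$.

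Next I would check that each $\eta_{\mathcal H}$ is a morphism in $Bun_G(\mathcal K')$, i.e.\ that it intertwines the two twisted $G$-actions: on the summand $\mathcal H_x$ the action of $\mathbf F(\mathcal H)$ is $\sigma\mapsto\chi[\sigma||x]\,(\sigma\rhd-)$ and that of $\mathbf F'(\mathcal H)$ is $\sigma\mapsto\chi'[\sigma||x]\,(\sigma\rhd-)$, where $\rhd$ denotes the original $G$-action on $\mathcal H$. Using that $\sigma\rhd h_x$ lands in the summand indexed by $\sigma\cdot x$, the intertwining condition collapses to $\chi'[\sigma||x]/\chi[\sigma||x]=\gamma[x]/\gamma[\sigma\cdot x]$, which is precisely $\delta_G\gamma=\chi'/\chi$ spelled out with the formula for $\delta_G$ from Definition \ref{definition double complex}. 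Naturality of $\eta$ is then automatic: for a morphism $f$ in $Bun_G(\mathcal K)$ the maps $\mathbf F(f)$ and $\mathbf F'(f)$ agree as grading-preserving linear maps while $\eta$ acts by a scalar on each graded piece, so the naturality square commutes.

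Finally I would verify the monoidal-functor axioms. Evaluating $\eta_{\mathcal H\otimes\mathcal H'}\circ R$ and $R'\circ(\eta_{\mathcal H}\otimes\eta_{\mathcal H'})$ on an elementary tensor $h_{x_1}\otimes h_{x_2}$ with $h_{x_i}\in\mathcal H_{x_i}$ (whose image in $\mathcal H\otimes\mathcal H'$ has $K$-degree $x_1x_2$) and inserting $R(h_{x_1}\otimes h_{x_2})=\kappa(x_1,x_2)^{-1}h_{x_1}\otimes h_{x_2}$ from Proposition \ref{proposition morphism pseudomonoids induce monoidal functor}, the desired equality reduces to $\kappa(x_1,x_2)/\kappa'(x_1,x_2)=\gamma[x_1]\,\gamma[x_2]\,\gamma[x_1x_2]^{-1}$, which is exactly $\delta_K\gamma=\kappa/\kappa'$; compatibility with the unit objects holds because $F$ preserves $1_K$ (so $\mathbf F$ and $\mathbf F'$ both send $\underline{\complex}$ to $\underline{\complex}'$) and $\gamma[1_K]=1$, so $\eta_{\underline{\complex}}$ is the identity. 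Each $\eta_{\mathcal H}$ being invertible, $\eta$ is the sought monoidal natural isomorphism. I expect no conceptual obstacle; the only delicate point — and the likeliest place to slip — is the inverse/convention bookkeeping, namely keeping track of which graded summand a vector lies in before $\eta$ is applied and matching this against the chosen signs in $\delta_G$ and $\delta_K$ on $C^{0,1}$, so that the two coherence identities emerge exactly as displayed rather than as their inverses.
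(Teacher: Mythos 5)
Your proposal is correct and follows essentially the same route as the paper: you define the transformation $\mathbf{F}(\mathcal H)\to\mathbf{F}'(\mathcal H)$, $h_x\mapsto\gamma[x]^{-1}h_x$, and read off compatibility with the twisted $G$-action and with the monoidal structure maps from the identities $\delta_G\gamma=\chi'/\chi$ and $\delta_K\gamma=\kappa/\kappa'$ of Proposition \ref{proposition 2-morphism of pseudomonoids}, exactly as in the paper's (much terser) proof. Your additional checks of grading, naturality, and the unit object are just a more detailed spelling-out of the same argument.
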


\begin{proof}
Using the notation defined above, we define the transformation
between $\bf F$ and $\bf{F}'$ as follows:
\begin{eqnarray*}
  \textbf{F}(\HH) & \to & \textbf{F}'(\HH)\\
  h_x & \mapsto & \gamma(x)^{-1} \ h_x.
\end{eqnarray*}
Equations $\delta_1 \gamma= \chi'/\chi$ and $\delta_2 \gamma=
\kappa/\kappa'$ shown in the proof of Proposition \ref{proposition
2-morphism of pseudomonoids} imply that the transformation is
natural and monoidal, respectively.

\end{proof}

\subsection{Automorphisms of pseudomonoids and their action on the monoidal category of equivariant vector
bundles} Let us fix  $\mathcal{K}=(K,m,1,\alpha,\beta,\theta)$ a
pseudomonoid with strict unit in the 2-category of $G$-sets with
twists and let $(Bun_G(\mathcal K), \otimes,
\underline{\complex})$ be the monoidal category of $G$-equivariant
bundles over $K$.

Take $\mathcal{F} =(F, \chi, \kappa):\mathcal K \to \mathcal K$ an
invertible morphism of the pseudomonoid $\mathcal K$ and note that
Proposition \ref{proposition morphism pseudomonoids induce
monoidal functor} tells us that the induced monoidal functor
$$\textbf{F}:(Bun_G(\mathcal
K), \otimes, \underline{\complex}) \to (Bun_G(\mathcal K),
\otimes, \underline{\complex})$$ becomes an automorphism of the
monoidal category $(Bun_G(\mathcal K), \otimes,
\underline{\complex})$.

If we denote by $${\rm{Aut}}_{\rm Psmnd}(\KK)$$ the 2-group of
automorphisms of the pseudomonoid $\KK$, whose morphisms are
invertible morphisms $\FF : \KK \to \KK$ and whose 2-morphisms are
the natural transformations between functors $\gamma: \FF
\Rightarrow \FF'$, and
$${\rm{Aut}}_\otimes(Bun_G(\mathcal K))$$ the 2-group of
automorphisms of the monoidal category $(Bun_G(\mathcal K),
\otimes, \underline{\complex})$, whose morphisms are invertible
monoidal functors and whose 2-morphisms are monoidal natural
transformations, then we have that Propositions \ref{proposition
morphism pseudomonoids induce monoidal functor} and
\ref{proposition 2-morphism of morphism of pseudomonoids induce
monoidal natural isomorphism} imply that there is a 2-functor
\begin{align*}
{\rm{Aut}}_{\rm Psmnd}(\KK) & \to {\rm{Aut}}_\otimes(Bun_G(\mathcal K))\\
\gamma : \FF \Rightarrow \FF' & \mapsto {{\gamma}} :\textbf{F}
\Rightarrow \textbf{F}'
\end{align*}
from the 2-group of automorphisms of the pseudomonoid $\KK$ to the
2-group of automorpshisms of $Bun_G(\mathcal K)$.

To understand the previous action in more detail, let us start by
studying the category ${\rm{Aut}}_{\rm Psmnd}(\KK)$.

 An automorphism $\mathcal{F} =(F, \chi, \kappa):\mathcal K
\to \mathcal K$ consists of a $G$-equivariant automorphism $F\in {\rm{Aut}}_G(K)$, together with a degree 2 cochain $\chi
\oplus \kappa^{-1}$ in ${\rm Tot}^*(A^{*,*}(K \rtimes G, \Tt))$ such that
$$(\delta_G \oplus \delta_K^{(-1)^{p}})( \chi \oplus \kappa^{-1})
 = F^* \alpha / \alpha \oplus F^* \beta / \beta \oplus F^* \theta / \theta.$$
 
The automorphism $F$ lies on the image of the forgetful functor
\begin{align*}
  {\rm{Aut}}_{\rm Psmnd}(\KK) & \to {\rm Aut}_G(K) \\
\mathcal{F} =(F, \chi, \kappa) & \mapsto F;
\end{align*} if
and only if the cohomology classes $[\alpha \oplus \beta \oplus
\theta]$ and $F^*[\alpha \oplus \beta \oplus \theta]$ are equal as
cohomology classes in $H^2\left({\rm Tot}^*(A^{*,*}(K \rtimes G, \Tt))\right)$. If we
define
$${\rm Aut}_G(K;[\alpha \oplus \beta \oplus \theta]):=
 \{ F \in {\rm Aut}_G(K) | F^*[\alpha \oplus \beta \oplus \theta]=[\alpha \oplus \beta \oplus \theta]\} $$
we have that the 2-group of automorphisms of $\KK$ sits in the
exact sequence
$$\xymatrix{ 0 \ar[d]&&&\\
{\rm Tot}^1(A^{*,*}(K \rtimes G, \Tt))) \ar[d]^{\delta_G
\oplus \delta_K} && &\\
 Z^2 ({\rm Tot}^*(A^{*,*}(K \rtimes G, \Tt)))  \ar[r] & {\rm{Aut}}_{\rm Psmnd}(\KK)
\ar[r] & {\rm Aut}_G(K;[\alpha \oplus \beta \oplus \theta]) \ar[r]
& 0}$$ where $Z^2 ({\rm Tot}^*(A^{*,*}(K \rtimes G, \Tt)))$ denotes the degree
2-cocycles  and ${\rm Tot}^1(A^{*,*}(K \rtimes G, \Tt))$
parameterizes the 2-morphisms between the morphisms of $Z^2 ({\rm Tot}^*(A^{*,*}(K \rtimes G, \Tt)))  $.

If we take equivalence classes of automorphisms in
${\rm{Aut}}_{\rm Psmnd}(\KK)$ defined by the 2-morphisms, we
obtain a group which is usually denoted by
$$\pi_1 \left( {\rm{Aut}}_{\rm Psmnd}(\KK) \right);$$ this group sits in
the middle of the short exact sequence
$$
0  \to H^2 ({\rm Tot}^*(A^{*,*}(K \rtimes G, \Tt) ))\to \pi_1 \left( {\rm{Aut}}_{\rm
Psmnd}(\KK) \right) \to  {\rm Aut}_G(K;[\alpha \oplus \beta \oplus
\theta]) \to 0;$$
and by the Lyndon-Hochschild-Serre spectral sequence we know that there is an exact sequence
$$0 \to H^1(G, {\rm Hom}(K,
\Tt)) \to H^2 ({\rm Tot}^*(A^{*,*}(K \rtimes G, \Tt) ))
  \to H^2(K, \Tt)^G \stackrel{d_2}{\to} H^2(G, {\rm Hom}(K,\Tt)),$$
where $d_2$ is the differential of the second page.

 Furthermore, if we take the group of 2-morphisms of the identity morphism in
  ${\rm{Aut}}_{\rm Psmnd}(\KK)$, we obtain a group which is
  usually denoted by
$$\pi_2 \left( {\rm{Aut}}_{\rm Psmnd}(\KK) \right)$$
and is equal to $H^1( {\rm{Tot}}(A^{*,*}(K\rtimes G,\Tt)  ) )={\rm Hom}(K,
\Tt)^G$.

\subsection{The Grothendieck ring associated to the monoidal category}

Consider the isomorphism classes of objects in the monoidal
category $Bun_G(\mathcal K)$. Since the objects could be understood as finite dimensional vector spaces 
which are $K$-graded endowed with a projective $G$-action, we can add them up and
moreover we can multiply them by using the tensor product of the
monoidal category. What we obtain is a semi-ring which we can make
into a ring by applying the standard Grothendieck construction
argument of K-theory. Denoting by ${\rm{Groth}}\left(
Bun_G(\mathcal K)\right)$ the Grothendieck ring constructed from
the monoidal category $Bun_G(\mathcal K)$, we have a functor
\begin{align*} {\rm{Psmnd}}_G &\to  {\rm{Rings}} \\
{\mathcal{K}}& \mapsto {\rm{Groth}}\left( Bun_G(\mathcal K)\right)
\end{align*}
from the  2-category of pseudomonoids with strict unit in the
2-category $G$-sets with twists, to the category of rings.

The ring ${\rm{Groth}}\left( Bun_G(\mathcal K)\right)$ can also be
understood as the $\alpha$-twisted $G$-equivariant K-theory of the
monoid $K$ where the multiplication is induced by the pushforward
$m_*$ of the multiplication $m: K \times K \to K$. This twisted
K-theory ring was the main motivation of this work and is the
subject of the next section.

In the case in which $G=K$ and $G$ acts on $G$ by the left
adjoint action we have seen that any 3-cocycle $w \in Z^3(G;\Tt)$
induces a 3-cocycle $\alpha_w \oplus \beta_w \oplus \theta_w \in
Z^3({\rm{Tot}}^*(A^{*,*}(G \rtimes G, \Tt)))$ that makes $\mathcal K :=(G,m,1,\alpha_w,
\beta_w, \theta_w )$ into a pseudomonoid with strict unit in the
2-category $G$-sets with twists. In this case the Grothendieck ring
${\rm{Groth}}\left( Bun_G(\mathcal K)\right)$ is isomorphic to the
Grothendieck ring  of representations $${\rm{Groth}}({{\rm{Rep}}(D^w(G))})$$ of
the Twisted Drinfeld Double $D^w(G)$ of the group $G$ (see
\cite[Section 3]{Willerton}), which is also isomorphic to the $w$-twisted
stringy K-theory $${}^wK_{st}([G/G])$$ of the groupoid $[G/G]$
\cite[Prop. 18]{Willerton} c.f. \cite{AdemRuanZhang, BecerraUribe,
HuWang}.

\subsubsection{Automorphisms} Since the 2-functor
$${\rm{Aut}}_{\rm Psmnd}(\KK)  \to {\rm{Aut}}_\otimes(Bun_G(\mathcal
K))$$ induces a homomorphism
$$\pi_1({\rm{Aut}}_{\rm Psmnd}(\KK))  \to \pi_1({\rm{Aut}}_\otimes(Bun_G(\mathcal
K))),$$ we have that there is a homomorphism of groups
$$\pi_1({\rm{Aut}}_{\rm Psmnd}(\KK))  \to {\rm{Aut}}({\rm{Groth}}(Bun_G(\mathcal
K)))$$ which composed with the inclusion
$$H^2({\rm{Tot}}(A^{*,*}(K\rtimes G,\Tt))) \to \pi_1({\rm{Aut}}_{\rm
Psmnd}(\KK))$$ defines a homomorphism
\begin{align*} 
 H^2({\rm{Tot}}(A^{*,*}(K\rtimes G,\Tt))) \to
{\rm{Aut}}({\rm{Groth}}(Bun_G(\mathcal K))).\end{align*}

The previous morphism will be of interest when we compare it with
the group of automorphisms of the twisted equivariant K-theory
ring in section \ref{subsubsection:relation_between_automorphisms}.

\section{The fusion product and the twisted $G$-equivariant K-theory ring}\label{Section:twisted $G$-equivariant K-theory ring}

Whenever $X$ is a finite $G$-CW complex with $G$ a finite group,
the elements in $H^3_G(X;\integer)$ classify the isomorphism
classes of projective unitary stable and equivariant bundles over
$X$, and these bundles provide the required information to define
equivariant Fredholm bundles over $X$; the
homotopy groups of the space of section of a such bundle is one
way to define the twisted $G$-equivariant K-theory groups of $X$
(see \cite{BarcenasEspinozaUribeJoachim}). The homotopy classes of
automorphisms of a projective unitary stable and equivariant
bundle over $X$ are in one to one correspondence with $H^2_G(X;
\integer)$ and this group acts on the twisted $G$-equivariant
K-theory groups.

Whenever the space $X$ is a discrete $G$-set, there is an
equivalent but easier way to define the twisted $G$-equivariant
K-theory groups of $X$. Let us review it.

\subsection{Twisted $G$-equivariant K-theory}

Take a normalized 2-cocycle $\alpha: G \times G \times X \to \Tt$
and define an $\alpha$-twisted $G$-vector bundle over $X$ as a
finite dimensional $X$-graded complex  vector space $E$, which can alternatively be seen as
a finite dimensional complex vector bundle $p:E \to X$ with finite support, endowed with a $G$ action such
that $p$ is $G$ equivariant, the action of $G$ on the fibers is
complex linear, and such that the composition of the action on $E$
satisfies the equation
$$g\cdot (h \cdot z)= \alpha(g,h|| p(z)) (gh \cdot z)$$
for all $z$ in $E$. Two $\alpha$-twisted $G$-vector bundles over
$X$ are isomorphic if there exists a $G$ equivariant map $E \to
E'$ of complex vector bundles which is an isomorphism of vector
bundles.

\begin{definition}
  The $\alpha$-twisted $G$-equivariant K-theory of $X$ is the
  Grothendieck group $$\KU_G(X; \alpha)$$ associated to the semi-group
  of isomorphism classes of $\alpha$-twisted $G$-vector bundles over $X$
endowed with the direct sum operation.
\end{definition}

If we have a $G$-equivariant map $F:Y \to X$ then the pullback of
bundles induces a group homomorphism
$$F^*: \KU_G(X; \alpha) \to \KU_G(Y;F^*\alpha).$$

\subsubsection{} \label{subsubsection isomorphic twisted K-theory groups} For a normalized cochain $\chi \in C_G^1(X; \Tt)$
with $\delta_G\chi = \alpha'/\alpha$ then there is an induced
isomorphism of groups
\begin{align*}
 \overline{\chi}: \KU_G(X; \alpha) \stackrel{\cong}{\to} \KU_G(X; \alpha')
\end{align*}
where $\overline{\chi}(E):=E$ and the $G$-action $\cdot'$ on $z\in
\overline{\chi}(E)$ is given by the equation:
$$h \cdot' z:= \chi[h||p(z)] (h\cdot z).$$

Since cohomologous twistings induce isomorphic twisted K-theory
groups, we have that $H^2_G(X;\Tt)$ classifies the isomorphism
classes of twistings for the $G$-equivariant K-theory of $X$. And
since the isomorphisms $\overline{\chi}$ and
$\overline{\chi\cdot(\delta_G \gamma)}$ are equal, we have that
the group $H^1_G(X; \Tt)$ acts on the $\alpha$-twisted
$G$-equivariant K-theory group $\KU_G(X; \alpha)$ by
automorphisms.
\subsection{Pushforward}

For a $G$-equivariant map $F:Y \to X$ and $\alpha \in
Z^2_G(X;\Tt)$ there is a pushforward map
$$F_*: \KU_G(Y; F^*\alpha) \to \KU_G(X; \alpha)$$
defined at the level of vector bundles as follows
$$(F_* E)_x:= \bigoplus_{\{y \in Y | F(y)=x\}} E_y$$
where the $G$-action on $F_*E$ is the one induced by the $G$-action on $Y$ and the $G$-action on $E$.

\subsection{External product}

If we consider two $G$-sets with twist $(X,\alpha_X)$ and $(Y,
\alpha_Y)$, the external product is the homomorphism
\begin{align*}
\KU_G(X; \alpha_X) \times \KU_G(Y; \alpha_Y)
\stackrel{\boxtimes}{\to} \KU_G(X \times Y; \pi_1^*\alpha_X \cdot
\pi_2^*\alpha_Y)
\end{align*}
where $(E \boxtimes F)_{(x,y)} := E_x \otimes F_y$ and $\pi_1$ and
$\pi_2$ denote the projections of $X \times Y$ on the first and
the second coordinate respectively.
\subsection{Multiplicative structures on Twisted Equivariant K-theory}\label{Subsection:Multiplicative structures on Twisted Equivariant K-theory}
In the particular case on which the $G$-set $X$ is endowed with
the additional structure of a $G$-equivariant multiplication map
$$m: X \times X \to X$$
and moreover that the cohomology class $[\alpha]$ of the twisting
is multiplicative i.e. $\pi_1^*[\alpha] \cdot
\pi_2^*[\alpha]=m^*[\alpha]$, then the $\alpha$-twisted
$G$-equivariant K-theory group can be endowed with a product
structure. This construction could be done for $G$-equivariant
H-spaces, but for clarity we will restrict ourselves to the case
on which the $G$-set is a $G$-equivariant monoid with unit.

Let $K$ be a $G$-equivariant discrete monoid with unit and denote
by $m:K \times K \to K$ the multiplication of the monoid. Take a
twist $\alpha \in Z_G^2(K; \Tt)$ that is multiplicative, i.e. that
there exist a cochain $\beta \in C_G^1(K \times K; \Tt)$ such that
$$\delta_G\beta= \frac{m^*\alpha}{\pi_1^*\alpha \cdot
\pi_2^*\alpha}$$or equivalently $\delta_G\beta \cdot \delta_K
\alpha=1$, then we can compose the following morphisms
\begin{align*}
  \KU_G(K; \alpha) \times \KU_G(K; \alpha)
\stackrel{\boxtimes}{\To} \KU_G(K \times K; \pi_1^*\alpha \cdot
\pi_2^*\alpha) \stackrel{\overline{\beta}}{\To} & \\
\KU_G(K \times K; & m^*\alpha) \stackrel{m_*}{\To}\KU_G(K; \alpha)
\end{align*}
thus producing a product structure
\begin{align*}
 \star_\beta: \KU_G(K; \alpha) \times \KU_G(K; \alpha) & \to \KU_G(K;
 \alpha)\\
 (E,F) & \mapsto m_*(\overline{\beta}(\boxtimes(E,F))).
\end{align*}

It is a simple calculation to see that the product $\star$
previously defined is associative whenever the cohomology class
$[\beta] \in H^1_G(K \times K; \Tt)$ satisfies the equation
$$\delta_K[\beta]=1$$
as a cohomology class in $H^1_G(K \times K \times K; \Tt)$. We
therefore have that if there exists a cochain $\theta \in C^0_G(K
\times K \times K ; \Tt)$ such that
\[\delta_G \theta= \delta_K \beta\]
then the product $\star$ previously defined endows the group
$\KU_G(K; \alpha)$ with a ring structure. Summarizing:

\begin{proposition}
  Consider $\alpha \in Z_G^2(K;\Tt)$ and $\beta \in C^{1,2}(K \rtimes G; \Tt)$
  satisfying the equations $$\delta_G \beta \cdot \delta_K\alpha=1,
  \ \ \ \ \delta_2[\beta]=1.$$
Then the group $\KU_G(K;\alpha)$ endowed with the product
structure $\star_\beta$ becomes a ring. Let us denote this ring by
$$\KU_G(K; \alpha,\beta):= (\KU_G(K;\alpha), \star_\beta)$$
and let us call the pair $(\alpha,\beta)$ a multiplicative
structure for $K$.
\end{proposition}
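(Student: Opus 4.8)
The plan is to check the ring axioms for $(\KU_G(K;\alpha),\star_\beta)$ directly at the level of $\alpha$-twisted $G$-bundles, keeping the additive structure already carried by $\KU_G(K;\alpha)$. Distributivity is essentially free: the external product $\boxtimes$, the twisting-change isomorphism $\overline{\beta}$ constructed above, and the pushforward $m_*$ each commute with direct sums in every variable, so $\star_\beta=m_*\circ\overline{\beta}\circ\boxtimes$ is biadditive, giving $(E\oplus E')\star_\beta F\cong(E\star_\beta F)\oplus(E'\star_\beta F)$ and the symmetric identity. The real content is therefore associativity and the existence of a two-sided unit.

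For associativity I would unwind both bracketings. Using the projection formula $(m_*V)\boxtimes H\cong(m\times\mathrm{id})_*(V\boxtimes H)$ for the finite-to-one map $m\times\mathrm{id}$, the naturality of $\overline{(-)}$ with respect to pullback, and the associativity of the monoid multiplication (so that $m\circ(m\times\mathrm{id})=m\circ(\mathrm{id}\times m)$ is the iterated product $m^{(3)}\colon K^3\to K$), one finds that $(E\star_\beta F)\star_\beta H$ is the $K$-graded bundle whose fibre over $u$ is $\bigoplus_{xyz=u}E_x\otimes F_y\otimes H_z$, with the $G$-action on the summand indexed by $(x,y,z)$ twisted relative to the diagonal action by the scalar $\beta[g||x|y]\cdot\beta[g||xy|z]$, whereas $E\star_\beta(F\star_\beta H)$ is the \emph{same} underlying $K$-graded bundle but with the $G$-action twisted by $\beta[g||y|z]\cdot\beta[g||x|yz]$. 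The ratio of these two cochains, viewed as a function of $g\in G$ and $[x|y|z]\in K^3$, is precisely $(\delta_K\beta)[g||x|y|z]^{\pm1}$. The hypothesis $\delta_2[\beta]=1$, i.e.\ $[\delta_K\beta]=1$ in $H^1_G(K\times K\times K;\Tt)$, states exactly that $\delta_K\beta$ is a $G$-equivariant coboundary, so $\delta_K\beta=\delta_G\theta$ for some $\theta\in C^{0,3}(K\rtimes G;\Tt)$; then the fibrewise scalar $\theta[x|y|z]^{\mp1}$ on the summand over $(x,y,z)$ is a $G$-equivariant isomorphism of $\alpha$-twisted bundles identifying the two bracketings, natural in $E,F,H$. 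Since only isomorphism classes matter in the Grothendieck group, this yields $(a\star_\beta b)\star_\beta c=a\star_\beta(b\star_\beta c)$ for all $a,b,c$; no pentagon coherence is needed.

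For the unit I take the class $[\underline{\complex}]$ of the one-dimensional bundle supported at $1_K$ with trivial $G$-action. One first notes that $\alpha$ is automatically normalized in the $K$-variable at $1_K$ --- evaluate $\delta_G\beta=m^*\alpha/(\pi_1^*\alpha\,\pi_2^*\alpha)$ at the $G$-fixed point $(1_K,1_K)$ and use that $\beta\in C^{1,2}(K\rtimes G;\Tt)$ is normalized --- so that $\underline{\complex}$ is a genuine $\alpha$-twisted $G$-bundle. Then $[\underline{\complex}]\star_\beta[E]$ is computed as follows: $\underline{\complex}\boxtimes E$ is $E$ supported on $\{1_K\}\times K$; $\overline{\beta}$ twists the fibre over $(1_K,x)$ by $\beta[g||1_K|x]$; and $m_*$ along $(1_K,x)\mapsto 1_K x=x$ returns to $K$. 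Because $\beta$ is normalized in the $K$-variables we have $\beta[g||1_K|x]=1$, so $[\underline{\complex}]\star_\beta[E]=[E]$; the identity $\beta[g||x|1_K]=1$ gives $[E]\star_\beta[\underline{\complex}]=[E]$ in the same way. Hence $[\underline{\complex}]$ is a two-sided unit and $(\KU_G(K;\alpha),\star_\beta)$ is a ring.

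The only genuine difficulty I anticipate is bookkeeping: keeping the sign and inverse conventions of the total differential $\delta_G\oplus\delta_K^{(-1)^p}$ straight, so that the discrepancy between the two associativity bracketings is exactly $\delta_K\beta$ rather than its inverse or a degree-shifted variant, and carefully justifying the projection-formula compatibility of $m_*$ with $\boxtimes$ for maps with finite fibres. Everything else follows formally from the constructions of the preceding subsections.
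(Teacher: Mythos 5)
Your proposal is correct and follows essentially the same route as the paper: the paper defines $\star_\beta=m_*\circ\overline{\beta}\circ\boxtimes$ and simply asserts ("a simple calculation") that the condition $[\delta_K\beta]=1$, i.e.\ $\delta_K\beta=\delta_G\theta$, gives associativity; your argument is precisely that calculation written out, with the fibrewise scalar $\theta[x|y|z]^{\mp1}$ providing the equivariant isomorphism between the two bracketings. The additional checks you include (biadditivity, normalization of $\alpha$ at $1_K$, and the unit $[\underline{\complex}]$) are routine and consistent with the paper's conventions.
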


\subsubsection{} Many of the features of the twisted
$G$-equivariant K-theory rings are better understood if we work
with the notation introduced in section \ref{Section_preliminaries}.

Recall that the double complex $A^{*,*}:=A^{*,*}(K \rtimes G ; \Tt)$ is the subcomplex of
$C^{*,*}(K \rtimes G ; \Tt)$ disregarding the 0-th row.
Consider the subcomplex $A^{*,*>3}$ of $A^{*,*}$ defined as
subcomplex of $C^{*,*}(K \rtimes G ; \Tt)$ where we disregard the
first four rows.

The double complex $A^{*,*}/A^{*,*>3}$ consists of the second, third
and fourth rows of the double complex $C^{*,*}(K \rtimes G ;
\Tt)$, and we have that if $\KU_G(K;\alpha)$ can be made into a
ring is because there exists $\beta$ and $\theta$ such that the
cochain $\alpha \oplus \beta \oplus \theta$ becomes a 3-cocycle in
the complex ${\rm{Tot}}^*(A^{*,*}/A^{*,*>3})$ and the 3-cocycle can be
seen in a diagram as follows

\begin{tikzpicture}
\matrix [matrix of math nodes,row sep=5mm]
{
 4 &  [5mm]  |(a)|  & [5mm]   & [5mm]  & [5mm] & [5mm] \\
3 & |(b)| \theta & |(c)| 1  &  & & \\
2&   & |(d)| \beta & |(e)| 1&  & \\
1& &  & |(f)| \alpha & |(g)| 1 \\
& 0 & 1 & 2 & 3&\\
};
\tikzstyle{every node}=[midway,auto,font=\scriptsize]
\draw[thick] (-1.8,-1.7) -- (-1.8,2.2) ;
\draw[thick] (-1.8,-1.7) -- (2.2,-1.7) ;
\draw[-stealth] (b) -- node {$\delta_G$} (c);
\draw[-stealth] (d) -- node {$(\delta_K)^{-1}$} (c);
\draw[-stealth] (d) -- node {$\delta_G$} (e);
\draw[-stealth] (f) -- node {$\delta_K$} (e);
\draw[-stealth] (f) -- node {$\delta_G$} (g);
\end{tikzpicture}

\begin{proposition}
  If the cocycle $\alpha \in Z^2_G(K; \Tt)$ can be lifted to a
  3-cocycle $\alpha \oplus \beta \oplus \theta$ in the complex ${\rm{Tot}}^*(A^{*,*}/A^{*,*>3})$ then the
  the group $\KU_G(K; \alpha)$ can be endowed with the ring
  structure $\KU_G(K; \alpha, \beta)$.
\end{proposition}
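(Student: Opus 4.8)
The plan is to unwind what it means for $\alpha \oplus \beta \oplus \theta$ to be a $3$-cocycle in ${\rm{Tot}}^*(A^{*,*}/A^{*,*>3})$, to recognize that this data is exactly a multiplicative structure $(\alpha,\beta)$ for $K$ together with a choice of trivialization $\theta$, and then to invoke the proposition on multiplicative structures proved earlier in this section.

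Concretely, I would first record that $A^{*,*}/A^{*,*>3}$ is the double complex obtained from the rows $q=1,2,3$ of $C^{*,*}(K \rtimes G;\Tt)$ with the differentials $\delta_G$ and $\delta_K$, so that a degree-$3$ cochain of its total complex is precisely a triple $\alpha \in C^{2,1}(K \rtimes G;\Tt)$, $\beta \in C^{1,2}(K \rtimes G;\Tt)$, $\theta \in C^{0,3}(K \rtimes G;\Tt)$. Applying the total differential $\delta_G \oplus \delta_K^{(-1)^p}$ to such a triple gives, exactly as in the diagram following Proposition \ref{proposition pseudomonoid with strict unit 3-cocycle}, the four-term expression
\[
\delta_G\alpha \ \oplus\ \delta_K\alpha\cdot\delta_G\beta \ \oplus\ (\delta_K\beta)^{-1}\cdot\delta_G\theta \ \oplus\ \delta_K\theta ,
\]
with components in bidegrees $(3,1),(2,2),(1,3)$ and $(0,4)$. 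The last component lies in row $q=4$ and is therefore zero in the quotient $A^{*,*}/A^{*,*>3}$; hence $\alpha\oplus\beta\oplus\theta$ being a $3$-cocycle is equivalent to the three equations $\delta_G\alpha=1$, $\delta_G\beta\cdot\delta_K\alpha=1$, and $\delta_G\theta=\delta_K\beta$.

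The first equation is the standing hypothesis $\alpha\in Z^2_G(K;\Tt)$, and the second is the multiplicativity condition $\delta_G\beta\cdot\delta_K\alpha=1$. For the third, one checks, using that $\delta_G$ and $\delta_K$ commute on the double complex together with $\delta_G\beta=(\delta_K\alpha)^{-1}$, that $\delta_G(\delta_K\beta)=\delta_K(\delta_G\beta)=1$; thus $\delta_K\beta$ is a $\delta_G$-cocycle in $C^{1,3}(K\rtimes G;\Tt)$ representing the class $\delta_K[\beta]\in H^1_G(K\times K\times K;\Tt)$, and the existence of $\theta$ with $\delta_G\theta=\delta_K\beta$ is exactly the assertion that this class vanishes, i.e.\ $\delta_2[\beta]=1$ in the notation used above. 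So the lift supplies a $\beta$ satisfying both $\delta_G\beta\cdot\delta_K\alpha=1$ and $\delta_2[\beta]=1$, and the earlier proposition on multiplicative structures then yields that $\star_\beta$ is a well-defined associative product on $\KU_G(K;\alpha)$, so $\KU_G(K;\alpha,\beta)=(\KU_G(K;\alpha),\star_\beta)$ is a ring, as claimed.

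I do not expect a genuine obstacle: the argument is bookkeeping plus a citation of the preceding result. The point worth stating carefully is why the truncation $A^{*,*>3}$ is the correct one --- passing to the quotient kills precisely the $(0,4)$-component $\delta_K\theta$ of the total differential, which is exactly the slack one needs, since the construction of $\star_\beta$ uses only that $\delta_K\beta$ is a $\delta_G$-coboundary and never that $\theta$ is $\delta_K$-closed. Demanding instead $\alpha\oplus\beta\oplus\theta\in Z^3({\rm{Tot}}^*(A^{*,*}))$ would be strictly stronger and would correspond to the existence of a pseudomonoid categorifying this ring, in the sense of Section \ref{section:Definiciones categoricas}.
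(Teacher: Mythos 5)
Your proof is correct and follows essentially the same route as the paper: the proposition there is stated without a separate proof because it is exactly the preceding discussion, namely that a $3$-cocycle in ${\rm{Tot}}^*(A^{*,*}/A^{*,*>3})$ unwinds to $\delta_G\alpha=1$, $\delta_G\beta\cdot\delta_K\alpha=1$ and $\delta_G\theta=\delta_K\beta$ (the $(0,4)$-component $\delta_K\theta$ being killed in the quotient), which are precisely the hypotheses of the earlier proposition defining the ring $\KU_G(K;\alpha,\beta)$. Your closing remark on why the truncation at $A^{*,*>3}$ is the right one matches the paper's subsequent comparison with $Z^3({\rm{Tot}}^*(A^{*,*}))$ and pseudomonoid structures.
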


\subsubsection{} \label{subsubsection spectral sequence} 
Let us see another way to understand the conditions
under which the twist $\alpha$ can define a multiplicative
structure on $\KU_G(K; \alpha)$.  Consider the filtration of the
double complex $A^{*,*}$ given by the subcomplexes $F^r:= A^{*,*\geq
r}$. The spectral sequence that the filtration defines abuts to
the cohomology of the total complex of $A^{*,*}$
$$ E_\infty^{*,*} \Rightarrow H^*({\rm{Tot}}(A^{*,*}))$$ and has for first page
$$E_1^{p,q} = H^p_G(K^q; \Tt)$$ with differential $$d_1: E_1^{p,q}
\to E_1^{p,q+1}, \ \ \ d_1[x] := [(\delta_K)^{(-1)^p} x].$$

If we have a twist $\alpha$, its cohomology class $[\alpha]$ is an
element in $E_1^{2,1}$. The element $d_1[\alpha]$ is the first
obstruction to lift $\alpha$ to a 3-cocycle in
${\rm{Tot}}^*(A^{*,*})$, that is, $d_1[\alpha]=1$ if and only if
there exists $\beta \in C^{1,2}(K \rtimes G; \Tt)$ such that
$\delta_G \beta \cdot \delta_K\alpha = 1$. Note furthermore that
$$d_1[\alpha] =  \frac{m^*[\alpha]}{\pi_1^*[\alpha] \cdot
\pi_2^*[\alpha]}$$ and therefore we recover what we knew, namely
that the twist $\alpha$ may induce a product in $\KU_G(K;\alpha)$
if and only if the cohomology class $[\alpha]$ is multiplicative.

If the cohomology class $[\alpha]$ is multiplicative, then
$[\alpha]$ survives to the second page of the spectral sequence
with $[\alpha] \in E_2^{2,1}$. The second differential applied to
$[\alpha]$ is $d_2[\alpha] = [ (\delta_2 \beta)^{-1}]$

\begin{tikzpicture}
\matrix [matrix of math nodes,row sep=5mm]
{3 &  [5mm]   |(a)|  &  [5mm]  |(b)|  (\delta_K\beta)^{-1}   &   [5mm]   \\
2&    & |(c)|  \beta& |(d)| 1   \\
1& &   & |(e)|  \alpha  \\
& 0 & 1 & 2 \\
};
\tikzstyle{every node}=[midway,auto,font=\scriptsize]
\draw[thick] (-1.7,-1.3) -- (-1.7,2.0) ;
\draw[thick] (-1.7,-1.3) -- (2.0,-1.3) ;
\draw[-stealth] (e) -- node {$\delta_K$} (d);
\draw[-stealth] (c) -- node {$\delta_G$} (d);
\draw[-stealth] (c) -- node {$(\delta_K)^{-1}$} (b);
\end{tikzpicture}
\begin{tikzpicture}
\matrix [matrix of math nodes,row sep=5mm]
{3 &  [5mm]   |(a)|  &  [5mm]  |(b)|  [(\delta_K\beta)^{-1}]   &   [5mm]   \\
2&    & |(c)|  & |(d)|    \\
1& &   & |(e)|  [\alpha]  \\
& 0 & 1 & 2 \\
};
\tikzstyle{every node}=[midway,auto,font=\scriptsize]
\draw[thick] (-1.7,-1.3) -- (-1.7,2.0) ;
\draw[thick] (-1.7,-1.3) -- (2.0,-1.3) ;
\draw[-stealth] (e) -- node {$d_2$} (b);
\end{tikzpicture}
 
\noindent and this is
the second obstruction to lift $\alpha$ to a 3-cocycle in
${\rm{Tot}}^*(A^{*,*})$, i.e. $d_2[\alpha] =1$ if and only if there
exists $\theta \in C^{0,3}(K \rtimes G , \Tt)$ such that $\delta_G
\theta (\delta_K\beta)^{-1}=1$. Note that $d_2[\alpha]$ measures
the obstruction for the multiplication $\star_\beta$ in
$\KU_G(K;\alpha)$ to be associative.

We have then that the equations $d_1[\alpha]=1=d_2[\alpha]$ are
the equations that need to be satisfied in order for the group
$\KU_G(K;\alpha)$ to become a ring with respect to the
construction provided in this section.

If $[\alpha]$ survives to the third page we have that $d_3[\alpha]= [\delta_K
\theta]$ and therefore $d_3[\alpha]=1$ implies that $\alpha \oplus \beta
\oplus \theta$ is a 3-cocycle in ${\rm{Tot}}^*(A^{*,*})$. So if
$[\alpha]$ survives to the fourth page, and hence the page at
infinity, then $\mathcal{K}=(K,m,1,\alpha,\beta,\theta)$ is a
pseudomonoid with strict unit in the 2-category of $G$-sets with
twists. Summarizing:

\begin{proposition} \label{proposition obstruction for alpha to be
multiplicative}
  Consider $\alpha \in Z_G^2(K;\Tt)$, then $\alpha$  can be lifted to a
  3-cocycle $\alpha \oplus \beta \oplus \theta $ in ${\rm{Tot}}^*(A^{*,*}/A^{*,*>3})$
  if and only if $d_1[\alpha]=1=d_2[\alpha]$, and this implies that $\KU_G(K;\alpha,\beta)$ becomes a ring.
   If furthermore
  $d_3[\alpha]=1$ then $\alpha \oplus \beta \oplus \theta $ is a
  3-cocycle in ${\rm{Tot}}(A^{*,*})$ and this implies that $\mathcal{K}=(K,m,1,\alpha,\beta,\theta)$ is a
pseudomonoid with strict unit in the 2-category of $G$-sets with
twists, and in this case
$$\KU_G(K;\alpha,\beta) \cong {\rm{Groth}}\left( Bun_G(\mathcal
K)\right).$$
\end{proposition}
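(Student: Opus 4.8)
The plan is to translate the two lifting conditions into the vanishing of the successive obstructions $d_1[\alpha],d_2[\alpha]$ (resp. $d_1[\alpha],d_2[\alpha],d_3[\alpha]$) of the spectral sequence discussed in \S\ref{subsubsection spectral sequence}, and then to feed the resulting cochains into the two facts already established: the criterion that makes the product $\star_\beta$ associative, and Proposition \ref{proposition pseudomonoid with strict unit 3-cocycle}, which identifies pseudomonoids with strict unit over a fixed $G$-equivariant monoid with $3$-cocycles of ${\rm{Tot}}^*(A^{*,*}(K\rtimes G,\Tt))$. The starting point is that a degree-$3$ cochain of ${\rm{Tot}}^*(A^{*,*})$ is automatically of the form $\alpha\oplus\beta\oplus\theta$ with $\alpha\in C^{2,1}$, $\beta\in C^{1,2}$, $\theta\in C^{0,3}$ (the row $q=0$ being absent from $A^{*,*}$), and that, writing out $D=\delta_G\oplus\delta_K^{(-1)^p}$, such a cochain is a cocycle of ${\rm{Tot}}^*(A^{*,*}/A^{*,*>3})$ exactly when
\[
\delta_G\alpha=1,\qquad \delta_K\alpha\cdot\delta_G\beta=1,\qquad \delta_G\theta=\delta_K\beta,
\]
and a cocycle of the full complex ${\rm{Tot}}^*(A^{*,*})$ exactly when, in addition, $\delta_K\theta=1$. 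The first equation holds for free because $\alpha\in Z^2_G(K;\Tt)$.

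For the first equivalence, in the forward direction a lift exhibits $\delta_K\alpha=\delta_G(\beta^{-1})$ as a $\delta_G$-coboundary, so $d_1[\alpha]=[\delta_K\alpha]=1$, and then, for this $\beta$, the relation $\delta_K\beta=\delta_G\theta$ shows that the representative $[(\delta_K\beta)^{-1}]$ of $d_2[\alpha]$ vanishes. For the converse, $d_1[\alpha]=1$ provides a $\beta$ with $\delta_K\alpha\cdot\delta_G\beta=1$, and then $d_2[\alpha]=1$ provides --- after replacing $\beta$, if necessary, by its product with a $\delta_G$-cocycle of $C^{1,2}$, a change which leaves $\delta_K\alpha\cdot\delta_G\beta=1$ intact --- a $\theta$ with $\delta_G\theta=\delta_K\beta$; these are precisely the assertions recorded in \S\ref{subsubsection spectral sequence}. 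Given such a lift, the equation $\delta_G\theta=\delta_K\beta$ exhibits $\delta_K\beta$ as a $\delta_G$-coboundary, which is exactly the condition under which, as established earlier, the product $\star_\beta$ on $\KU_G(K;\alpha)$ is associative; hence $\KU_G(K;\alpha,\beta):=(\KU_G(K;\alpha),\star_\beta)$ is a ring.

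For the clause involving $d_3$, I would note that $\delta_K\theta$ is automatically a $\delta_G$-cocycle, since $\delta_G(\delta_K\theta)=\delta_K(\delta_G\theta)=\delta_K(\delta_K\beta)=1$, and that $d_3[\alpha]$ is represented by its class $[\delta_K\theta]$. Because every higher differential $d_r[\alpha]$ with $r\ge 4$ lands in a term of negative $\delta_G$-degree and so vanishes, the class $[\alpha]$ is a permanent cycle of the spectral sequence precisely when $d_1[\alpha]=d_2[\alpha]=d_3[\alpha]=1$, and by convergence of the spectral sequence this is equivalent to saying that, after a possible further adjustment of $\beta$ and $\theta$ of the same harmless sort as above, the cochain $\alpha\oplus\beta\oplus\theta$ is a genuine $3$-cocycle of ${\rm{Tot}}^*(A^{*,*})$. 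Since $(K,m,1_K)$ is a $G$-equivariant monoid with unit, Proposition \ref{proposition pseudomonoid with strict unit 3-cocycle} then yields that $\mathcal{K}=(K,m,1,\alpha,\beta,\theta)$ is a pseudomonoid with strict unit in the $2$-category of $G$-sets with twists. For the final ring isomorphism I would compare formulas directly: an object of $Bun_G(\mathcal{K})$ is precisely a $K$-graded finite dimensional complex vector space carrying a $G$-action twisted by $\alpha$ on each homogeneous summand, i.e. an $\alpha$-twisted $G$-vector bundle over $K$, so the underlying abelian groups of ${\rm{Groth}}(Bun_G(\mathcal{K}))$ and $\KU_G(K;\alpha)$ coincide, as do the unit objects; and unwinding $m_*\circ\overline{\beta}\circ\boxtimes$ gives $(E\star_\beta F)_k=\bigoplus_{xy=k}E_x\otimes F_y$ with $G$-action $\sigma\cdot(e_x\otimes f_y)=\beta[\sigma||x|y]\,(\sigma\cdot e_x\otimes\sigma\cdot f_y)$, which is verbatim the tensor product of $Bun_G(\mathcal{K})$. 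Hence the identity map on underlying groups is a ring isomorphism, the associativity constraint $\Theta$ built from $\theta$ merely recording the isomorphism under which the two products agree, which is all that survives in the Grothendieck group.

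I expect the only genuine difficulty to be bookkeeping: one must keep track of the successive choices of $\beta$, and then of $\theta$, made while killing $d_1$, $d_2$ and $d_3$, since these differentials live on subquotients and at each stage one may be forced to twist $\beta$ (resp. $\theta$) by a $\delta_G$-cocycle. The point requiring care is that such corrections do not disturb the equations already obtained, so that a single choice of $\beta$ (and $\theta$) serves simultaneously to define the ring $\KU_G(K;\alpha,\beta)$ and to build the pseudomonoid $\mathcal{K}$; once this is checked, the statement follows by assembling the pieces above.
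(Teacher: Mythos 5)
Your proposal is correct and follows essentially the same route as the paper: the proposition there is stated as a summary of the preceding discussion in the subsection on the spectral sequence of the filtration $A^{*,*\geq r}$ (where $d_1[\alpha]=[\delta_K\alpha]$, $d_2[\alpha]=[(\delta_K\beta)^{-1}]$, $d_3[\alpha]=[\delta_K\theta]$ are identified with the successive lifting obstructions), combined with the associativity criterion for $\star_\beta$ and Proposition \ref{proposition pseudomonoid with strict unit 3-cocycle}. Your explicit bookkeeping of the adjustments of $\beta$ and $\theta$ by $\delta_G$-cocycles, and the direct comparison of $\star_\beta$ with the tensor product of $Bun_G(\mathcal K)$, just make explicit what the paper leaves implicit.
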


\subsection{Isomorphism classes of Multiplicative structures}

In this section we want to determine some sufficient conditions
under which the twisted $G$-equivariant K-theory rings $\KU_G(K;
\alpha,\beta)$ and $\KU_G(K; \alpha',\beta')$ induced by the
multiplicative structures $(\alpha, \beta)$ and $(\alpha',\beta')$
become isomorphic.

Let us consider the double complex
$A^{*,*}/{A}^{*,*>2}$ which consists of the first and the second row of
the double complex ${C}^{*,*}(K \rtimes G, \Tt)$. We claim

\begin{lemma} \label{lemma isomorphic multiplicative structures}
  Consider $(\alpha, \beta)$ and $(\alpha',\beta')$ multiplicative
  structures on $K$. If $\alpha \oplus \beta$ and $\alpha' \oplus \beta'$
  are cohomologous as 3-cocycles in ${\rm{Tot}}^*(A^{*,*}/A^{*,*>2})$,
  then the rings $\KU_G(K; \alpha,\beta)$ and $\KU_G(K;
  \alpha',\beta')$ are isomorphic.
\end{lemma}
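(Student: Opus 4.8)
The plan is to show that the very cochain witnessing the hypothesis produces the desired ring isomorphism. Unravelling ``$\alpha\oplus\beta$ and $\alpha'\oplus\beta'$ are cohomologous as $3$-cocycles in ${\rm{Tot}}^*(A^{*,*}/A^{*,*>2})$'' by the same computation as in the proof of Proposition \ref{proposition morphism of pseudomonoids with strict unit}, specialised to $F=\id_K$ and with the row-$3$ component discarded, it means precisely that there are normalized cochains $\chi\in C^{1,1}(K\rtimes G,\Tt)$ and $\kappa\in C^{0,2}(K\rtimes G,\Tt)$ with
\[
\delta_G\chi=\alpha'/\alpha,\qquad (\delta_K\chi)^{-1}(\delta_G\kappa)^{-1}=\beta'/\beta .
\]
(The cochains $\theta,\theta'$ play no role, which is exactly why it suffices to work in the quotient by $A^{*,*>2}$: the fusion product $\star_\beta$ depends only on $(\alpha,\beta)$.) The first equation is the hypothesis of Section \ref{subsubsection isomorphic twisted K-theory groups}, so $\chi$ induces a group isomorphism $\overline{\chi}\colon \KU_G(K;\alpha)\xrightarrow{\cong}\KU_G(K;\alpha')$; its inverse is $\overline{\chi^{-1}}$, and since $\chi$ is normalized, $\overline{\chi}$ fixes the one-dimensional unit object. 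It remains to check that $\overline{\chi}$ intertwines $\star_\beta$ and $\star_{\beta'}$.

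First I would record the cochain identity that drives the argument. Using $\delta_K\chi=\chi\boxtimes\chi\cdot(m^*\chi)^{-1}$ in $C^{0,2}=C^0_G(K\times K,\Tt)$ (the identity noted in the proof of Proposition \ref{proposition morphism of pseudomonoids with strict unit}), the second displayed equation rearranges to
\[
\beta'\cdot(\chi\boxtimes\chi)=(\beta\cdot m^*\chi)\cdot(\delta_G\kappa)^{-1}.
\]
Then I would chase the four-step composite $E\star_\beta F=m_*\!\big(\overline{\beta}(E\boxtimes F)\big)$ through the defining diagram of \S\ref{Subsection:Multiplicative structures on Twisted Equivariant K-theory}. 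Step (a): the external product is natural for changes of twist, so $\overline{\chi}(E)\boxtimes\overline{\chi}(F)=\overline{\chi\boxtimes\chi}\,(E\boxtimes F)$ in $\KU_G(K\times K;\pi_1^*\alpha'\cdot\pi_2^*\alpha')$, which is immediate from the definitions of $\boxtimes$ and of $\overline{(-)}$. Step (b): applying $\overline{\beta'}$ and invoking both the identity above and the fact (Section \ref{subsubsection isomorphic twisted K-theory groups}) that $\overline{\mu}=\overline{\mu\cdot\delta_G\gamma}$ for any $\gamma\in C^0_G$ — taken here with $\gamma=\kappa^{-1}$ — gives $\overline{\beta'}\circ\overline{\chi\boxtimes\chi}=\overline{m^*\chi}\circ\overline{\beta}$ as maps into $\KU_G(K\times K;m^*\alpha')$. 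Step (c): the pushforward is natural for change of twist along $m$ — since $\overline{m^*\chi}$ only rescales the $G$-action on a vector over $(x,y)$ by $\chi[g\,\|\,m(x,y)]$, while $m_*$ reindexes the fibre over $z\in K$ by $m^{-1}(z)$, the modification descends to the one defining $\overline{\chi}$ — hence $m_*\circ\overline{m^*\chi}=\overline{\chi}\circ m_*$. Chaining (a)--(c),
\[
\overline{\chi}(E\star_\beta F)=m_*\overline{m^*\chi}\,\overline{\beta}(E\boxtimes F)=m_*\overline{\beta'}\big(\overline{\chi}(E)\boxtimes\overline{\chi}(F)\big)=\overline{\chi}(E)\star_{\beta'}\overline{\chi}(F),
\]
so $\overline{\chi}$ is a ring isomorphism.

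The main obstacle is steps (b) and (c): verifying that the change-of-twist operators $\overline{(-)}$ commute with the external product and, in particular, with the pushforward $m_*$, and bookkeeping the leftover discrepancy so that it is exactly $\delta_G(\kappa^{-1})$ on $K\times K$. These verifications are routine once the definitions are written out, but the normalizations and the fact that $\kappa$ lives on $K\times K$ (so that $\delta_G\kappa$ is visible to $\overline{(-)}$) must be tracked with care. Everything else — that $\overline{\chi}$ is a bijection, additive, and unital — is already contained in Section \ref{subsubsection isomorphic twisted K-theory groups}.
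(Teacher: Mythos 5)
Your proposal is correct and follows essentially the same route as the paper: extract $\chi\in C^{1,1}$ and $\kappa\in C^{0,2}$ from the cohomologousness hypothesis, use $\overline{\chi}$ as the isomorphism, and absorb the discrepancy $\delta_G\kappa$ via the invariance $\overline{\mu}=\overline{\mu\cdot\delta_G\gamma}$. The only difference is that you spell out explicitly (via the compatibility of $\overline{(-)}$ with $\boxtimes$ and $m_*$) the intertwining step that the paper asserts when it says $\overline{\chi}$ induces a ring isomorphism onto $\KU_G(K;\alpha',\beta'')$ with $\beta''=\beta(\delta_K\chi)^{-1}$.
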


\begin{proof}
If $\alpha \oplus \beta$ and $\alpha' \oplus \beta'$
  are cohomologous as 3-cocycles in ${\rm{Tot}}^*(A^{*,*}/A^{*,*>2})$,
  then there exists a 2-cochain  $\chi \oplus \kappa^{-1}$ with $\chi
  \in A^{1,1}$ and $\kappa \in A^{0,2}$ such that
  $$\delta_G \oplus  \delta_K^{{(-1)}^{p}}( \chi \oplus \kappa^{-1}) = \alpha'/\alpha \oplus
  \beta'/\beta,$$
  namely that $\delta_G \chi = \alpha'/\alpha$ and $(\delta_K \chi)^{-1} (\delta_G \kappa)^{-1}= \beta'/\beta$,  or diagramatically

\begin{tikzpicture}
\matrix [matrix of math nodes,row sep=5mm]
{3 &  [5mm]   |(a)|  &  [5mm]  |(c)|    &   [5mm]   \\
2& |(b)| \kappa^{-1}  & |(c)|  \beta' / \beta& |(e)|    \\
1& &  |(d)| \chi & |(e)|  \alpha' / \alpha  \\
& 0 & 1 & 2 \\
};
\tikzstyle{every node}=[midway,auto,font=\scriptsize]
\draw[thick] (-1.7,-1.3) -- (-1.7,2.0) ;
\draw[thick] (-1.7,-1.3) -- (2.0,-1.3) ;
\draw[-stealth] (b) -- node {$\delta_G$} (c);
\draw[-stealth] (d) -- node {$(\delta_K)^{-1}$} (c);
\draw[-stealth] (d) -- node {$\delta_G$} (e);
\end{tikzpicture}

The isomorphism $\overline{\chi}: \KU_G(K; \alpha)
\stackrel{\cong}{\to} \KU_G(K; \alpha')$ induces an isomorphism of
rings
$$\overline{\chi}: \KU_G(K; \alpha,\beta) \stackrel{\cong}{\to} \KU_G(K;
\alpha', \beta'')$$ where $\beta'':= \beta (\delta_K\chi)^{-1}$.
Since $(\delta_K \chi)^{-1} (\delta_G \kappa)^{-1}= \beta'/\beta$,
we have that $\beta''= \beta' (\delta_G \kappa)$, therefore the
isomorphism $\overline{\beta''}$ and $\overline{\beta'}$ are equal
and we obtain that $\KU_G(K; \alpha', \beta'') \cong \KU_G(K;
\alpha', \beta')$.
\end{proof}

The short exact sequence of complexes
$$0\to A^{*,*>2}/A^{*,*>3} \to A^{*,*}/A^{*,*>3} \to A^{*,*}/A^{*,*>2} \to
0$$ induces a long exact sequence in cohomology groups
\begin{align*}
  \to H^3({\rm{Tot}}^*(A^{*,*}/A^{*,*>3})) & \to &
  H^3({\rm{Tot}}^*(A^{*,*}/A^{*,*>2}))& \to &
  H^4({\rm{Tot}}^*(A^{*,*>2}/A^{*,*>3})) & \to \\
  [\alpha \oplus \beta \oplus \theta] & \mapsto & [\alpha \oplus
  \beta]& \mapsto & [(\delta_2 \beta)^{-1}] &
\end{align*}
and we see that Lemma \ref{lemma isomorphic multiplicative structures} and Proposition 
\ref{proposition obstruction for alpha to be multiplicative} imply that the subgroup
$$MS_G(K):=\{[\alpha \oplus \beta] \in H^3({\rm{Tot}}^*(A^{*,*}/A^{*,*>2})) | [\delta_K
\beta]=1 \}$$
 is precisely the group of equivalence classes of
multiplicative structures associated to the
  twisted $G$-equivariant K-theory of the monoid $K$. We define
  \begin{definition}
  The group 
  $$MS_G(K):=\{[\alpha \oplus \beta] \in H^3({\rm{Tot}}(A^{*,*}/A^{*,*>2})) | [\delta_2
\beta]=1 \}$$ 
will be called the group of {\it {multiplicative structures}} for the $G$-equivariant K-theory of the monoid $K$.
  \end{definition}
  
And therefore we have that
  
\begin{proposition}
  The elements of the group   $MS_G(K)$ are in one to one correspondence with the
  set of isomorphism classes of ring structures (in the sense of Lemma \ref{lemma isomorphic multiplicative structures}) on the
  twisted $G$-equivariant K-theory to the monoid $K$.
\end{proposition}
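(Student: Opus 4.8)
The plan is to recognize this statement as a bookkeeping repackaging of Lemma~\ref{lemma isomorphic multiplicative structures}, Proposition~\ref{proposition obstruction for alpha to be multiplicative}, and the long exact sequence attached to $0\to A^{*,*>2}/A^{*,*>3}\to A^{*,*}/A^{*,*>3}\to A^{*,*}/A^{*,*>2}\to 0$. First I would unwind the definition of a multiplicative structure: it is a pair $(\alpha,\beta)$ with $\alpha\in Z^2_G(K;\Tt)$, $\delta_G\beta\cdot\delta_K\alpha=1$ and $[\delta_K\beta]=1$. Since the differential of ${\rm{Tot}}^*(A^{*,*}/A^{*,*>2})$ only involves the two bottom rows (the component $\delta_K\beta$, landing in the discarded third row, is killed in the quotient), the first two conditions say precisely that $z:=\alpha\oplus\beta$ is a $3$-cocycle of ${\rm{Tot}}^*(A^{*,*}/A^{*,*>2})$. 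Thus a multiplicative structure is exactly a $3$-cocycle representative $z=\alpha\oplus\beta$ of a class in $H^3({\rm{Tot}}^*(A^{*,*}/A^{*,*>2}))$ subject to the extra constraint $[\delta_K\beta]=1$, and by the Proposition immediately preceding the definition of $MS_G(K)$ every such pair indeed equips $\KU_G(K;\alpha)$ with the ring structure $\star_\beta$.

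Next I would introduce the comparison map $\Phi$ sending a multiplicative structure $(\alpha,\beta)$ to the class $[\alpha\oplus\beta]\in H^3({\rm{Tot}}^*(A^{*,*}/A^{*,*>2}))$, and check that it lands in $MS_G(K)$. The key observation is that the auxiliary quantity $[\delta_K\beta]\in H^1_G(K^3;\Tt)$ depends only on $[\alpha\oplus\beta]$: it is the image of $[\alpha\oplus\beta]$ under the connecting homomorphism $\partial\colon H^3({\rm{Tot}}^*(A^{*,*}/A^{*,*>2}))\to H^4({\rm{Tot}}^*(A^{*,*>2}/A^{*,*>3}))$ of the short exact sequence above, together with the identification $H^4({\rm{Tot}}^*(A^{*,*>2}/A^{*,*>3}))\cong H^1_G(K^3;\Tt)$ coming from the fact that this quotient complex is concentrated in the third row — this is precisely the computation $[\alpha\oplus\beta]\mapsto[(\delta_K\beta)^{-1}]$ recorded just before the definition of $MS_G(K)$. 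Hence $MS_G(K)=\ker\partial$, so $\Phi$ takes values in $MS_G(K)$ and, being a function of the cohomology class, is constant on cohomologous cocycles.

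Then I would prove that $\Phi$ induces a bijection on the relevant equivalence classes. For surjectivity: given $c\in MS_G(K)$, pick any cocycle $\alpha\oplus\beta$ representing it; since $c\in\ker\partial$, $c$ lifts to $H^3({\rm{Tot}}^*(A^{*,*}/A^{*,*>3}))$, equivalently there exists $\theta\in C^{0,3}(K\rtimes G,\Tt)$ with $\delta_G\theta=\delta_K\beta$, so $d_1[\alpha]=1=d_2[\alpha]$; by Proposition~\ref{proposition obstruction for alpha to be multiplicative} the pair $(\alpha,\beta)$ is a genuine multiplicative structure and $c=\Phi(\alpha,\beta)$. For well-definedness on isomorphism classes and injectivity: by Lemma~\ref{lemma isomorphic multiplicative structures}, whenever $\Phi(\alpha,\beta)=\Phi(\alpha',\beta')$, i.e. $\alpha\oplus\beta$ and $\alpha'\oplus\beta'$ are cohomologous in ${\rm{Tot}}^*(A^{*,*}/A^{*,*>2})$, the rings $\KU_G(K;\alpha,\beta)$ and $\KU_G(K;\alpha',\beta')$ are isomorphic; conversely, two ring structures being ``isomorphic in the sense of Lemma~\ref{lemma isomorphic multiplicative structures}'' means exactly that the corresponding cocycles are cohomologous, i.e. $\Phi(\alpha,\beta)=\Phi(\alpha',\beta')$. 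Therefore $\Phi$ descends to a bijection between the isomorphism classes of ring structures (in the sense of the Lemma) and $MS_G(K)$, noting in passing that transitivity of ``cohomologous'' is what makes this an honest equivalence relation.

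I expect the only genuine content — and the step that will need the most care — to be the identification of $[\delta_K\beta]$ with the connecting homomorphism of the displayed short exact sequence, and the consequent independence of $[\delta_K\beta]=1$ from the chosen representative; once that is in place, everything else is a translation between the language of multiplicative structures $(\alpha,\beta)$ and the language of $3$-cocycles in the three nested quotient complexes $A^{*,*}/A^{*,*>3}$, $A^{*,*}/A^{*,*>2}$ and $A^{*,*>2}/A^{*,*>3}$, which has essentially already been carried out in the paragraphs leading up to the definition of $MS_G(K)$.
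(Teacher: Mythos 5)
Your proposal is correct and takes essentially the same route as the paper, which presents this proposition as a direct consequence of the long exact sequence associated to $0\to A^{*,*>2}/A^{*,*>3}\to A^{*,*}/A^{*,*>3}\to A^{*,*}/A^{*,*>2}\to 0$ together with Lemma \ref{lemma isomorphic multiplicative structures} and Proposition \ref{proposition obstruction for alpha to be multiplicative}. Your explicit identification of $[\delta_K\beta]$ with the connecting homomorphism, making membership in $MS_G(K)$ independent of the chosen representative, is precisely the point the paper itself uses when it later writes $MS_G(K)={\rm{ker}}(\bar{\nu})$.
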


In particular we have that there are at most $\#(MS_G(K))$ of
different multiplicative structures in the twisted $G$-equivariant
K-theory groups of $K$.

\subsection{Automorphisms of the twisted equivariant K-theory
ring}

From section \ref{subsubsection isomorphic twisted K-theory
groups} we know that if $\chi \in C_G^1(K; \Tt)$ satisfies
$\delta_G \chi =1$ then the map $\overline{\chi}$ induces an
isomorphism of groups
$$ \overline{\chi}: \KU_G(K; \alpha) \stackrel{\cong}{\to} \KU_G(K;
\alpha).$$ Whenever $(\alpha,\beta)$ is a multiplicative
structure, it follows that the map $\overline{\chi}$ induces an
isomorphism of rings whenever the homomorphism $\overline{\delta_K
\chi}$ is the identity map, namely that $[\delta_K \chi]=1$ as a
cohomology class in $H^1_G(K\times K;\Tt)$. If we define the group
of multiplicative elements by
\begin{align*}
  H^1_G(K;\Tt)_{\rm{mult}} := \left\{ [\chi] \in H^1_G(K;\Tt) | \delta_K[\chi]=
  \pi_1^*[\chi] \cdot \pi_2^*[\chi] \cdot m^*[\chi]^{-1}=1 \right\}
\end{align*}
we have then that the group of automorphisms of the twisted
$G$-equivariant K-theory ring is equal to the multiplicative
elements in $H^1_G(K;\Tt)$, i.e.
\begin{align*}
  {\rm{Aut}}(\KU_G(K;\alpha,\beta)) = H^1_G(K;\Tt)_{mult}
\end{align*}

Using the spectral sequence of section \ref{subsubsection
spectral sequence} we see that the multiplicative terms appear in
the second page of the spectral sequence
$$H^1_G(K;\Tt)_{\rm{mult}} = E_2^{1,1}$$
since $d_1[\chi] =[\delta_K \chi]^{-1}$ is the obstruction of being multiplicative. If
furthermore a multiplicative element satisfies $d_2[\chi]=1$, then
we have that $[\chi]$ can be lifted to an element $[\chi \oplus
\kappa]$ in $H^2({\rm{Tot}}(A^{*,*}))$. This means that we have
the exact sequence \begin{align} \label{exact sequence for H2(A)}
0\to H^2(C^*(K;\Tt)^G) \to H^2({\rm{Tot}}(A^{*,*})) \to
H^1_G(K;\Tt)_{\rm{mult}} \stackrel{d_2}{\to}
H^3(C^*(K;\Tt)^G)\end{align} where the $G$-invariant cochains
come from the first page of the spectral sequence, i.e.
$E_1^{0,q}= C^q(K;\Tt)^G$, and its cohomology appears in the
second page, i.e. $E_2^{0,q}= H^q(C^*(K;\Tt)^G, \delta_K)$.

\subsection{Relation between the Grothendieck ring associated to a
monoidal category and the twisted equivariant K-theory ring}

Let us consider a $G$-equivariant monoid with unit $K$. We have
seen that to  a pseudomonoid with strict unit in the 2-category
of $G$-sets with twist $\mathcal{K}=(K,m,1,\alpha,\beta,\theta)$
over $K$ we can associate the ring ${\rm{Groth}}(Bun_G(\KK))$ of
isomorphism classes of objects in the monoidal category
$Bun_G(\KK)$. At the same time, since $\alpha\oplus \beta \oplus
\theta \in Z^3({\rm{Tot}}^*(A^{*,*}(K \rtimes G, \Tt)))$, then $(\alpha,\beta)$ is a
multiplicative structure and we get that
$${\rm{Groth}}(Bun_G(\KK)) \cong \KU_G(K;\alpha,\beta)$$
as rings. We have therefore a canonical map
\begin{align} \label{map between A and MS}
H^3({\rm{Tot}}(A^{*,*})) \to & MS_G(K)\\
[\alpha\oplus \beta \oplus
\theta] \mapsto & [\alpha\oplus \beta] \nonumber \end{align}
from the isomorphism classes of pseudomonoid structures with strict unit of $G$-sets with twist over $K$, to
the group of multiplicative structures of the $G$-equivariant twisted K-theory groups of $K$. 
Let us understand this map in more detail.

\vspace{0.5cm}

Consider the projection homomorphism between the exact sequences
of complexes
$$\xymatrix{
0\ar[r] & A^{*,*>2} \ar[d]^p \ar[r]^\iota & A^{*,*} \ar[d]^\pi \ar[r]^\phi &
A^{*,*}/A^{*,*>2} \ar[r]  \ar[d]^{\cong}& 0 \\
0\ar[r] & A^{*,*>2}/A^{*,*>3} \ar[r]^{\bar{\iota}} & A^{*,*}/A^{*,*>3} \ar[r]^{\bar{\phi}} &
A^{*,*}/A^{*,*>2} \ar[r] & 0 }$$ and the homomorphism between the
long exact sequences induced
$$\xymatrix{
  H^3({\rm{Tot}}(A^{*,*}) \ar@{>->}[d]^\pi \ar[r]^\phi &
  H^3({\rm{Tot}}(A^{*,*}/A^{*,*>2})) \ar[d]^{\cong} \ar[r]^{\nu} &
  H^4({\rm{Tot}}(A^{*,*>2}) \ar[d]^p \ar[r]^(0.7)\iota & \\
   H^3({\rm{Tot}}(A^{*,*}/A^{*,*>3})) \ar[r]^{\bar{\phi}} &
  H^3({\rm{Tot}}(A^{*,*}/A^{*,*>2})) \ar[r]^{\bar{\nu}} &
  H^4({\rm{Tot}}(A^{*,*>2}/A^{*,*>3})) \ar[r]^(0.7){\bar{\iota}} &
}$$ 
where $\nu$ and $\bar{\nu}$ are the connection homomorphisms with $\nu[\alpha \oplus \beta] = \delta_2[\beta]$.

By definition $$MS_G(K)= {\rm{ker}}(\bar{\nu})$$ and furthermore note that the map $\pi$ is injective since $H^3({\rm{Tot}}(A^{*,*>3}))=0$.
Therefore the canonical map defined in \eqref{map between A and MS} is precisely the map
$$H^3({\rm{Tot}}(A^{*,*})) \stackrel{\phi}{\To}  MS_G(K).$$ 
In complete generality it is difficult to give an explicit description of how the kernel and the cokernel of the map 
$H^3({\rm{Tot}}(A^{*,*})) \stackrel{\phi}{\To}  MS_G(K)$ looks like, but for calculations we can give the following description:

\begin{theorem} \label{theorem relation between H(A) and MS}
 The kernel of the homomorphism $H^3({\rm{Tot}}(A^{*,*})) \stackrel{\phi}{\To}  MS_G(K)$ is isomorphic to
\begin{align*}
{\rm{ker}}\left(H^3({\rm{Tot}}(A^{*,*})) \stackrel{\phi}{\To}  MS_G(K)\right) \cong \iota \left(H^3(C^*(K,\Tt)^G)\right)
\end{align*}
and the cokernel is isomorphic to 
\begin{align*}
{\rm{coker}}\left(H^3({\rm{Tot}}(A^{*,*})) \stackrel{\phi}{\To}  MS_G(K)\right) \cong {\rm{ker}}  \left( H^4(C^*(K,\Tt)^G) \to
H^4({\rm{Tot}}(A^{*,*})) \right).
\end{align*}
\end{theorem}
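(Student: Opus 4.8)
The plan is to read off both statements from the two rows of long exact sequences displayed just above the theorem, after recording a few elementary facts about the truncated complexes. First I would note that $A^{*,*>2}$ and $A^{*,*>3}$ are concentrated, respectively, in total degrees $\geq 3$ and $\geq 4$, because a bidegree $(p,q)$ with $p\geq 0$ and $q>r$ has $p+q>r$. Hence $H^3({\rm{Tot}}(A^{*,*>2}))$ has no coboundaries and equals the group $Z^3(C^*(K,\Tt)^G)$ of those $x\in C^{0,3}(K\rtimes G,\Tt)$ with $\delta_Gx=1$ (i.e.\ $G$-invariant) and $\delta_Kx=1$; similarly $H^4({\rm{Tot}}(A^{*,*>3}))=Z^4(C^*(K,\Tt)^G)$, while $H^3({\rm{Tot}}(A^{*,*>2}/A^{*,*>3}))=C^3(K,\Tt)^G$ since that quotient is the single $q=3$ row with differential $\delta_G$. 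The second ingredient is that the inclusion of $C^*(K,\Tt)^G$ as the $0$-th column of $A^{*,*}$ is a chain map into ${\rm{Tot}}(A^{*,*})$: for $G$-invariant $y$ one has $\delta_Gy=1$, so the total differential of $y$ is $\pm\delta_Ky$, which again lies in the $0$-th column and is again $G$-invariant; the same holds with $A^{*,*}$ replaced by $A^{*,*>2}$. Combining these, the inclusion-induced map $H^3({\rm{Tot}}(A^{*,*>2}))=Z^3(C^*(K,\Tt)^G)\to H^3({\rm{Tot}}(A^{*,*}))$ annihilates $B^3(C^*(K,\Tt)^G)$ — a coboundary $\delta_Ky$ of a $G$-invariant $y$ becomes the genuine total coboundary of $y$ inside ${\rm{Tot}}(A^{*,*})$ — hence factors through $H^3(C^*(K,\Tt)^G)$ with unchanged image; likewise $H^4({\rm{Tot}}(A^{*,*>3}))\to H^4({\rm{Tot}}(A^{*,*>2}))$ factors through $H^4(C^*(K,\Tt)^G)$.

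For the kernel, since $MS_G(K)$ is a subgroup of $H^3({\rm{Tot}}(A^{*,*}/A^{*,*>2}))$ and $\phi$ lands in it, the kernel of $\phi\colon H^3({\rm{Tot}}(A^{*,*}))\to MS_G(K)$ equals the kernel of $\phi$ in the top long exact sequence of $0\to A^{*,*>2}\stackrel{\iota}{\to} A^{*,*}\stackrel{\phi}{\to}A^{*,*}/A^{*,*>2}\to 0$, which by exactness is the image of $\iota\colon H^3({\rm{Tot}}(A^{*,*>2}))\to H^3({\rm{Tot}}(A^{*,*}))$. By the last remark above this image is $\iota\bigl(H^3(C^*(K,\Tt)^G)\bigr)$, which is the first assertion.

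For the cokernel I would run a diagram chase on the commutative square of connecting maps. Using the vertical isomorphism of the displayed diagram to identify the two copies of $H^3({\rm{Tot}}(A^{*,*}/A^{*,*>2}))$, commutativity gives $\bar{\nu}=p\circ\nu$, so $MS_G(K)=\ker\bar{\nu}=\ker(p\circ\nu)$; exactness of the top row gives ${\rm{im}}\,\phi=\ker\nu$. Therefore ${\rm{coker}}\,\phi=\ker(p\circ\nu)/\ker\nu$, and the map $\nu$ carries this quotient isomorphically onto ${\rm{im}}\,\nu\cap\ker p$. Now ${\rm{im}}\,\nu=\ker\bigl(\iota\colon H^4({\rm{Tot}}(A^{*,*>2}))\to H^4({\rm{Tot}}(A^{*,*}))\bigr)$ by exactness of the top row, and by the long exact sequence of $0\to A^{*,*>3}\to A^{*,*>2}\to A^{*,*>2}/A^{*,*>3}\to 0$ the subgroup $\ker p$ is the image of $H^4({\rm{Tot}}(A^{*,*>3}))\to H^4({\rm{Tot}}(A^{*,*>2}))$; moreover the connecting homomorphism out of $H^3({\rm{Tot}}(A^{*,*>2}/A^{*,*>3}))=C^3(K,\Tt)^G$ into $H^4({\rm{Tot}}(A^{*,*>3}))=Z^4(C^*(K,\Tt)^G)$ is $\pm\delta_K$, with image $B^4(C^*(K,\Tt)^G)$, so this map descends to an \emph{injection} $H^4(C^*(K,\Tt)^G)\hookrightarrow H^4({\rm{Tot}}(A^{*,*>2}))$ with image $\ker p$. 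Intersecting with $\ker\iota$ and pulling back along this injection converts ${\rm{im}}\,\nu\cap\ker p$ into $\ker\bigl(H^4(C^*(K,\Tt)^G)\to H^4({\rm{Tot}}(A^{*,*}))\bigr)$, because $\iota$ composed with the injection is exactly the $0$-th column map in total degree $4$. This gives the second assertion.

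The routine work I would leave implicit is the verification of the three small cohomology identifications of the first paragraph and the explicit formulas for the connecting maps. The main obstacle is organizational rather than computational: one must check that each inclusion-induced map, restricted to the subcomplex of $G$-invariant cochains sitting in the $0$-th column, truly factors through $H^*(C^*(K,\Tt)^G)$ with the claimed image, and that the connecting homomorphism feeding $\ker p$ is precisely $\pm\delta_K$, since it is exactly the resulting injectivity of $H^4(C^*(K,\Tt)^G)\hookrightarrow H^4({\rm{Tot}}(A^{*,*>2}))$ that upgrades a generic short exact sequence description of the cokernel to the clean kernel-of-a-single-map formula in the statement.
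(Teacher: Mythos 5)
Your argument is correct and follows essentially the same route as the paper: the same ladder of long exact sequences identifies $\ker\phi$ with the image of $H^3({\rm{Tot}}(A^{*,*>2}))=Z^3(K,\Tt)^G$, hence with $\iota\left(H^3(C^*(K,\Tt)^G)\right)$, and identifies ${\rm{coker}}\,\phi$ with $\ker\iota\cap\ker p$ inside $H^4({\rm{Tot}}(A^{*,*>2}))$, transported along the identification $\ker p\cong H^4(C^*(K,\Tt)^G)$. The only, harmless, deviation is that you establish $\ker p\cong H^4(C^*(K,\Tt)^G)$ via the long exact sequence of $0\to A^{*,*>3}\to A^{*,*>2}\to A^{*,*>2}/A^{*,*>3}\to 0$ with the connecting map computed explicitly as $\delta_K$, whereas the paper invokes the spectral sequence of the filtration of ${\rm{Tot}}(A^{*,*>2})$ — an equivalent and, if anything, more explicit verification of the same fact.
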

\begin{proof} Let us start with the kernel of the map $\phi$.
From the long exact sequences of cohomologies defined above we get that
\begin{align*}
{\rm{ker}}\left(H^3({\rm{Tot}}(A^{*,*})) \stackrel{\phi}{\To}  MS_G(K)\right) \cong \iota \left( H^3({\rm{Tot}}(A^{*,*>2})) \right) 
\end{align*}
where the group
$H^3({\rm{Tot}}(A^{*,*>2}))$ consists of elements in $C^{0,3}$ that are closed under the differentials $\delta_G$ and $\delta_K$
and therefore $$H^3({\rm{Tot}}(A^{*,*>2})) = Z^3(K,\Tt)^G.$$
Now, since the elements $\delta_K( C^2(K,\Tt)^G)$ are all zero in $H^3({\rm{Tot}}(A^{*,*}))$ we have that 
$$\iota \left( Z^3(K,\Tt)^G \right) = \iota \left( H^3\left( C^*(K,\Tt)^G \right)\right)$$ and therefore
\begin{align*}
{\rm{ker}}\left(H^3({\rm{Tot}}(A^{*,*})) \stackrel{\phi}{\To}  MS_G(K)\right) \cong \iota \left(H^3(C^*(K,\Tt)^G)\right).
\end{align*}

\vspace{0.5cm}

For the cokernel we have that the long exact sequence in cohomologies defined above implies that
\begin{align*}
{\rm{coker}}\left(H^3({\rm{Tot}}(A^{*,*})) \stackrel{\phi}{\To}  MS_G(K)\right) \cong H^4({\rm{Tot}}(A^{*,*>2})) \cap {\rm{ker}}(p) \cap {\rm{ker}}(\iota).
\end{align*}
Now, the projection map $$p : H^4({\rm{Tot}}(A^{*,*>2})) \to H^4({\rm{Tot}}(A^{*,*>2}/A^{*,*>3}))= H^1_G(K^3,\Tt)$$ has for kernel the elements 
 in the fourth cohomology of the $G$-invariant $K$-chains 
 $${\rm{ker}}(p) = H^4(C^*(K,\Tt)^G),$$
and this follows from the spectral sequence that converges $H^*({\rm{Tot}}(A^{*,*>2}))$ associated to the filtration 
$ {\rm{Tot}}(A^{*>q,*})$. Since the natural homomorphism  
$$ H^4(C^*(K,\Tt)^G) \to  H^4({\rm{Tot}}(A^{*,*}))$$
coincides with the map $\iota$, we have the desired isomorphism
\begin{align*}
{\rm{coker}}\left(H^3({\rm{Tot}}(A^{*,*})) \stackrel{\phi}{\To}  MS_G(K)\right) \cong {\rm{ker}}  \left( H^4(C^*(K,\Tt)^G) \to
H^4({\rm{Tot}}(A^{*,*})) \right).
\end{align*}
\end{proof}

For calculation purposes let us understand the kernel and the cokernel of the map $\phi$
from the point of view of the spectral sequence of section \ref{subsubsection spectral sequence}.
\begin{proposition} \label{proposition calculation of kernel and cokernel mu}
Consider the filtration of the complex ${\rm{Tot}}(A^{*,*})$ defined by the subcomplexes ${\rm{Tot}}(A^{*,*>q})$ and
consider the spectral sequence that it defines which converges to $H^*({\rm{Tot}}(A^{*,*})$. Then we have the isomorphisms
\begin{align*}
{\rm{ker}}\left(H^3({\rm{Tot}}(A^{*,*})) \stackrel{\phi}{\To}  MS_G(K)\right) & \cong E_4^{0,3}\\
{\rm{coker}}\left(H^3({\rm{Tot}}(A^{*,*})) \stackrel{\phi}{\To}  MS_G(K)\right) & \cong d_3(E_3^{2,1}) + d_2(E_2^{1,2}).
\end{align*}
In the particular case on which the spectral sequence collapses at the second page we conclude that
\begin{align*}
0 \to H^3(C^*(K,\Tt)^G) \to H^3({\rm{Tot}}(A^{*,*})) \stackrel{\phi}{\To}  MS_G(K) \to 0.
\end{align*}
\end{proposition}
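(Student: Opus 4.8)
The plan is to reduce everything to Theorem~\ref{theorem relation between H(A) and MS}, which already identifies ${\rm{ker}}(\phi)$ with $\iota\left(H^3(C^*(K,\Tt)^G)\right)\subseteq H^3({\rm{Tot}}(A^{*,*}))$ and ${\rm{coker}}(\phi)$ with ${\rm{ker}}\left(H^4(C^*(K,\Tt)^G)\to H^4({\rm{Tot}}(A^{*,*}))\right)$, and then to rewrite these two groups in terms of the spectral sequence of Section~\ref{subsubsection spectral sequence}, attached to the filtration $F^r={\rm{Tot}}(A^{*,*\geq r})$. First I would record the relevant bookkeeping. Because $A^{*,*}$ is concentrated in rows $q\geq 1$, we have ${\rm{Tot}}^n(A^{*,*\geq n+1})=0$, hence $F^{n+1}H^n({\rm{Tot}}(A^{*,*}))=0$, so that the bottom filtration step $E_\infty^{0,n}=F^nH^n({\rm{Tot}}(A^{*,*}))$ is a genuine subgroup of $H^n({\rm{Tot}}(A^{*,*}))$. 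Moreover $E_1^{0,q}=C^q(K,\Tt)^G$ with $d_1=\delta_K$, so $E_2^{0,q}=H^q(C^*(K,\Tt)^G)$, and the composite $E_2^{0,n}\twoheadrightarrow E_3^{0,n}\twoheadrightarrow\cdots\twoheadrightarrow E_\infty^{0,n}\hookrightarrow H^n({\rm{Tot}}(A^{*,*}))$ is precisely the natural homomorphism $H^n(C^*(K,\Tt)^G)\to H^n({\rm{Tot}}(A^{*,*}))$ appearing in Theorem~\ref{theorem relation between H(A) and MS}, since both send a $G$-invariant $\delta_K$-cocycle, viewed in $C^{0,n}\subseteq{\rm{Tot}}^n(A^{*,*})$, to its cohomology class.

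For the kernel, Theorem~\ref{theorem relation between H(A) and MS} identifies ${\rm{ker}}(\phi)$ with $\iota\left(H^3({\rm{Tot}}(A^{*,*>2}))\right)$, the image of $H^3({\rm{Tot}}(A^{*,*\geq 3}))$ in $H^3({\rm{Tot}}(A^{*,*}))$, which is $F^3H^3({\rm{Tot}}(A^{*,*}))$; by the bookkeeping this equals $E_\infty^{0,3}$. Tracking the differentials $d_r\colon E_r^{p,q}\to E_r^{p-r+1,q+r}$, for $r\geq 3$ no differential enters or leaves bidegree $(0,3)$: the outgoing target $(1-r,3+r)$ has negative $G$-degree, and the incoming source $(r-1,3-r)$ has row index $\leq 0$ and hence vanishes. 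Thus $E_r^{0,3}=E_{r+1}^{0,3}$ for all $r\geq 3$, so $E_\infty^{0,3}=E_4^{0,3}$ and ${\rm{ker}}(\phi)\cong E_4^{0,3}$.

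For the cokernel, Theorem~\ref{theorem relation between H(A) and MS} together with the bookkeeping identifies ${\rm{coker}}(\phi)$ with the kernel of the edge map $E_2^{0,4}\to H^4({\rm{Tot}}(A^{*,*}))$, i.e.\ of the composite $E_2^{0,4}\twoheadrightarrow E_3^{0,4}\twoheadrightarrow E_4^{0,4}=E_\infty^{0,4}\hookrightarrow H^4({\rm{Tot}}(A^{*,*}))$. Its kernel is the accumulated kernel of the two surjections: the first kills $d_2(E_2^{1,2})$, the only $d_2$ landing in bidegree $(0,4)$ coming from $(1,2)$; the second kills the preimage of $d_3(E_3^{2,1})$, the only $d_3$ landing there coming from $(2,1)$; and for $r\geq 4$ a hypothetical $d_r$ into $(0,4)$ would have source $(r-1,4-r)$ of row index $\leq 0$, hence vanishes. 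This gives ${\rm{coker}}(\phi)\cong d_3(E_3^{2,1})+d_2(E_2^{1,2})$, with $d_3(E_3^{2,1})$ understood through its preimage in $E_2^{0,4}=H^4(C^*(K,\Tt)^G)$. If the spectral sequence collapses at the second page, then $d_r=0$ for all $r\geq 2$, so $E_4^{0,3}=E_2^{0,3}=H^3(C^*(K,\Tt)^G)$ and $d_3(E_3^{2,1})+d_2(E_2^{1,2})=0$, and the two computations above assemble into the exact sequence $0\to H^3(C^*(K,\Tt)^G)\to H^3({\rm{Tot}}(A^{*,*}))\stackrel{\phi}{\To}MS_G(K)\to 0$.

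The step that requires the most care, and which I regard as the main obstacle, is verifying that the abstract edge maps of the spectral sequence really coincide with the concrete maps $\iota$ used in Theorem~\ref{theorem relation between H(A) and MS}, along with the vanishing statements $F^{n+1}H^n({\rm{Tot}}(A^{*,*}))=0$; both rest on $A^{*,*}$ being supported in rows $q\geq 1$, which kills the relevant low-degree strips. After that, the rest is routine bidegree bookkeeping for the $d_r$.
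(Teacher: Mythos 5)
Your proposal is correct and follows essentially the same route as the paper: it reduces to Theorem \ref{theorem relation between H(A) and MS} and then identifies the maps $\iota$ and $H^4(C^*(K,\Tt)^G)\to H^4({\rm{Tot}}(A^{*,*}))$ with the edge maps $E_2^{0,3}\to H^3$ and $E_2^{0,4}\to H^4$ of the row filtration, using that $A^{*,*}$ has no $0$-th row to control the relevant differentials and filtration steps. Your bookkeeping (why $E_\infty^{0,3}=E_4^{0,3}$, and that ``$d_3(E_3^{2,1})+d_2(E_2^{1,2})$'' must be read via preimages in $E_2^{0,4}$) is in fact slightly more careful than the paper's own wording.
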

\begin{proof}
The first page of the spectral sequence associated to the filtration
${\rm{Tot}}(A^{*,q>*})$ is given by $E_1^{0,q}= C^q(K,\Tt)^G$ and $E_1^{p,q}= H^p_G(K^q,\Tt)$ whenever $q>0$, 
and therefore one has that the second page is given by
\begin{align*}
E_2^{0,q}= & H^q(C^*(K,\Tt)^G)
\end{align*}
and for $p>0$ and $q>0$ 
\begin{align*}
E_2^{p,q}= & \frac{{\rm{ker}}\left(H^p_G(K^q,\Tt) \stackrel{\delta_K}{\to}H^p_G(K^{q+1},\Tt) \right) }
{{\rm{im}}\left(H^p_G(K^{q-1},\Tt) \stackrel{\delta_K}{\to}H^p_G(K^{q},\Tt)\right)}.
\end{align*}

The map $$H^4(C^*(K,\Tt)^G) \to
H^4({\rm{Tot}}(A^{*,*}))$$
coincides with the standard map $E_2^{0,4} \to H^4({\rm{Tot}}(A^{*,*}))$ and its kernel
consists of the images of $E_2^{1,2}$ and $E_3^{2,1}$ under the differentials $d_2$ and $d_3$ respectively, since
we know that the map
$$E_4^{0,4} \hookrightarrow H^4({\rm{Tot}}(A^{*,*})).$$
is injective. Therefore we have that
$$d_2(E_2^{1,2}) \subset {\rm{ker}}  \left( H^4(C^*(K,\Tt)^G) \to H^4({\rm{Tot}}(A^{*,*})) \right)$$
and the above inclusion is an equality whenever $d_3(E_3^{2,1})=0$. In the case that  $d_3(E_3^{2,1})\neq 0$ we could
abuse of the notation and say that
$$d_3(E_3^{2,1}) + d_2(E_2^{1,2}) = {\rm{ker}}  \left( H^4(C^*(K,\Tt)^G) \to H^4({\rm{Tot}}(A^{*,*})) \right).$$

A similar argument could be used to calculate $\iota \left(H^3(C^*(K,\Tt)^G)\right)$. Since $E_2^{0,3}=H^3(C^*(K,\Tt)^G)$
we have that its image $\iota(E_2^{0,3}) \subset H^3({\rm{Tot}}(A^{*,*}))$  is equal to the image of the canonical map
$$E_2^{0,3} \to H^3({\rm{Tot}}(A^{*,*})).$$
Since the image is isomorphic to the group to which it converges, in this case $E_4^{0,4}$, then we can conclude
that $\iota \left(H^3(C^*(K,\Tt)^G)\right) \cong E_4^{0,3}$ and therefore
\begin{align*}
{\rm{ker}}\left(H^3({\rm{Tot}}(A^{*,*})) \stackrel{\phi}{\To}  MS_G(K)\right) \cong E_4^{0,3}.
\end{align*}

Finally, whenever the spectral sequence collapses at the second page we have that $d_2=0=d_3$ and therefore $\phi$ is surjective.
Since we have in this case we have that $E_4^{0,3}=E_2^{0,3} = H^3(C^*(K,\Tt)^G)$, the proposition follows.
 \end{proof}

 From Theorem \ref{theorem relation between H(A) and MS} we can deduce two things.
 \begin{itemize}[leftmargin=*]
 \item If $\KK=(K,m,1,\alpha,\beta,\theta)$ is a pseudomonoid with strict unit in the 2-category of $G$-sets with twists such that
 $[\alpha \oplus \beta \oplus \theta]$ lies in the image of $\iota$, then the Grothendieck ring ${\rm{Groth}}(Bun_G(\KK))$
 is isomorphic to the untwisted ring  $\KU_G(K)$.
 \item Multiplicative structures $(\alpha',\beta')$ in $MS_G(K) $ such that $\alpha' \oplus \beta' \oplus \theta'$
 belongs to $Z^3({\rm{Tot}}(A^{*,*}/A^{*,*>3})) $  and $\delta_K\theta' \neq 0$, 
 define ring
 structures $\KU_G(K; \alpha',\beta')$ which cannot be obtained as the Grothendieck ring ${\rm{Groth}}(Bun_G(\KK))$
 for any pseudomonoid $\KK$ with strict unit in the 2-category of $G$-sets with twists.
 \end{itemize}

\subsubsection{Relation between the automorphisms} \label{subsubsection:relation_between_automorphisms}
We have seen that the isomorphism classes of automorphisms of
$\KK$ that leave the monoid $K$ fixed is isomorphic to the group
$H^2( {\rm{Tot}}(A^{*,*}(K\rtimes G,\Tt)))$.  Since the automorphism group of
$\KU_G(K; \alpha, \beta)$ is $H^1_G(K;\Tt)_{\rm{mult}}$ we have
that there is an induced map
$$H^2( {\rm{Tot}}(A^{*,*})) \to H^1_G(K;\Tt)_{\rm{mult}}$$
which matches the homomorphism that appears in the exact
sequence \eqref{exact sequence for H2(A)}
\begin{align*} 
0\to H^2(C^*(K;\Tt)^G) \to H^2({\rm{Tot}}(A^{*,*})) \to
H^1_G(K;\Tt)_{\rm{mult}} \stackrel{d_2}{\to}
H^3(C^*(K;\Tt)^G).\end{align*}
Note that in the case that the spectral sequence collapses at the second page we get the short exact sequence
\begin{align*} 
0\to H^2(C^*(K;\Tt)^G) \to H^2({\rm{Tot}}(A^{*,*})) \to
H^1_G(K;\Tt)_{\rm{mult}} \to
0.\end{align*}

\vspace{0.5cm} A more elaborate analysis of the homomorphisms
$$H^3( {\rm{Tot}}(A^{*,*})) \stackrel{\phi}{\to} MS_G(K) \ \ \ \mbox{and} \ \ \
H^2( {\rm{Tot}}(A^{*,*})) \to H^1_G(K;\Tt)_{\rm{mult}}$$ will
depend on the choice of the group $G$ and of the $G$-equivariant
monoid $K$. In the next chapter we will calculate explicitly the
previous homomorphisms for several examples, and from them we will
deduce interesting information with regard to the twisted
equivariant K-theory rings.

\section{Examples}\label{Section: Examples}
The main objective of this section is to use Proposition \ref{proposition calculation of kernel and cokernel mu}  to calculate the
kernel and cokernel of the homomorphism
$$H^3( {\rm{Tot}}(A^{*,*})) \stackrel{\phi}{\to} MS_G(K)$$ for
different choices of $G$ and $K$,
in order to show the different
twisted $G$-equivariant K-theory rings over $K$ that can appear.

\subsection{Trivial action of $G$ on $K$} In this case we have that the spectral sequence defined in Proposition
\ref{proposition calculation of kernel and cokernel mu} collapses at the second page and moreover we have that 
$C^*(K,\Tt)=C^*(K,\Tt)^G$. Therefore we obtain the short exact sequence 
$$ 0 \to H^3(K,\Tt) \to H^3( {\rm{Tot}}(A^{*,*})) \stackrel{\phi}{\to} MS_G(K) \to 0,$$
thus implying that $$MS_G(K)\cong H^3( {\rm{Tot}}^*(B^{*,*}(K \times G,\Tt))),$$and moreover that all multiplicative structures for the $G$-equivariant K-theory of $K$ can be obtained from the ring structures
defined by the Grothendieck rings of the monoidal categories $Bun_G(\KK)$. We also obtain the short
exact sequence
$$ 0 \to H^2(K,\Tt) \to H^2( {\rm{Tot}}(A^{*,*})) \stackrel{}{\to} {\rm{Hom}}(K, {\rm{Hom}}(G, \Tt))\to 0$$
where in this case $H^1_G(K,\Tt)_{mult}={\rm{Hom}}(K, {\rm{Hom}}(G, \Tt))$.

 Furthermore, if $[\theta] \in H^3(K,\Tt)$
is non trivial then we can define a non-trivial  pseudomoinoid with strict unit in the 2-category
of $G$-sets with twist $\mathcal{K}=(K,m,1,0,0,\theta)$ with $[0 \oplus 0 \oplus \theta]$ non-zero in $H^3( {\rm{Tot}}(A^{*,*}))$, such that
$${\rm{Groth}}(Bun_G(\KK)) \cong R(G) \otimes_\integer \integer[K]$$
where $R(G)$ is the Grothendieck ring of finite dimensional complex representations of $G$ and $\integer[K]$ is the group
ring of $K$, since we know that $R(G) \otimes_\integer \integer[K]$ is isomorphic to the non-twisted ring structure on $\KU_G(K)$.

\subsubsection{$G=\integer/n$ and $K=\integer/m$} In this case we have that $H^3(K,\Tt)=\integer/m$ and 
$$H^3( {\rm{Tot}}(A^{*,*}))=\integer/m \oplus \integer/(n,m)$$ 
where $(n,m)$ is the greatest common divisor of the pair $n,m$. 
Therefore $MS_G(K)= \integer/(n,m)$ and we have that all non trivial multiplicative structures come from the group
$$\integer/(n,m)=H^3( {\rm{Tot}}(B^{*,*}))\subset H^3( {\rm{Tot}}(A^{*,*})).$$

\subsection{Adjoint action of $G$ on itself}\label{subsection: G actiing on itself}
From Lemma \ref{lem:iso_A_with_B_and_cohomology_of_K} we know that in this
case we have the split short exact sequence
$$
\xymatrix{ 0 \ar[r] & H^*( {\rm{Tot}}(B^{*,*})) \ar[r] & H^*( {\rm{Tot}}(A^{*,*})) \ar[r] &H^*(G,\Tt)) \ar@/_/ @{..>}[l]_(0.4){\mu^*} \ar[r] & 0}$$
induced by the short exact sequence of complexes $$0 \to  {\rm{Tot}}(B^{*,*}) \subset  {\rm{Tot}}(A^{*,*}) \to C^{*>0}(G,\Tt) \to 0.$$

Since we have the inclusion $H^*( {\rm{Tot}}(B^{*,*})) \subset H^*( {\rm{Tot}}(A^{*,*}))$, we can deduce that the groups $E_2^{*,0}$
of the 0-th column of the second page of the spectral sequence converging to $H^*( {\rm{Tot}}(A^{*,*}))$ defined in Proposition \ref{proposition calculation of kernel and cokernel mu},
 are unaffected by the differentials $d_i$ for $i>1$; this follows from the injectivity between  the spectral sequences associated to the filtrations
 $B^{*,*>q}$ and $A^{*,*>q}$ of ${\rm{Tot}}(B^{*,*})$ and ${\rm{Tot}}(A^{*,*})$ respectively.

 Therefore we have that $E_4^{0,3}=E_2^{0,3}=H^3(C^*(G,\Tt)^G)$ and
$d_2(E_2^{1,2})=0=d_3(E_3^{2,1})$, and by Proposition \ref{proposition calculation of kernel and cokernel mu} we get the short exact
sequence
\begin{align} \label{short_exact_sequence_of_MS}
 0 \to H^3(C^*(G,\Tt)^G) \to H^3( {\rm{Tot}}(A^{*,*})) \stackrel{\phi}{\to} MS_G(G) \to 0.
 \end{align}

Moreover, the cokernel of the inclusion 
$$E_\infty^{0,q} \to H^q( {\rm{Tot}}(A^{*,*})) $$
should match the cohomology group $H^q( {\rm{Tot}}(B^{*,*})) $ since this piece is built from the groups $E_\infty^{r,s}$ with $r,s>1$ and $r+s=p$, therefore we have the canonical isomorphism
$$ H^q(C^*(G,\Tt)^G) \oplus H^q( {\rm{Tot}}(B^{*,*})) \stackrel{\cong}{\to} H^q( {\rm{Tot}}(A^{*,*})), x\oplus y \mapsto x+y$$
which in particular implies that
$$H^q(C^*(G,\Tt)^G) \stackrel{\cong}{\To} H^q(G,\Tt).$$

 Then we can conclude that the
composition of the maps
$$H^3( {\rm{Tot}}(B^{*,*})) \subset H^3( {\rm{Tot}}(A^{*,*})) \stackrel{\phi}{\to} MS_G(G) $$
is an isomorphism, and therefore we obtain the canonical isomorphism
\begin{equation}\label{equation final remark}
H^3( {\rm{Tot}}(B^{*,*})) \stackrel{\cong}{\to}MS_G(G), \hspace{0.5cm} [\alpha \oplus \beta] \mapsto [\alpha \oplus \beta].
\end{equation}

Also we obtain the short exact sequence
\begin{align*} 
0\to H^2(C^*(G;\Tt)^G) \to H^2({\rm{Tot}}(A^{*,*})) \to
H^1_G(G;\Tt)_{\rm{mult}} \to
0,\end{align*}
with $H^1_G(G;\Tt)_{\rm{mult}} \cong H^2( {\rm{Tot}}(B^{*,*})) \cong {\rm{Hom}}(G, {\rm{Hom}}(G, \Tt))$.
  
 Now we will study the multiplicative structures that define the pseudomonoids constructed via the formalism
 introduced in \cite{Dijkgraaf}, and whose properties were outlined in section \ref{subsection:definition_of_alpha_beta_theta_from_w}.
  For this purpose we need to calculate the composition of the maps
  $$H^3( G,\Tt) \stackrel{\tau_1^*}{\to} H^3( {\rm{Tot}}(A^{*,*})) \stackrel{\phi}{\to} MS_G(G)$$
  where $\tau_1^*$ is the induced map in cohomology which was defined at the chain level in \eqref{shuffle_hom_for_A}.
  This calculation will be carried out using the ring structure of the ring $H^*(G,\integer)$ together with the pullback map
  $\mu^* : H^*(G,\integer) \to H^*(G \rtimes G,\integer)$
  induced by the multiplication $\mu: G \rtimes G \to G$ as we have in Theorem \ref{Theorem: mu bullback}. Since we have the isomorphism
  $$H^3( {\rm{Tot}}(A^{*,*})) \oplus H^3(G,\Tt) \stackrel{\cong}{\to} H^3(G \rtimes G,\Tt), \hspace{0.5cm} x \oplus y \mapsto x + \pi_2^*y$$
  where $\pi_2 : G \rtimes G \to G, (a,g) \mapsto g$ is the projection, then the short exact sequence of \eqref{short_exact_sequence_of_MS} implies that the following is also a
   short exact sequence
  $$0 \to H^3(C^*(K;\Tt)^G) \oplus H^3(G,\Tt) \to H^3(G \rtimes G,\Tt) \to  MS_G(G) \to 0.$$
  
  We conclude that 
  the homomorphism $$\phi \circ \tau_1^* : H^3(G,\Tt) \to MS_G(G)$$
  is equivalent to the composition of homomorphisms
  $$ H^3(G,\Tt) \stackrel{\mu^*}{\to} H^3(G \rtimes G,\Tt)/(H^3(C^*(K;\Tt)^G) \oplus H^3(G,\Tt)) \stackrel{\cong}{\to} MS_G(G).$$

Therefore we simply need to find the restriction to $H^3({\rm{Tot}}(B^{*,*}))$ of the image of $\mu^*$ in $ H^3(G \rtimes G,\Tt)$,
  $$ H^3(G,\Tt) \stackrel{\mu^*}{\to} H^3(G \rtimes G,\Tt) \stackrel{{\rm{res}}}{\to} H^3({\rm{Tot}}(B^{*,*})) \cong MS_G(G).$$
  Let us see some examples.

  \subsubsection{Cyclic groups}
Consider $G=K=\integer/n$ and note that as rings $H^*(G;\integer)=\integer[x]/(nx)$ where $|x|=2$. By the Kunneth theorem
we have that $$H^2(G \times G;\integer) \cong H^2(G;\integer) \otimes  H^0(G;\integer) \oplus H^0(G;\integer) \otimes  H^2(G;\integer) $$ and moreover we have that
$$ \mu^*x = x \otimes 1 + 1\otimes x.$$
Since $x^2$ is the generator of $H^4(G;\integer)$ we obtain 
$$ \mu^*x^2 = x^2 \otimes 1 + 2 x \otimes x + 1\otimes x^2$$
where
$$H^4({\rm{Tot}}(B^{*,*}(G \times G, \integer))) = \langle x \otimes x \rangle \cong \integer/n$$
and therefore we obtain that the map $$\phi \circ \tau_1^*: H^3(G,\Tt) \to MS_G(G)$$ is equivalent to the map
$$H^4(G; \integer) \to H^4({\rm Tot}(B^{*,*}(G \times G, \integer))),  \  x^2 \mapsto  2 x \otimes x.$$
Since in this case $H^3(G; \Tt)\cong MS_G(G) \cong \integer/n$ we have that $\phi \circ \tau_1^*: \integer/n \stackrel{\times 2}{\to} \integer/n$. 

Therefore, when $n$ is odd, the map $\phi \circ \tau_1^*$ is an isomorphism and therefore the Grothendieck rings of representations ${\rm{Groth}}({\rm{Rep}}(D^w(G)))$ of the Twisted Drinfeld Doubles $D^w(G)$ for $w \in H^3(G,\Tt)$ are all non-isomorphic.

Meanwhile when $n$ is even, the cocycle $w \in Z^3(G,\Tt)$ whose cohomology class is $\frac{n}{2} \in \integer/n \cong H^3(G,\Tt)$
has for Grothendieck ring of representations ${\rm{Groth}}({\rm{Rep}}(D^w(G)))$ a ring isomorphic to ${\rm{Groth}}({\rm{Rep}}(D(G)))$, which is by definition the ring $\KU_G(G)=R(G) \otimes \integer G$. Moreover, the multiplicative structures defined by odd numbers in $\integer/n \cong MS_G(G)$ 
define Grothendieck rings of representations associated to the respective pseudomonoids which cannot be recovered via the Grothendieck ring of representations associated to the Twisted Drinfeld Double construction.  

Note furthermore that in this case the automorphism groups are isomorphic
$$H^2({\rm Tot}(A^{*,*}(G \times G, \Tt))) \cong H^1_G(G,\Tt)_{mult} \cong \integer/n.$$

\subsubsection{Quaternionic group} Consider $G=K=Q_8$  the quaternionic group and recall that $Q_8 \subset SU(2)$ and that
it sits in the short exact sequence
$$0 \to \integer/2 \to Q_8 \to \integer/2 \oplus \integer/2 \to 0.$$
From the Serre fibration
$$SU(2)/Q_8 \to ESU(2)/Q_8 \to BSU(2)$$
one can deduce that the integral cohomology ring of $Q_8$ is
$$H^*(Q_8,\integer) = \integer[x,y,e]/ (x^2,y^2,xy,2x,2y,xe,ye,8e) \text{ with } |x|=|y|=2 \text{ and } |e|=4.$$

Since the cohomology of $Q_8$ is all of even degree, by the Kunneth isomorphism we have that
$$ H^4(Q_8 \times Q_8,\integer) \cong \bigoplus_{j=0}^4  H^j(Q_8,\integer) \otimes H^{4-j}(Q_8,\integer) $$
and since $Q_8 \times Q_8 \cong Q_8 \rtimes Q_8$ we have that
$$ H^*(Q_8 \rtimes Q_8,\integer) \cong H^*(Q_8 \times Q_8,\integer) .$$
In this case we have that
$$MS_{Q_8}(Q_8)\cong H^4({\rm Tot}(B^{*,*}(Q_8 \rtimes Q_8, \integer))) =\langle x\otimes x, x\otimes y, y\otimes x,y\otimes y\rangle \cong (\integer/2)^{\oplus 4}$$
and since $$\mu^*e = e \otimes 1 + 1 \otimes e$$
 we can deduce that the map
$$\phi \circ \tau_1^* : H^3(Q_8,\Tt) \stackrel{\times 0}{\to} MS_{Q_8}(Q_8)$$
is the trivial map.

A nice consequence of the triviality of the map $\phi \circ \tau_1^*$ is that  for all $w \in Z^3(Q_8,\Tt)$, the Grothendieck ring of 
representations of the $w$-Twisted Drinfeld Double is isomorphic as rings to $\KU_{Q_8}(Q_8)$, which is the Grothendieck ring of representations of the untwisted Drinfeld double $D(Q_8)$.

Note also that the automorphism groups are isomorphic
$$ H^2({\rm Tot}(A^{*,*}(Q_8 \rtimes Q_8, \Tt)))\cong H^1_{Q_8}(Q_8,\Tt)_{mult}\cong (\integer/2)^{\oplus 4}.$$ 

\subsubsection{$G$ simple and non-abelian.} In section \ref{G_simple_non_abelian} we showed that 
$H^3({\rm Tot}(B^{*,*}(G \rtimes G, \Tt)))=0$ and therefore we have that $MS_G(G)=0$. Hence all Grothendieck rings
of representations for all pseudomonoids are isomorphic to the ring $\KU_G(G)$.
\subsubsection{$G$ binary icosahedral.} The same result applies to the binary icosehedral group since we showed in
section \ref{G_binary_icosahedral} that $H^3({\rm Tot}(B^{*,*}(G \rtimes G, \Tt)))=0$ and hence $MS_G(G)=0$.

\subsection{$\integer/n$ acted by $\integer/2$} Consider the action of $G=\integer/2$ on $K=\integer/n$ given by multiplication of $-1$.
The group $\integer/n \rtimes \integer/2$ is isomorphic to the dihedral group $D_n$ of rigid symmetries of the regular polygon of $n$ sides.

\subsubsection{$n$ odd} Let us suppose that $n $ is odd and recall that in this case $H^1(D_n, \Tt)=\integer/2$, $H^2(D_n, \Tt)=0$ and $H^3(D_n, \Tt)=\integer/2 \oplus \integer/n$. Since $H^3(\integer/2,\Tt)=\integer/2$ we can conclude that $H^3({\rm{Tot}}(A^{*,*}))=\integer/n$.
Now, applying the spectral sequence defined in Proposition \ref{proposition calculation of kernel and cokernel mu}
we have that $E_1^{1,q}=H^1_{G}(K^q,\Tt) =\integer/2$, since the only fixed point of the $\integer/2$ action is the $p$-tuple of zeros,
and therefore we obtain that $E_2^{1,q}= 0$ for $q>0$ since a simple calculation shows that
 the maps $d_1: E_1^{1,2i-1} \to E_1^{1,2i}$ are all isomorphisms. Moreover, since $E_1^{2,q}= H^2_G(K^q,\Tt)=0$ because
$H^2(\integer/2,\Tt)=0$ we have that $E_3^{2,1}=0$ and we can conclude that 
$${\rm{coker}}\left(H^3({\rm{Tot}}(A^{*,*})) \stackrel{\phi}{\To}  MS_G(K)\right) =0;$$
 hence we have that the map $H^3({\rm{Tot}}(A^{*,*})) \stackrel{\phi}{\To}  MS_G(K)$ is surjective.

It remains now to calculate ${\rm{ker}}\left(H^3({\rm{Tot}}(A^{*,*})) \stackrel{\phi}{\To}  MS_G(K)\right)$. Applying the same argument
as before we have that $E_2^{0,3}= H^3(C^*(K,\Tt)^G)$  and we already know that $E_2^{1,2}=0=E_2^{2,1}$; therefore
$\iota(H^3(C^*(K,\Tt)^G))$ coincides with the image of the canonical map $E_2^{0,3} \to H^3({\rm{Tot}}(A^{*,*}))$
and this map must be surjective. Therefore we have that 
$${\rm{ker}}\left(H^3({\rm{Tot}}(A^{*,*})) \stackrel{\phi}{\To}  MS_G(K)\right)= \integer/n$$
and we can conclude that $$MS_G(K)=0;$$
i.e. all multiplicative structures on the $\integer/2$-equivariant K-theory of $\integer/n$ are trivial and
and all the Grothendieck rings $\rm{Groth}(Bun_{\integer/2}(\KK))$ are isomorphic to the ring $\KU_{\integer/2}(\integer/n)$
for any $\KK=(\integer/n,m,1,\alpha,\beta,\theta)$. By Remark \ref{Remark: Bun_G(K) 3-cocyclo trivial}, $\KU_{\integer/2}(\integer/n)$ is just the ring of isomorphism classes of representations  of the dihedral group $D_n$. 

In this case the automorphism groups are both trivial
$$ H^2({\rm Tot}(A^{*,*}(\integer/n \rtimes \integer/2, \Tt)))=0=H^1_{\integer/2}(\integer/n,\Tt)_{mult}.$$ 

\subsubsection{$n$ even} Let us now suppose that $n $ is even; in this case $H^1(D_n, \Tt)=\integer/2 \oplus \integer/2$, $H^2(D_n, \Tt)=\integer/2$ and $H^3(D_n, \Tt)=\integer/2  \oplus \integer/2 \oplus \integer/n$. Since $H^3(\integer/2,\Tt)=\integer/2$ we have that $H^3({\rm{Tot}}(A^{*,*}))=\integer/2 \oplus \integer/n$.
Now, we also have that $$E_1^{1,q}=H^1_{G}(K^q,\Tt) \cong {\rm{Maps}}((\integer/2)^q, \integer/2)$$ since the fixed points of the $\integer/2$ action on $K^q$ consists of $q$-tuples of points  with either $0$ or $\frac{n}{2}$ for entries. It is a simple calculation
to show that the differential $d_1: E_1^{1,i} \to E_1^{1,i+1}$ is precisely the differential of the cohomology of the group $\integer/2$
with coefficients in $\integer/2$ and therefore we get that
$$E_2^{1,q}\cong H^q(\integer/2, \integer/2) \cong \integer/2.$$

The groups $E_1^{2,q}$ are trivial because $H^2(\integer/2, \Tt)=0$.

Let us now calculate the group $E_2^{0,1}= H^1(C^*(K,\Tt)^G)$. This group consists of the 
maps $f: \integer/n \to \Tt$ such that $f$ is invariant under the $G$-action, namely $f(x)=f(-x)$, and
that $\delta_1f=0$, which means that $f$ is a homomorphism. The only $G$-invariant homomorphisms are the ones
that take values in the subgroup $\integer/2 \subset \Tt$ and therefore we have that
$$E_2^{0,1}= H^1(C^*(K,\Tt)^G) = \integer/2.$$

The information we have obtained so far on the cohomology groups of the second page of the spectral sequence is the following

\begin{tikzpicture}
\matrix [matrix of math nodes,row sep=5mm]
{
 3 &  [5mm]  |(a)|  ? & [5mm]   & [5mm]  & [5mm] & [5mm] \\
2 & |(b)| ? & |(c)|  \integer/2  &  & & \\
1&  \integer/2 & |(d)|  \integer/2 & |(e)| 0&  & \\
0& \Tt &  \integer/2 & |(f)| 0 & |(g)|  \integer/2 \\
& 0 & 1 & 2 & 3&\\
};
\tikzstyle{every node}=[midway,auto,font=\scriptsize]
\draw[thick] (-2.2,-1.7) -- (-2.2,2.2) ;
\draw[thick] (-2.2,-1.7) -- (2.2,-1.7) ;
\end{tikzpicture}

\noindent and since we know that 
$H^1(D_n, \Tt)=\integer/2 \oplus \integer/2$ and $H^2(D_n, \Tt)=\integer/2$ we can deduce that
$E_\infty^{0,2}=E_2^{0,2}=0$ and therefore $E_2^{0,3}=E_4{0,3}= \integer/n$. Hence we have that
$$H^3(C^*(G,\Tt)^G)= \integer/n, \hspace{0.5cm} MS_G(K)= \integer/2 \text{ and } $$ 
$$H^3({\rm{Tot}}(A^{*,*})) \stackrel{\phi}{\To}  MS_G(K), \hspace{0.5cm} \integer/n \oplus \integer/2 \to \integer/2$$ is the canonical projection map. 

In this case the automorphism groups are isomorphic 
$$ H^2({\rm Tot}(A^{*,*}(\integer/n \rtimes \integer/2, \Tt))) \cong H^1_{\integer/2}(\integer/n,\Tt)_{mult} \cong \integer/2.$$ 

\section{Appendix: Relation with (coquasi) bialgebras}\label{Appendix: Relation with (coquasi) bialgebras}



In what follows we will show how $Bun_G(\mathcal K)$ can be understood as the tensor category of corepresentations associated
to an explicit coquasi-bialgebra, and for this purpose we will show that the input necessary for defining such coquasi-bialgebra is equivalent to the information encoded in a pseudomonoid with
strict unit in the 2-category of $G$-sets with twists.

\subsection{Coquasi-bialgebras}
\subsubsection{Coalgebras and comodules}
Let $k$ be a field. A coalgebra over $k$ is a vector space over $k$
together with two linear maps  $\Delta:C\to C\ot C,$ $\va:C\to k$
(called comultiplication and counit respectively) such that
$(C\otimes\Delta) \Delta= (\Delta\ot C)\Delta$ and $(C\ot
\va)\Delta=(\va\otimes C)\Delta= C$. We shall use the Sweedler's
notation omitting the sum symbol, that is $\Delta(c)= c_1\otimes
c_2$ if $c\in C$.

If $C$ is a coalgebra, a right $C$-comodule is a vector $k$-space
$M$ with a linear map $\rho: M\to M\ot C$ such that $(\rho\ot C)\rho
= (M\ot \Delta)\rho$ and $( M\ot \va)=M$. Again for the comodule structure we
shall use Sweedler's notation omitting the sum symbol, i.e.,
$\rho(m)= m_{0}\otimes m_1,$ $m_{0}\in M, m_1\in C$. If $M, N$ are
$C$-comodules, a comodule map is a linear map $f:M\to N$ such that
$\rho_N f= (f\ot C)\rho_M$. We shall denote by $\mathcal{M}^C$ the
category of right $C$-comodules.

If $C, C'$ are coalgebras $C\otimes C'$ is a coalgebra with
comultiplication $\Delta(c\ot c')= (c_1\ot c'_1)\ot (c_2\ot c'_2)$
and counit $\va(c\ot c')=\va(c)\va(c')$.

For a coalgebra $C$ the space $C^*$ is an associative algebra with
the convolution product $f*g(c)= f(c_1)f(c_2)$ and unit $\va$.

\subsubsection{Coquasi-bialgebras}
 A coquasi-bialgebra is a
five-tuple $(H, \Delta, m, 1_H , \phi )$ where $H$ is a
coassociative coalgebra with counit, $m: H\ot H\to H,h\ot g\mapsto
hg$ is a coalgebra map, $1_H$ is a grouplike element (i.e. $\Delta(1_H)=1_H\otimes 1_H$)  which is a unit
for $m$, and
 $\phi\in (H\ot H\ot H)^{*}$ is a convolution invertible map (called the
\emph{coassociator}), satisfying the identities
\begin{eqnarray}
&&\phi(g\ot 1_H\ot h)=\va(g)\va(h),\label{q1}\\
&&  m(m\ot \id_H)*\phi= \phi* m(\id_H\ot m),\label{q2}\\
&&  \phi(d_1f_1\ot g_1\ot h_1)\phi(d_2\ot f_2\ot g_2h_2)=
\phi(d_1\ot f_1\ot g_1)\phi( d_2\ot f_2g_2\ot h_1)\phi(f_1\ot g_3\ot
h_2)\label{q3}
\end{eqnarray}
for all $f,g,h\in H$.


The  category of right $H$-comodules $\mathcal{M}^H$ is monoidal,
where the tensor product is over the base field and the comodule
structure of the tensor product is induced by the multiplication.
The associator is given by
\begin{eqnarray*} \Phi _{U,V,W} &:&(U\otimes V)\otimes
W\longrightarrow U\otimes (V\otimes W) \\ \Phi_{U,V,W}((u\otimes
v)\otimes w) &=&\phi^{-1} (u_{1},v_{1},w_{1})u_{0}\otimes
(v_{0}\otimes w_{0})
\end{eqnarray*} for $u\in U$, $v\in V$, $w\in W$ and $U,V,W\in
\mathcal{M}^H$.

\medbreak



\subsection{Coquasi-bialgebras associated to pseudomonoids}

Let $G$ be a finite  group and
$\mathcal{K}=(K,m,1,\alpha,\beta,\theta)$ a pseudomonoid with strict
unit in the 2-category of $G$-sets with twists; let us denote $\C^G:= {\rm{Maps}}(G,\C)$, let $\delta_\sigma \in \C^G$  be the function that assigns $1$ to $\sigma$ and $0$ otherwise, and let $\delta_{\sigma,\tau}$ be the Dirac's delta associated to the pair $\sigma,\tau \in G$, namely $\delta_{\sigma,\tau}$ is $1$ whenever $\sigma= \tau$ and $0$ otherwise.

\begin{theorem}\label{appen theorem 1}

The vector space $\C^G\# K$ with basis
$\{\delta_\sigma\# x| \sigma\in G, x\in K\}$ is a coquasi-bialgebra
with product
$$(\delta_\sigma\#x)(\delta_\tau \# y)=
\delta_{\sigma,\tau}\beta[\sigma||x|y]\delta_\sigma\#xy,$$ coproduct,
$$\Delta(\delta_\sigma\# x)=
\sum_{a,b\in G: ab=\sigma}\alpha[a|b||x]\delta_a\# bx\ot
\delta_b\#x,$$ associator $$\phi(\delta_\sigma\# x,\delta_\tau\#
y,\delta_\rho\#
z)=\delta_{\sigma,e}\delta_{\tau,e}\delta_{\rho,e}\theta[x|y|z],$$
counit $\varepsilon(\delta_\sigma\# x)=\delta_{\sigma,e}$, and unit
$1\#e$ for all $\sigma,\tau,\rho\in G$, $x,y,z\in K$.

Moreover, the tensor category of right $\C^G\# \C K$-comodules is tensor isomorphic to the monoidal category $Bun_G(\mathcal K)$ of equivariant vector
bundles on $\mathcal K$.
\end{theorem}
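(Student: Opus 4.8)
The plan is to prove Theorem \ref{appen theorem 1} in two parts: first verifying that $\C^G\#K$ is a coquasi-bialgebra, and second establishing the tensor equivalence with $Bun_G(\mathcal{K})$. For the first part, the strategy is to translate each of the defining axioms of a coquasi-bialgebra (coassociativity of $\Delta$, counit, the fact that $m$ is a coalgebra map, associativity of $m$ up to $\phi$, the pentagon-type identities \eqref{q1}, \eqref{q2}, \eqref{q3}, and the unit/grouplike conditions) into the cocycle equations satisfied by $\alpha$, $\beta$, $\theta$. The key observation is that these are exactly the equations appearing in Proposition \ref{proposition pseudomonoid with strict unit 3-cocycle}: coassociativity of $\Delta$ corresponds to $\delta_G\alpha=1$; the counit condition and the normalization of $\alpha$ in the $K$-variable come from the strict unit; the statement that $m$ is an algebra map and compatible with $\Delta$ corresponds to $\delta_G\beta\cdot\delta_K\alpha=1$; identity \eqref{q3} for the associator $\phi$ corresponds to $\delta_G\theta\cdot\delta_K\beta^{-1}=1$; identity \eqref{q2} corresponds to the cocycle condition $\delta_K\theta=1$; and the normalization $\theta[k_1|1_K|k_2]=1$ gives \eqref{q1}. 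So the first part is essentially a dictionary: I would set up the correspondence explicitly and then check each axiom by a direct (but routine) Sweedler-notation computation, citing the already-established cocycle identities.

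For the tensor equivalence, I would construct an explicit functor $\Psi: Bun_G(\mathcal{K}) \to \mathcal{M}^{\C^G\#K}$ and its inverse. Given an object $\mathcal{H}=\bigoplus_{k\in K}\mathcal{H}_k$ with twisted $G$-action $\rhd$, the comodule structure map $\rho:\mathcal{H}\to \mathcal{H}\otimes(\C^G\#K)$ should be $\rho(h_k) = \sum_{\sigma\in G} (\sigma^{-1}\rhd h_k)\otimes(\delta_\sigma\# \sigma^{-1}k)$ or a variant thereof — the precise formula must be chosen so that the coassociativity axiom $(\rho\otimes\mathrm{id})\rho=(\mathrm{id}\otimes\Delta)\rho$ reproduces the twisted-action compatibility $\sigma\rhd(\tau\rhd h_k)=\alpha[\sigma|\tau||k](\sigma\tau)\rhd h_k$, and the counit axiom reproduces $e\rhd h=h$. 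I would verify that a linear map is a morphism in $Bun_G(\mathcal{K})$ (grading-preserving and $G$-equivariant) if and only if it is a comodule morphism, so the functor is fully faithful, and that every comodule arises this way (by decomposing along the grouplike-type idempotents $\delta_\sigma\# k$ in the dual algebra), so it is essentially surjective.

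Finally I would check the monoidal compatibility: the tensor product of comodules in $\mathcal{M}^{\C^G\#K}$ is induced by the multiplication of $\C^G\#K$, and unwinding the product formula $(\delta_\sigma\#x)(\delta_\tau\#y)=\delta_{\sigma,\tau}\beta[\sigma||x|y]\delta_\sigma\#xy$ shows the diagonal comodule structure on $\mathcal{H}\otimes\mathcal{H}'$ carries exactly the twisted action $\sigma\rhd(h_x\otimes h_y')=\beta[\sigma||x|y](\sigma\rhd h_x\otimes\sigma\rhd h_y')$ defining the monoidal structure on $Bun_G(\mathcal{K})$; similarly the associator $\Phi_{U,V,W}$ built from $\phi$ reproduces $\Theta((h_x\otimes h_y')\otimes h_z'')=\theta[x|y|z]^{-1}h_x\otimes(h_y'\otimes h_z'')$, and the units match since $\phi$ and the comodule counit are concentrated at $e$. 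I expect the main obstacle to be purely bookkeeping: getting all the inverses, the placement of $\sigma$ versus $\sigma^{-1}$, and the left-versus-right conventions consistent so that the signs and arguments of $\alpha$, $\beta$, $\theta$ line up precisely — the conceptual content is already contained in Proposition \ref{proposition pseudomonoid with strict unit 3-cocycle}, but making the dictionary between the ``$K\rtimes G$ double complex'' side and the ``smash product coalgebra'' side exact requires care, and one may need to pass through $G^{\mathrm{op}}$ as already flagged in Section \ref{subsection:definition_of_alpha_beta_theta_from_w} to match standard conventions.
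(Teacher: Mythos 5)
Your proposal follows essentially the same route as the paper: the coquasi-bialgebra axioms are verified as a dictionary with the cocycle identities of Proposition \ref{proposition pseudomonoid with strict unit 3-cocycle} (the paper simply declares this check straightforward), and the equivalence is given by an explicit coaction functor on objects of $Bun_G(\mathcal K)$ together with its inverse obtained by decomposing a comodule along the $K$-grading and reading off the twisted $G$-action, with the monoidal data matching through $\beta$ and $\theta$ exactly as you describe. One correction to your tentative formula: the coaction that works is the untwisted one, $\rho(h_x)=\sum_{\sigma\in G}\sigma\rhd h_x\otimes\delta_\sigma\# x$ (keeping the grading label $x$ in the second factor, no inverses), which is what the paper uses and for which coassociativity against $\Delta(\delta_\sigma\#x)=\sum_{ab=\sigma}\alpha[a|b||x]\,\delta_a\#bx\otimes\delta_b\#x$ reproduces $\tau\rhd(\sigma\rhd h_x)=\alpha[\tau|\sigma||x](\tau\sigma)\rhd h_x$; your candidate $\sum_{\sigma}(\sigma^{-1}\rhd h_x)\otimes(\delta_\sigma\#\sigma^{-1}x)$ does not satisfy that axiom, though you correctly flagged that the precise placement of $\sigma$ had to be pinned down by this very requirement.
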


\begin{proof}
It is straightforward to check that  $\C^G\# K$ satisfies all axioms of coquasi-bialgebra.

If $V=\bigoplus_{x\in K}V_x$ is an object in $Bun_G(\mathcal K)$, we
define a right $\C^G\# \C K$-comodule structure over $V$ by
$$\rho(v_x)=\sum_{\sigma\in G}\sigma \rhd v_x\otimes \delta_\sigma\#
x,$$ for all $v_x\in V_x$, $x\in K$. It follows that this rule defines a strict monoidal functor 
$\mathcal{F}$ from $Bun_G(\mathcal K)$ to the category of right $\C^G\# \C K$-comodules.






Conversely, if $(V,\rho)$ is a right $\C^G\# \C K$-comodule then
$$V=\bigoplus_{x\in K} V_x,\  \  \ \text{where}\   V_x=\{v\in V: (id\ot \pi)\rho= v\otimes x\},$$ where
 $\pi: \C^G\# K\to \C K, \delta_{\sigma}\#
x\mapsto \delta_{\sigma, e}u_x$ and the comodule map defines unique
linear maps $\rhd: \C G\otimes V_x\to V$ $(x\in G)$ such that
$$ \rho(v_x)= \sum_{\sigma \in G}\sigma\rhd v_x\# x,$$ for all $v_x\in
V_x$. It follows that $(V=\bigoplus_{x\in K} V_x, \rhd) \in Bun_G(\mathcal K)$, and this rule defines a functor that by construction is the inverse of $\mathcal{F}$.

\end{proof}
\begin{rem}
 If $\theta$ is trivial, $\C^G\# K$ is a Hopf algebra, and if we consider
 $G$ acting by conjugation on $K=G$ together with the 3-cocycle $w \in Z^3(G,\Tt)$, the coquasi-bialgebra
$\C^G \# \C G$ defined by the pseudomonoid $(G,m,1, \alpha_w,\beta_w,\theta_w)$  as in section \ref{subsection:definition_of_alpha_beta_theta_from_w}, is the dual of the Twisted Drinfeld Double of the finite group $G$ defined in \cite[Section 3.2]{Dijkgraaf}. 

\end{rem}

We finish with a corollary of the results of this appendix and the ones of section \ref{subsection: G actiing on itself}. A quasi-isomorphism between  coquasi- bialgebras $(H,\psi)$ and $(H',\psi)$ is a pair $(F,\theta)$ consisting of a coalgebra isomorphism $F:H\to H'$ and a convolution invertible map $\theta\in (H\otimes H)^*$ such that
$\theta(1\otimes h)=\theta(h\otimes 1)=\varepsilon(h)$ and $$\theta(g_{1}\otimes h_1)F(g_2 h_2)=F(g_1)F(h_2) \theta(g_2\otimes h_2),$$ for all $g,h\in H$. A quasi-isomorphism is called an isomorphism of coquasi-bialgebras if additionally  $$\theta(f_1\otimes g_2)\theta(f_2g_2\otimes h_1)\psi(f_3\otimes g_3\otimes h_3)=\psi'(F(f_1)\otimes F(g_1)\otimes F(h_1))\theta(g_2\otimes h_2)\theta (f_2\otimes g_3h_3).$$

In general,  the tensor category of representations of $D^w(G)$ is not equivalent to the category of representations of any Hopf algebra. However, by the isomorphism outlined in \eqref{equation final remark}, if $G$ is finite and acts over itself by conjugation, the Grothendieck ring of $Bun_G(\mathcal K)$ for any 3-cocycle in $Z^3(\text{Tot}^*(A^{*,*}(G\rtimes G,\Tt)))$, is always equivalent to the Grothendieck ring of  $Bun_G(\mathcal K')$, where the 3-cocycle associated lives in $Z^3(\text{Tot}^*(B^{*,*}(G\rtimes G,\Tt)))$. By Theorem  \ref{appen theorem 1}, $Bun_G(\mathcal K')$ is the category of representation of a Hopf algebra, so in particular we can conclude
\begin{cor} 
The Twisted Drinfeld Double of a finite group is always  quasi-isomorphic to a Hopf algebra.
\end{cor}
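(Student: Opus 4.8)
The plan is to produce, for an arbitrary finite group $G$ and any $w\in Z^3(G,\Tt)$, an explicit Hopf algebra together with a quasi-isomorphism of coquasi-bialgebras from the dual of $D^w(G)$ onto it, and then to pass to linear duals. First I would recall, from the remark following Theorem \ref{appen theorem 1}, that the coquasi-bialgebra $H:=\C^G\#G$ built from the pseudomonoid $\mathcal{K}_G^w=(G,m,1,\alpha_w,\beta_w,\theta_w)$ of Section \ref{subsection:definition_of_alpha_beta_theta_from_w} is the dual of $D^w(G)$ (up to the $\mathrm{op}$ convention already noted there). The pair $(\alpha_w,\beta_w)$ is a multiplicative structure on $G$ and therefore determines a class $[\alpha_w\oplus\beta_w]\in MS_G(G)$; by the isomorphism \eqref{equation final remark}, which identifies $H^3({\rm{Tot}}(B^{*,*}(G\rtimes G,\Tt)))$ with $MS_G(G)$, this class is represented by a $3$-cocycle $\alpha'\oplus\beta'$ lying in $Z^3({\rm{Tot}}(B^{*,*}(G\rtimes G,\Tt)))$. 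Since $B^{*,*}$ has trivial $0$-th column, $\alpha'\oplus\beta'$ has no $(0,3)$-component, so $\alpha'\oplus\beta'\oplus 0$ is a $3$-cocycle in ${\rm{Tot}}(A^{*,*})$ and $\mathcal{K}':=(G,m,1,\alpha',\beta',0)$ is a pseudomonoid with strict unit; by Theorem \ref{appen theorem 1} together with the remark following it, the associator being trivial ($\theta'=0$), the coquasi-bialgebra $H':=\C^G\#G$ attached to $\mathcal{K}'$ is a genuine Hopf algebra.

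Next I would extract comparison cochains. Because $[\alpha_w\oplus\beta_w]=[\alpha'\oplus\beta']$ as classes in $MS_G(G)\subset H^3({\rm{Tot}}(A^{*,*}/A^{*,*>2}))$, the computation in the proof of Lemma \ref{lemma isomorphic multiplicative structures} provides $\chi\in A^{1,1}(G\rtimes G,\Tt)$ and $\kappa\in A^{0,2}(G\rtimes G,\Tt)$ with $\delta_G\chi=\alpha'/\alpha_w$ and $\delta_K\chi\cdot\delta_G\kappa=\beta_w/\beta'$. These are exactly the first two of the three equations of Proposition \ref{proposition morphism of pseudomonoids with strict unit}; the third one, $\delta_K\kappa=\theta_w$, will in general fail, and indeed cannot hold when $[\theta_w]=[w]\in H^3(G,\Tt)$ is nontrivial. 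It is precisely this failure that separates a morphism of pseudomonoids, which would yield a monoidal equivalence (contradicting the fact recalled just above that ${\rm{Rep}}(D^w(G))$ need not be the representation category of a Hopf algebra), from a mere quasi-isomorphism of the underlying coquasi-bialgebras.

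Then I would assemble the quasi-isomorphism $(F,\eta):H\to H'$ by setting $F(\delta_\sigma\#x):=\chi[\sigma||x]^{-1}\,\delta_\sigma\#x$ and $\eta(\delta_\sigma\#x\otimes\delta_\tau\#y):=\delta_{\sigma,e}\,\delta_{\tau,e}\,\kappa[x|y]^{-1}$, and check, against the explicit structure maps of Theorem \ref{appen theorem 1}: that $F$ is an isomorphism of coalgebras, which is exactly the equation $\delta_G\chi=\alpha'/\alpha_w$ read through the two coproducts (these differ by $\alpha_w$ versus $\alpha'$); that $\eta$ is normalized and convolution-invertible, being $\Tt$-valued with $\kappa$ normalized in the $K$-variables; and that the remaining quasi-isomorphism axiom $\eta(g_1\otimes h_1)F(g_2h_2)=F(g_1)F(h_2)\eta(g_2\otimes h_2)$ is just the identity $\delta_K\chi\cdot\delta_G\kappa=\beta_w/\beta'$ read through the product rule $(\delta_\sigma\#x)(\delta_\tau\#y)=\delta_{\sigma,\tau}\beta[\sigma||x|y]\,\delta_\sigma\#xy$ on both sides. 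Finally I would dualize: since $G$ is finite, the linear duals $H^*$ and $H'^*$ are quasi-bialgebras with $H^*\cong D^w(G)$ and $H'^*$ a Hopf algebra, and $(F,\eta)$ dualizes to a quasi-isomorphism $D^w(G)\to H'^*$, which is the assertion.

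The hard part will be the verification in the third step: confirming that the cochains $(\chi,\kappa)$ package into a pair satisfying \emph{exactly} the axioms of a quasi-isomorphism of coquasi-bialgebras and nothing stronger, that is, that $\eta$ is the correct ``partial'' tensor-structure datum even though the associator-compatibility equation \eqref{equation cocycle delta 2 kappa} is absent. Everything else reduces to quoting \eqref{equation final remark}, Lemma \ref{lemma isomorphic multiplicative structures}, Theorem \ref{appen theorem 1} and the surrounding remarks; the only genuinely new computation is this term-by-term dualization of the equations of Proposition \ref{proposition morphism of pseudomonoids with strict unit} against the definition of quasi-isomorphism recorded in this appendix.
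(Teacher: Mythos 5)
Your argument is correct and follows the route the paper intends: use the isomorphism \eqref{equation final remark} to replace $[\alpha_w\oplus\beta_w]\in MS_G(G)$ by a representative $\alpha'\oplus\beta'\in Z^3({\rm{Tot}}(B^{*,*}(G\rtimes G,\Tt)))$, observe that $\alpha'\oplus\beta'$ together with trivial associator is a $3$-cocycle in ${\rm{Tot}}(A^{*,*})$ (the $(1,3)$-component $\delta_K\beta'$ already vanishes because it is part of the cocycle condition in ${\rm{Tot}}(B^{*,*})$, which is the point rather than the absence of a $(0,3)$-component), and invoke Theorem \ref{appen theorem 1} and the remark following it to see that the resulting coquasi-bialgebra is a Hopf algebra. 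Where you differ from the paper is in how the bridge to ``quasi-isomorphic'' is made: the paper's one-paragraph justification argues via the equivalence of Grothendieck rings, leaving implicit the passage from equality of classes in $MS_G(G)$ to a quasi-isomorphism of coquasi-bialgebras, whereas you build the pair $(F,\eta)$ explicitly out of the cochains $(\chi,\kappa)$ of Lemma \ref{lemma isomorphic multiplicative structures} and check precisely the two equations of Proposition \ref{proposition morphism of pseudomonoids with strict unit} that survive when the associator-compatibility \eqref{equation cocycle delta 2 kappa} is dropped; this is the more faithful proof of the statement as formulated, and your dualization step (legitimate since $G$ is finite) is likewise left tacit in the paper. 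One cosmetic correction: with the conventions of the lemma, $\delta_G\chi=\alpha'/\alpha_w$ and $\delta_K\chi\cdot\delta_G\kappa=\beta_w/\beta'$, the verification forces $F(\delta_\sigma\#x)=\chi[\sigma||x]\,\delta_\sigma\#x$ and $\eta(\delta_\sigma\#x\otimes\delta_\tau\#y)=\delta_{\sigma,e}\,\delta_{\tau,e}\,\kappa[x|y]$ \emph{without} the inverses you wrote (equivalently, replace $(\chi,\kappa)$ by $(\chi^{-1},\kappa^{-1})$); with your signs the same computation produces $\alpha_w/\alpha'$ and $\beta'/\beta_w$ instead, so the formulas as stated need this flip, and the trivial associator should be the constant function $1$ rather than $0$ in the multiplicative notation. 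None of this affects the validity of the argument.
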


\def\cprime{$'$}


\begin{thebibliography}{10}

\bibitem{AdemMilgram}
A.~Adem and R.~J. Milgram.
\newblock {\em Cohomology of finite groups}, volume 309 of {\em Grundlehren der
  Mathematischen Wissenschaften [Fundamental Principles of Mathematical
  Sciences]}.
\newblock Springer-Verlag, Berlin, 1994.

\bibitem{AdemRuanZhang}
A.~Adem, Y.~Ruan, and B.~Zhang.
\newblock A stringy product on twisted orbifold {K}-theory.
\newblock {\em Morfismos}, 11(2), 2007.

\bibitem{BarcenasEspinozaUribeJoachim}
N.~B\'arcenas, J.~Espinoza, M.~Joachim, and B.~Uribe.
\newblock Universal twist in equivariant {K}-theory for proper and discrete
  actions.
\newblock {\em Proc. London Math. Soc.}, 108(5):1313--1350, 2014.

\bibitem{BecerraUribe}
E.~Becerra and B.~Uribe.
\newblock Stringy product on twisted orbifold {K}-theory for abelian quotients.
\newblock {\em Trans. Amer. Math. Soc.}, 361(11):5781--5803, 2009.

\bibitem{Handbook2}
F.~Borceux.
\newblock {\em Handbook of categorical algebra. 2}, volume~51 of {\em
  Encyclopedia of Mathematics and its Applications}.
\newblock Cambridge University Press, Cambridge, 1994.
\newblock Categories and structures.

\bibitem{Brown}
K.~S. Brown.
\newblock {\em Cohomology of groups}, volume~87 of {\em Graduate Texts in
  Mathematics}.
\newblock Springer-Verlag, New York, 1982.

\bibitem{Dijkgraaf}
R.~Dijkgraaf, V.~Pasquier, and P.~Roche.
\newblock Quasi {H}opf algebras, group cohomology and orbifold models.
\newblock {\em Nuclear Phys. B Proc. Suppl.}, 18B:60--72 (1991), 1990.
\newblock Recent advances in field theory (Annecy-le-Vieux, 1990).

\bibitem{Eilenberg-MacLane}
S.~Eilenberg and S.~Mac~Lane.
\newblock On the groups of {$H(\Pi,n)$}. {I}.
\newblock {\em Ann. of Math. (2)}, 58:55--106, 1953.

\bibitem{ENO}
P.~Etingof, D.~Nikshych, and V.~Ostrik.
\newblock On fusion categories.
\newblock {\em Ann. of Math. (2)}, 162(2):581--642, 2005.

\bibitem{Hilton}
P.~J. Hilton and U.~Stammbach.
\newblock {\em A course in homological algebra}, volume~4 of {\em Graduate
  Texts in Mathematics}.
\newblock Springer-Verlag, New York, second edition, 1997.

\bibitem{HuWang}
J.~Hu and B.-L. Wang.
\newblock Delocalized {C}hern character for stringy orbifold {K}-theory.
\newblock {\em Trans. Amer. Math. Soc.}, 365(12):6309--6341, 2013.

\bibitem{Kac-Origonal}
G.~I. Kac.
\newblock Group extensions which are ring groups.
\newblock {\em Mat. Sb. (N.S.)}, 76 (118):473--496, 1968.

\bibitem{Karpilovsky}
G.~Karpilovsky.
\newblock {\em The {S}chur multiplier}, volume~2 of {\em London Mathematical
  Society Monographs. New Series}.
\newblock The Clarendon Press Oxford University Press, New York, 1987.

\bibitem{Masuoka-survey}
A.~Masuoka.
\newblock Hopf algebra extensions and cohomology.
\newblock In {\em New directions in {H}opf algebras}, volume~43 of {\em Math.
  Sci. Res. Inst. Publ.}, pages 167--209. Cambridge Univ. Press, Cambridge,
  2002.

\bibitem{Masuoka-Adv}
A.~Masuoka.
\newblock Cohomology and coquasi-bialgebra extensions associated to a matched
  pair of bialgebras.
\newblock {\em Adv. Math.}, 173(2):262--315, 2003.

\bibitem{BM-cohomology}
C.~C. Moore.
\newblock Group extensions and cohomology for locally compact groups. {III}.
\newblock {\em Trans. Amer. Math. Soc.}, 221(1):1--33, 1976.

\bibitem{Ost}
V.~Ostrik.
\newblock Module categories, weak {H}opf algebras and modular invariants.
\newblock {\em Transform. Groups}, 8(2):177--206, 2003.

\bibitem{Turaev-Book}
V.~G. Turaev.
\newblock {\em Quantum invariants of knots and 3-manifolds}, volume~18 of {\em
  de Gruyter Studies in Mathematics}.
\newblock Walter de Gruyter \& Co., Berlin, revised edition, 2010.

\bibitem{Willerton}
S.~Willerton.
\newblock The twisted {D}rinfeld double of a finite group via gerbes and finite
  groupoids.
\newblock {\em Algebr. Geom. Topol.}, 8(3):1419--1457, 2008.

\bibitem{Wither}
S.~J. Witherspoon.
\newblock Products in {H}ochschild cohomology and {G}rothendieck rings of group
  crossed products.
\newblock {\em Adv. Math.}, 185(1):136--158, 2004.

\end{thebibliography}

\end{document}